\documentclass[12pt,reqno]{amsart}
\usepackage{amssymb,amsmath}
\usepackage[all]{xy}
\usepackage[headings]{fullpage}
\usepackage{times}

\numberwithin{equation}{subsection}

\swapnumbers
\theoremstyle{plain}
\newtheorem{thm}[subsection]{Theorem}
\newtheorem{thmss}[subsubsection]{Theorem}
\newtheorem{prop}[subsubsection]{Proposition}
\newtheorem{props}[subsection]{Proposition}
\newtheorem{lemma}[subsubsection]{Lemma}

\newtheorem{corss}[subsubsection]{Corollary}

\theoremstyle{definition}
\newtheorem{defn}[subsubsection]{Definition}

\theoremstyle{remark}
\newtheorem{rem}[subsubsection]{Remark}






\usepackage[OT2,T1]{fontenc}
\DeclareSymbolFont{cyrletters}{OT2}{wncyr}{m}{n}
\DeclareMathSymbol{\sha}{\mathalpha}{cyrletters}{"58}

\newcommand{\CC}{\mathcal{C}}

\newcommand{\DD}{\mathcal{D}}

\renewcommand{\SS}{\mathcal{S}}

\newcommand{\XX}{\mathcal{X}}

\newcommand{\WW}{{\mathcal{W}}}

\newcommand{\EE}{\mathcal{E}}

\newcommand{\OO}{\mathcal{O}}

\newcommand{\F}{\mathbb{F}}
\newcommand{\Fp}{{\mathbb{F}_p}}
\newcommand{\Fptimes}{{\mathbb{F}_p^\times}}
\newcommand{\Fq}{{\mathbb{F}_q}}

\newcommand{\Fpbar}{{\overline{\mathbb{F}}_p}}
\newcommand{\Fpbartimes}{{\overline{\mathbb{F}}_p^\times}}

\newcommand{\ratto}{{\dashrightarrow}}

\newcommand{\Qp}{{\mathbb{Q}_p}}
\newcommand{\Qpbar}{{\overline{\mathbb{Q}}_p}}
\newcommand{\Zp}{{\mathbb{Z}_p}}

\newcommand{\Z}{\mathbb{Z}}

\renewcommand{\P}{\mathbb{P}}


\newcommand{\f}{\mathfrak{f}}

\newcommand{\p}{\mathfrak{p}}

\newcommand{\<}{\langle}
\renewcommand{\>}{\rangle}
\newcommand{\into}{\hookrightarrow}

\newcommand{\isoto}{\,\tilde{\to}\,}
\newcommand{\longisoto}{\,\tilde{\longrightarrow}\,}
\newcommand{\tensor}{\otimes}

\newcommand{\sdp}{{\rtimes}}


\newcommand{\G}{\mathbb{G}}

\newcommand{\labeledto}[1]{\overset{#1}{\to}}

\newcommand{\pfrac}[2]{\genfrac{(}{)}{}{}{#1}{#2}}

\DeclareMathOperator{\inv}{inv}

\DeclareMathOperator{\ord}{ord}

\DeclareMathOperator{\Hom}{Hom}
\DeclareMathOperator{\aut}{Aut}
\DeclareMathOperator{\Pic}{Pic}
\DeclareMathOperator{\divcorr}{DivCorr}

\DeclareMathOperator{\NS}{NS}

\DeclareMathOperator{\Br}{Br}

\DeclareMathOperator{\lcm}{lcm}
\DeclareMathOperator{\gal}{Gal}
\DeclareMathOperator{\spec}{Spec}

\DeclareMathOperator{\en}{End}

\DeclareMathOperator{\disc}{Disc}

\DeclareMathOperator{\Sel}{Sel}
\DeclareMathOperator{\Fr}{Fr}

\DeclareMathOperator{\len}{Len}

\def\clap#1{\hbox to 0pt{\hss#1\hss}}

\begin{document}
\title{Explicit points on the Legendre curve III}

\author{Douglas Ulmer}
\address{School of Mathematics \\ Georgia Institute of Technology
  \\ Atlanta, GA 30332}
\email{ulmer@math.gatech.edu}

\date{\today}

\subjclass[2010]{Primary 11G05, 14G05;
Secondary 11G40, 14K15}

\begin{abstract}
  We continue our study of the Legendre elliptic curve
  $y^2=x(x+1)(x+t)$ over function fields $K_d=\Fp(\mu_d,t^{1/d})$.
  When $d=p^f+1$, we have previously exhibited explicit points
  generating a subgroup $V_d\subset E(K_d)$ of rank $d-2$ and of
  finite, $p$-power index.  We also proved the finiteness of
  $\sha(E/K_d)$ and a class number formula:
  $[E(K_d):V_d]^2=|\sha(E/K_d)|$.  In this paper, we compute
  $E(K_d)/V_d$ and $\sha(E/K_d)$ explicitly as modules over
  $\Zp[\gal(K_d/\Fp(t))]$.
\end{abstract}

\maketitle

\section{Introduction}\label{s:intro}
Let $p$ be an odd prime number, $\Fp$ the field of $p$ elements, and
$K=\Fp(t)$ the rational function field over $\Fp$.  Let $E$ be the
elliptic curve over $K$ defined by $y^2=x(x+1)(x+t)$.  In
\cite{Ulmer14a} we studied the arithmetic of $E$ over the extension
fields $K_d=\Fp(\mu_d,t^{1/d})$ for integers $d$ not divisible by $p$.
In particular, when $d=p^f+1$ we exhibited explicit points generating
a subgroup $V_d\subset E(K_d)$ of rank $d-2$ and finite $p$-power
index.  Moreover, we showed that the Tate-Shafarevich group
$\sha(E/K_d)$ is finite and its order satisfies
$|\sha(E/K_d)|=[E(K_d):V_d]^2$.  Some of these results were
generalized to other values of $d$ in \cite{ConceicaoHallUlmer14}.

Our goal in this paper is to study the quotient group $E(K_d)/V_d$ and
the Tate-Shafarevich group $\sha(E/K_d)$ as modules over the group
ring $\Zp[\gal(K_d/K)]$.  In fact, we will completely determine both
modules in terms of combinatorial data coming from the action of the
cyclic group $\<p\>\subset(\Z/d\Z)^\times$ on the set $\Z/d\Z$.
Stating the most precise results requires some preliminaries which are
given in the next section, so in this introduction we state only the
main qualitative results.

\begin{thm}\label{thm:main}
  Let $p$ be an odd prime number and let $d=p^f+1$.  Let $K=\Fp(t)$,
  $K_d=\Fp(\mu_d,u)$ where $u^d=t$, and $G=\gal(K_d/K)$.  Let $E$ be
  the elliptic curve over $K$ defined by $y^2=x(x+1)(x+t)$.  Let $V_d$
  be the subgroup of $E(K_d)$ generated by the point
  $P=(u,u(u+1)^{d/2})$ and its conjugates by $G$.  Let $\sha(E/K_d)$
  be the Tate-Shafarevich group of $E$ over $K_d$.  Then $E(K_d)/V_d$
  and $\sha(E/K_d)$ are finite abelian $p$-groups with the following
  properties:
\begin{enumerate}
\item $E(K_d)/V_d$ and $\sha(E/K_d)$ are trivial if and only if
  $f\le2$.
\item The exponent of the group $E(K_d)/V_d$ is
  $p^{\lfloor(f-1)/2\rfloor}$.  The exponent of the group $\sha(E/K_d)$
    is $p^{\lfloor f/3\rfloor}$.  Here $\lfloor x\rfloor$ is the
    greatest integer $\le x$.
\item $\left(E(K_d)/V_d\right)^2$ and $\sha(E/K_d)$ are isomorphic as
  $\Zp[G]$-modules if and only if $f\le 4$.  If $f>4$, they are not
  isomorphic as abelian groups.
\item The Jordan-H\"older factors of $\sha(E/K_d)$ as
    $\Zp[G]$-module are the same as those of $E(K_d)/V_d$ with
    multiplicities doubled.
\item There is a polynomial $F_f(T)\in\Z[1/2][T]$ depending on $f$ but
  independent of $p$ such that 
$$|\sha(E/K_d)|=p^{F_f(p)}$$
for all $p>2$.
\end{enumerate}
\end{thm}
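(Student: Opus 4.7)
The plan is to deduce (5) from the explicit $\Zp[G]$-module description of $\sha(E/K_d)$ produced in the course of parts (1)--(4), combined with the class number formula $|\sha(E/K_d)|=[E(K_d):V_d]^2$ recalled in the introduction. It thus suffices to verify that $\log_p[E(K_d):V_d]$ equals half of a polynomial in $p$ with coefficients in $\Z[1/2]$ that depends only on $f$.

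First I would record the Jordan--H\"older content of $E(K_d)/V_d$ in the form provided by the structural results: simple constituents are indexed by orbits of the cyclic group $\<p\>\subset(\Z/d\Z)^\times$ acting on $\Z/d\Z$, the simple module attached to an orbit $O$ has $\Fp$-dimension equal to $|O|$, and each such simple module appears with some multiplicity $m_f(O)\in\Z_{\ge 0}$ determined by the combinatorial shape of $O$ (size, parity, and base-$p$ digit pattern of a representative) but independent of $p$ itself. The exponent we need is then
\[
\log_p[E(K_d):V_d]=\sum_O |O|\cdot m_f(O).
\]

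Next I would show the right-hand side is polynomial in $p$. Since $d=p^f+1$, every orbit size divides $2f$, and for each divisor $e$ of $2f$ the number of $p^e$-fixed points in $\Z/d\Z$ equals $\gcd(p^e-1,\,p^f+1)$, which admits a closed form (essentially $p^{e/2}+1$ or $2$, depending on the parities of $e$ and $f/e$). M\"obius inversion on the divisor lattice of $2f$ converts these fixed-point counts into orbit counts that are $\Z[1/2]$-valued polynomials in $p$. Grouping orbits by combinatorial shape and weighting by the $p$-independent multiplicities $m_f(O)$ collapses the sum into a single polynomial $F_f(T)\in\Z[1/2][T]$ evaluated at $p$.

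The main obstacle will be the bookkeeping: organising the finitely many combinatorial shapes of orbits so that the description survives all primes $p>2$ simultaneously, and confirming that the only denominators introduced are powers of $2$ (coming from the unique fixed orbit $\{d/2\}$, which exists because $d$ is even whenever $p$ is odd, and from parity corrections in the M\"obius inversion step). Once those corrections are accounted for, the polynomial $F_f$ is read off as the final expression, and part (5) follows with no further input.
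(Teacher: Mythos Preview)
Your reduction to showing that $\log_p[E(K_d):V_d]$ is polynomial in $p$ is fine, and you are right that the contribution of each orbit $o$ is $|o|\cdot m_f(o)$ with $m_f(o)=(f-ht(o))/2$ depending only on the word $w(o)$. The gap is in the counting step. M\"obius inversion over the divisor lattice of $2f$ gives you, for each divisor $e$ of $2f$, the number of orbits of size $e$ as a polynomial in $p$. But $m_f(o)$ is \emph{not} a function of $|o|$: two orbits of the same size can have different words, hence different heights and different multiplicities. So a polynomial count of orbits by size does not let you evaluate $\sum_o|o|\,m_f(o)$. Your phrase ``grouping orbits by combinatorial shape'' names exactly the right refinement, but the method you describe (fixed points and M\"obius) does not produce counts at that finer level, and nothing in the proposal fills this in.

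The paper's argument avoids orbit counting altogether. It rewrites the sum over orbits as a sum over elements $i\in\Z/d\Z\setminus\{0,d/2\}$, observes that the per-element contribution depends only on the length-$f$ ``pattern'' of $i$ (a string in $\{u,l\}^f$ recording whether each $p^ji$ lies in the lower or upper half of $\Z/d\Z$), and then counts elements with a prescribed pattern directly from their base-$p$ digits: if the pattern has $k$ maximal runs, exactly $\bigl(\tfrac{p-1}{2}\bigr)^k\bigl(\tfrac{p+1}{2}\bigr)^{f-k}$ elements have it (Lemma~\ref{lemma:counting}). Since the set of length-$f$ patterns is a fixed finite set independent of $p$, and each count is a polynomial in $p$ with coefficients in $\Z[1/2]$, the total is such a polynomial. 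Note in particular that the denominators of $2$ come from these digit-threshold counts $(p\pm1)/2$, not from the singleton orbits $\{0\}$ and $\{d/2\}$ as you conjecture.
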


Part (4) of the theorem may be viewed as an analogue of the Gras
conjecture, see \cite{Gras77} and \cite{MazurWiles84}. 

To my knowledge, the phenomenon of ``interpolation in $p$'' in part
(5) has not been observed before.  In fact, even more is true, namely
that all of the invariants of $\sha(E/K_d)$ and $E(K_d)/V_d$ as
abelian $p$-groups (i.e., the order of their $p^a$-torsion subgroups
for all $a$) are described by polynomials independent of $p$.

Results on the exact structure of $E(K_d)/V_d$ and $\sha(E/K_d)$ as
$\Zp[G]$-modules will be stated in Section~\ref{s:refined-results}
after some preliminaries in Section~\ref{s:orbits}.

In fact, we will prove results on the discriminant of the ``new part''
of $E(K_d)$ with its height pairing and on the $\Zp[G]$-module
structure of the ``new part'' of $\sha(E/K_d)$ for any $d$ such that
$p$ is balanced modulo $d$ in the sense of
\cite[Def.~2.1]{ConceicaoHallUlmer14}.  (This is the situation in
which there are points on $E(K_d)$ not coming from $E(K_e)$ for $e$ a
proper divisor of $d$.)  In cases where we have explicit points
(namely for $d=p^f+1$ as in \cite{Ulmer14a} or $d=2(p^f-1)$ as in
\cite{ConceicaoHallUlmer14}) we obtain good control on $E(K_d)/V_d$ as
well.  Some of our results apply to other curves and their Jacobians,
and for $p=2$.  See Theorems~\ref{thm:disc}, \ref{thm:index}, and
\ref{thm:sha} for the main refined results.

The two key ideas that afford such strong control on Mordell-Weil and
Tate-Shafarevich groups are: (i) that the N\'eron model of $E$ over
$\P^1_{/\Fp(\mu_d)}$ is dominated by a product of curves; and (ii)
ideas of Shioda and Dummigan which allow us to use crystalline
cohomology to compute Tate cycles and Brauer groups for products of
curves.  Similar ideas were used by Dummigan in \cite{Dummigan95} and
\cite{Dummigan99} to compute the discriminant of the Mordell-Weil
lattice and the structure of the Tate-Shafarevich group for a constant
supersingular elliptic curve over the function field of a Hermitian
curve.  In our case, the group of symmetries (essentially $G$ above)
is much smaller, the representation theory is much simpler, and as a
result we are able to boil the combinatorics down to very explicit
statements.

Here is an outline of the rest of the paper: In
Section~\ref{s:orbits}, we consider the orbits of $\<
p\>\subset(\Z/d\Z)^\times$ acting on $\Z/d\Z$.  These orbits index
certain $\Zp[G]$-modules which we use to decompose and describe
$E(K_d)$ and $\sha(E/K_d)$.  In Section~\ref{s:refined-results} we
state the more precise results on $E(K_d)$ and $\sha(E/K_d)$ alluded to
above.  In Section~\ref{s:DPC} we work out the geometry relating the
N\'eron model of $E$ to a product of curves (which in fact are Fermat
quotient curves) and the relations between the Mordell-Weil and
Tate-Shafarevich groups of $E$ and the N\'eron-Severi and Brauer
groups of the product of curves.  In
Section~\ref{s:arithmetic-of-CxD}, we work out the N\'eron-Severi
group and the $p$-part of the Brauer group of a general product of
curves in terms of crystalline cohomology.  That this is possible (in
the context of supersingular surfaces) was noted by Shioda in
\cite{Shioda91} and developed more fully by Dummigan in
\cite{Dummigan95}.  We use a somewhat different method than Dummigan
did, yielding more general results, although his results would suffice
for our application to the Legendre curve.  In Section~\ref{s:H1(C)}
we collect results on the cohomology of the curves appearing in the
product mentioned above. These results give the raw material for
Section~\ref{s:exercises}, where we carry out the $p$-adic exercises
needed to compute $E(K_d)$ and $\sha(E/K_d)$.  In
Section~\ref{s:proofs} we put all the pieces together and prove the
main results. Finally, Section~\ref{s:complements} contains various
generalizations and complements.

It is a pleasure to thank the anonymous referee for a very careful
reading of the paper and several valuable suggestions.

\section{Orbits, Invariants, Representations}\label{s:orbits}

Throughout this section, $p$ is an arbitrary prime number and $d$ is a
positive integer not divisible by $p$.  We write $(\Z/d\Z)^\times$ for
the multiplicative group modulo $d$ and $\<p\>$ for the cyclic
subgroup generated by $p$.

\subsection{Orbits}
Consider the action of $(\Z/d\Z)^\times$ on the set $\Z/d\Z$ by
multiplication.  By restriction, the subgroup $\<p\>$
acts on $\Z/d\Z$.  We write $\tilde O=\tilde O_{d,p}$ for the set of orbits.  Thus,
if $o\in\tilde O$ and $i\in o\subset\Z/d\Z$, then
$o=\{i,pi,p^2i,\dots\}$.  

Clearly the orbit through $0\in\Z/d\Z$ is a singleton $\{0\}$.  If $d$
is even (and therefore $p$ is odd), then the orbit through $d/2$ is
also a singleton because $p(d/2)=(d/2)$ in $\Z/d\Z$.  For reasons
which will become apparent later, we will usually exclude these two
orbits, and we define
$$O=O_{d,p}=\begin{cases}
\tilde O\setminus\left\{\{0\}\right\}&\text{if $d$ is odd}\\
\tilde O\setminus\left\{\{0\},\{d/2\}\right\}&\text{if $d$ is even.}
\end{cases}$$

Note that if $o\in\tilde O$, then $\gcd(i,d)$ is the same for all
$i\in o$, and we write $\gcd(o,d)$ for this common value.  It will
sometimes be convenient to consider only orbits with $\gcd(o,d)=1$
(which one might call ``new'' orbits), so we define
$$O'=O'_{d,p}=\left\{o\in O|\gcd(o,d)=1\right\}.$$
Note that $O'_{d,p}$ is just the set of cosets of $\<p\>$ in
$(\Z/d\Z)^\times$.  Note also that the set of orbits $o\in O$ with
$\gcd(o,d)=e$ for a fixed $e<d/2$ is in bijection with $O'_{d/e,p}$.

\subsection{Balanced orbits}\label{ss:balanced}
From here through the end of Subsection~\ref{ss:computing-invariants}
we assume that $d>2$ so that $O_{d,p}$ is not empty.

As in \cite{ConceicaoHallUlmer14}, we divide $(\Z/d\Z)^\times$ into two
subsets $A$ and $B$ where $A$ (resp.~$B$) consists of those classes
with least residue in the interval $(0,d/2)$ (resp.~in $(d/2,d)$).

We say that an orbit $o$ is \emph{balanced} if we have $|o\cap
A|=|o\cap B|$, and we say $d$ is \emph{balanced modulo $p$} if every
orbit $o\in O'_{d,p}$ is balanced. For example, by \cite[5.4,
5.5]{ConceicaoHallUlmer14}, $d$ is balanced modulo $p$ if $d$ divides
$p^f+1$ or if $d$ divides $2(p^f-1)$ and the ratio $2(p^f-1)/d$ is
odd.

\subsection{Invariants of orbits}\label{ss:invs-of-orbits}
Associated to each orbit $o$ we form a word on the two letter alphabet
$\{u,l\}$ ($u$ for upper and $l$ for lower) as follows: Choose a base
point $i$ so that the orbit $o=\{i,pi, p^2i,\dots,p^{|o|-1}i\}$.  The
associated word $w=w_1\cdots w_{|o|}$ is defined by
$$w_j=\begin{cases}
l&\text{if $-p^{j-1}i\in A$}\\
u&\text{if $-p^{j-1}i\in B$}.
\end{cases}$$
(The reason for the minus signs is explained in
Remark~\ref{rem:u-l-motivation}.)  Thus, for example, if $p=3$ and
$d=28$, the word associated to the orbit $\{6,18,26,22,10,2\}$ with
base point $6$ is $ullluu$

Note that $w$ depends on the choice of $i\in o$.  Changing the choice
of $i$ changes $w$ by a cyclic permutation of the letters.

Given a word $w=w_1\cdots w_{|o|}$, we define a sequence of integers
$a_j$ by $a_0=0$ and 
$$a_j=a_{j-1}+
\begin{cases}
1&\text{if $w_j=u$}\\
-1&\text{if $w_j=l$}.
\end{cases}$$
(So the word $w$ is viewed as a sequence of instructions to go up or
down.)

If $o$ is balanced, then the word $w$ associated to $o$ has as many
$u$'s as $l$'s and $a_{|o|}=0$.

\begin{defn}\label{def:base-pts}
We say the base point $i$ is \emph{good} if $a_j\ge0$
for $0\le j\le|o|$.  It is easy to see that every $o$ has a good base
point. The \emph{standard base point} for an orbit $o$ is the good base
point with smallest least positive residue.
\end{defn}

So for example, if $p=3$, $d=364$, and $o$ is the orbit $\{7, 21, 63,
189, 203, 245\}$, then there is a unique good base point, namely $7$,
with associated word $uuulll$.  On the other hand, if $o$ is the orbit
$\{37, 111, 333, 271, 85, 255\}$ then the good base points are $37$
(with word $uullul$) and $85$ (with word $uluull$), and the standard
base point is $37$.  From now on, given an orbit we choose the
standard base point and form the word associated to that base point.
This yields a well-defined function from orbits to words.  (It will be
essential below to choose a good base point, but which good base point
is chosen is of no import.  We introduce the notion of standard base
point simply for convenience.)

Now suppose that $w$ is the word associated to a balanced orbit $o$.
Then the first letter of $w$ must be $u$ and the last must be $l$, so
we can write $w$ in exponential form
$$w=u^{e_1}l^{e_2}\cdots l^{e_{2k}}$$ 
where each $e_j>0$.

\subsection{The complementary case}
Suppose that $d>2$ and $d$ divides $p^f+1$ for some $f$ so that
$-1\in\<p\>$.  If $i\in A$, then $p^fi\in B$ and conversely.  It
follows that if $o\in O_{d,p}$ and $w$ is the associated word, then
the second half of $w$ is the ``complement'' of the first half, i.e.,
each $u$ is replaced with an $l$ and each $l$ is replaced with a $u$.
More formally, if $w=w_1w_2\dots w_{|o|}$, then
$\{w_j,w_{|o|/2+j}\}=\{u,l\}$ for all $1\le j\le |o|/2$.

A similar discussion applies when $d$ divides $2(p^f-1)$ with an odd
quotient and $o$ is an orbit with $\gcd(o,d)$ odd.  Indeed, in this
case $p^f\equiv1+d/2\pmod d$ and $p^f$ is an element of order 2 in
$(\Z/d\Z)^\times$ which exchanges $A$ and $B$.  Thus if $o$ is an
orbit with $\gcd(o,d)$ odd, then the associated word has second half
equal to the complement of the first half.

These examples motivate the following definition.

\begin{defn} We say an orbit $o$ is \emph{complementary} if it is
  balanced and the associated word $w=w_1\dots w_{|o|}$ satisfies
  $\{w_j,w_{|o|/2+j}\}=\{u,l\}$ for $1\le j\le |o|/2$.
\end{defn}

If $o$ is complementary and we write the associated word in
exponential form $w=u^{e_1}l^{e_2}\cdots l^{e_{2k}}$, then
$e_{k+j}=e_j$.  Since the last letter must be $l$, the last letter of
the first half must be $u$ and so $k$ must be odd.

\subsection{Comparison with Dummigan's string diagrams}
In \cite{Dummigan95}, Dummigan introduces certain words on the
alphabet $\{X,O\}$ which he calls string diagrams.  He works entirely
in the context where $d=p^f+1$ (so all orbits are complementary), and
his diagrams are invariants of orbits closely related to our words
$w(o)$.  Indeed, given an orbit $o$ with base point $i$ and word
$w(o)$, the associated string diagram is $s=s_1\cdots s_f$ where
$$s_j=\begin{cases}O&\text{if $w_j=w_{j+1}$}\\
X&\text{if $w_j\neq w_{j+1}$.}
\end{cases}$$
He also defines circle diagrams by taking into account the rotations
induced by a change of base point.  It is easy to see that the map from words
to string diagrams is 2-to-1 and that we could phrase our arguments in
terms of Dummigan's string and circle diagrams.  However, for most of
our purposes, words as we have defined them are more convenient.

\subsection{More invariants}\label{ss:more-invs}
We continue to assume that $d>2$.  Let $o$ be a balanced orbit with
associated word $w$ written in exponential form as $w=u^{e_1}\cdots
l^{e_{2k}}$.  The exponents $e_1,\dots,e_{2k}$ give one invariant of
the orbit $o$.  

A second invariant of the orbit $o$ is its \emph{height}, defined as
$$ht(o)=\max\{e_1,e_1-e_2+e_3,\dots,e_1-e_2+e_3-\cdots+e_{2k-1}\}.$$ 
We may also describe the height as the maximum value of the function
$i\mapsto a_i$ defined above.  Note that in the complementary case, we
have $ht(o)=e_1-e_2+\cdots+e_k$.

We will define a third invariant in terms of invariant factors of
certain bi-diagonal matrices.  To that end, consider the integer,
$k\times k$, bi-diagonal matrix
$$B=B(e_1,\dots,e_{2k-1}):=\begin{pmatrix}
p^{e_1}&-p^{e_2}&0&\dots&\dots\\
0&p^{e_3}&-p^{e_4}&\dots&\dots\\
0&0&p^{e_5}&\dots&\dots\\
\vdots&\vdots&\vdots&\ddots\\
\vdots&\vdots&\vdots&&p^{e_{2k-1}}
\end{pmatrix}
$$
and define $d_1\le d_2\le\dots\le d_k$ as the exponents of the invariant factors
of $B$, so that $B$ can be transformed into
$$A=\begin{pmatrix}
p^{d_1}&0&0&\dots&\dots\\
0&p^{d_2}&0&\dots&\dots\\
0&0&p^{d_3}&\dots&\dots\\
\vdots&\vdots&\vdots&\ddots\\
\vdots&\vdots&\vdots&&p^{d_{k}}
\end{pmatrix}
$$
by a series of integer row and column operations.  We will discuss how
to compute these invariants in the next subsection.

\subsection{Computing invariant factors}\label{ss:computing-invariants}
We continue with the assumptions of the preceding subsection (so $o$
is a balanced orbit) and we give two algorithms for computing the
invariants $d_1,\dots,d_k$ attached to $o$.  This subsection is not
needed for the statements of the main results in
Section~\ref{s:refined-results}, so it may be skipped on a first
reading.

Roughly speaking, the first algorithm picks out $d_1$ and continues
inductively, while the second picks out $d_k$ and continues
inductively.  The second is more complicated than the first, but it
gives valuable information in the complementary case, see
Lemma~\ref{lemma:inv-factors3} and Remark~\ref{rem:equivalent-Bs}
below.  Both algorithms are based on the well-known fact that the
$i$-th invariant factor of a matrix $B$ is
$$\gcd(\text{$i\times i$ minors of $B$})/
\gcd(\text{$(i-1)\times (i-1)$ minors of $B$}).$$

To describe the results, we introduce the following notation: For
$1\le i\le j\le2k-1$, let $e_{ij}=e_i-e_{i+1}+e_{i+2}-\cdots\pm e_j$.
Also, we say that two matrices are \emph{equivalent} (notation:
$\sim$) if one can be transformed to the other by a series of integer
row and column operations.  

\begin{lemma}\label{lemma:inv-factors1}
  Assume $k>1$, let $e_1,\dots,e_{2k-1}$ be positive integers, and let
  $d_1,\dots,d_k$ be the integers attached as above to
  $B(e_1,\dots,e_{2k-1})$.  We have $d_1=\min\{e_1,\dots,e_{2k-1}\}$.
  Choose $i$ such that $d_1=e_i$ and define
$$B'=
\begin{cases}
  B(e_3,\dots,e_{2k-1})&\text{if $i=1$}\\
  B(e_1,\dots,e_{i-2},e_{i-1,i+1},e_{i+2},\dots,e_{2k-1})&\text{if $1<i<2k-1$}\\
  B(e_1,\dots,e_{2k-3})&\text{if $i=2k-1$.}
\end{cases}$$
Then $B(e_1,\dots,e_{2k-1})$ is equivalent to $(p^{d_1})\oplus B'$.
\end{lemma}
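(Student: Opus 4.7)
The plan is to establish both claims by direct row/column operations, using the fact that every nonzero entry of $B$ is $\pm p^{e_j}$ for some $j$.

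For the first claim, I would invoke the general identity stated just before the lemma: the first invariant factor is the gcd of the entries of $B$. Since every nonzero entry of $B$ is $\pm p^{e_j}$ for some $j \in \{1, \dots, 2k-1\}$, this gcd equals $p^{\min\{e_1, \dots, e_{2k-1}\}}$. Hence $d_1 = \min\{e_1, \dots, e_{2k-1}\} = e_i$.

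For the equivalence, I would perform elementary integer row and column operations to isolate a $1 \times 1$ block with entry $\pm p^{e_i}$. The argument splits into three cases according to the position of $e_i$:

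\emph{Case $i=1$.} The minimum sits at position $(1,1)$; since $e_2 \ge e_1$, adding $p^{e_2 - e_1}$ times column $1$ to column $2$ kills the $(1,2)$ entry. Row $1$ and column $1$ then split off, leaving $B(e_3, \dots, e_{2k-1})$.

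\emph{Case $i=2k-1$.} Symmetric: since $e_{2k-2} \ge e_{2k-1}$, add $p^{e_{2k-2}-e_{2k-1}}$ times row $k$ to row $k-1$ to kill the $(k-1,k)$ entry, then split off row $k$ and column $k$.

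\emph{Case $1 < i < 2k-1$.} Here I would treat odd and even $i$ in parallel. If $i = 2j-1$, so $p^{e_i}$ sits at the diagonal position $(j,j)$, first add $p^{e_{2j}-e_{2j-1}}$ times column $j$ to column $j+1$ to kill the $(j,j+1)$ entry; then add $p^{e_{2j-2}-e_{2j-1}}$ times row $j$ (which at this stage has only the entry $p^{e_{2j-1}}$) to row $j-1$ to kill the $(j-1,j)$ entry. Tracking the side effect on the $(j-1,j+1)$ entry, a short computation shows that it becomes $-p^{e_{2j-2}-e_{2j-1}+e_{2j}} = -p^{e_{i-1,i+1}}$, and all other entries outside row/column $j$ are unchanged. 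If $i = 2j$, so $-p^{e_i}$ sits at the superdiagonal position $(j,j+1)$, I would symmetrically add $p^{e_{2j-1}-e_{2j}}$ times column $j+1$ to column $j$ to kill $(j,j)$, then add $p^{e_{2j+1}-e_{2j}}$ times row $j$ to row $j+1$ to kill $(j+1,j+1)$; the side effect lands in position $(j+1,j)$ as $p^{e_{2j-1}-e_{2j}+e_{2j+1}} = p^{e_{i-1,i+1}}$. In either sub-case, deleting the isolated row and column and reindexing produces precisely $B(e_1, \dots, e_{i-2}, e_{i-1,i+1}, e_{i+2}, \dots, e_{2k-1})$, as claimed.

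The main obstacle is the bookkeeping in the middle case: one must verify that the two operations commute correctly (they do, because after the first operation row $j$ or column $j+1$ is reduced to a single nonzero entry), that all exponents appearing as multipliers are non-negative (guaranteed by minimality of $e_i$), and that the modified entry at the ``opposite corner'' of the cleared $2\times 2$ block is exactly $\pm p^{e_{i-1,i+1}}$. Once this computation is done, the decomposition $B \sim (p^{d_1}) \oplus B'$ follows, and signs are irrelevant because integer equivalence of matrices is insensitive to units.
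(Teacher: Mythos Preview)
Your proposal is correct and follows essentially the same approach as the paper: both compute $d_1$ from the gcd description of the first invariant factor and then perform explicit elementary row/column operations to split off a $1\times1$ block, treating the endpoint cases $i=1,\,2k-1$ separately and handling the interior case by clearing the two entries adjacent to the pivot while tracking the single side-effect entry $\pm p^{e_{i-1,i+1}}$. The only cosmetic difference is that in the interior odd case you do the column operation first and the row operation second, whereas the paper does row then column; since after the first step the pivot row (or column) has only one nonzero entry, the order is immaterial and the resulting block is the same.
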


Note that we make no assumptions on the $e_i$ other than
positivity.  The result can thus be applied inductively to $B'$,
and thus gives an algorithm for computing all of the $d_j$.
For example, if $(e_1,\dots,e_{2k})=(4,1,3,5,4,3,5,4,2,1,2,6)$, then
the algorithm proceeds as follows:
\begin{align*}
B(4,1,3,5,4,3,5,4,2,1,2)&\underset{(i=2)}{\sim} (p^1)\oplus B(6,5,4,3,5,4,2,1,2)\\
B(6,5,4,3,5,4,2,1,2)&\underset{(i=7)}{\sim} (p^1)\oplus B(6,5,4,3,5,4,3)\\
B(6,5,4,3,5,4,3)&\underset{(i=4)}{\sim} (p^3)\oplus B(6,5,6,4,3)\\
B(6,5,6,4,3)&\underset{(i=5)}{\sim} (p^3)\oplus B(6,5,6)\\
B(6,5,6)&\underset{(i=2)}{\sim} (p^5)\oplus B(7)
\end{align*}
so the invariants $d_j$ are $1,1,3,3,5,7$.

\begin{proof}[Proof of Lemma~\ref{lemma:inv-factors1}]
  That $d_1=\min\{e_1,\dots,e_{2k-1}\}$ is evident from the
  description of $d_1$ as $\gcd\{p^{e_1},\dots,p^{e_{2k-1}}\}$.

  Write $B$ for $B(e_1,\dots,e_{2k-1})$.  If $i=1$, then $p^{e_1}$
  divides $-p^{e_2}$, and a single column operation transforms $B$
  into $(p^{e_1})\oplus B(e_3,\dots,e_{2k-1})$.  This is the desired
  result.

  Similarly, if $i=2k-1$, then $p^{e_{2k-1}}$ divides $-p^{e_{2k-2}}$,
  and a single row operation transforms $B$ into
  $B(e_1,\dots,e_{2k-3})\oplus (p^{e_{2k-1}})$.  This is the desired
  result.

  Now consider the case where $1<i<2k-1$ and assume that $i$ is odd.
  Then a row operation followed by a column operation transforms the
  submatrix
$$\begin{pmatrix}
-p^{e_{i-1}}&0\\p^{e_i}&-p^{e_{i+1}}
\end{pmatrix}$$
of $B$ into
$$\begin{pmatrix}
0&-p^{e_{i-1,i+1}}\\p^{e_i}&0
\end{pmatrix}$$
and leaves the rest of $B$ unchanged.  Permuting rows and columns
yields 
$$(p^{e_i})\oplus
B(e_1,\dots,e_{i-2},e_{i-1,i+1},e_{i+2},\dots,e_{2k-1}).$$

The case where $1<i<2k-1$ and $i$ is even is similar: We first
transform the submatrix
$$\begin{pmatrix}
p^{e_{i-1}}&-p^{e_i}\\0&p^{e_{i+1}}
\end{pmatrix}$$
of $B$ into
$$\begin{pmatrix}
0&-p^{e_i}\\p^{e_{i-1,i+1}}&0
\end{pmatrix}$$ 
and then permute rows and columns and multiply row 1 (containing
$-p^{e_i}$) by $-1$ to arrive at
$$(p^{e_i})\oplus
B(e_1,\dots,e_{i-2},e_{i-1,i+1},e_{i+2},\dots,e_{2k-1}).$$
This completes the proof of the lemma.
\end{proof}

\begin{lemma}\label{lemma:inv-factors2}\mbox{}
  Assume $k>1$, let $e_1,\dots,e_{2k-1}$ be positive integers, and let
  $d_1,\dots,d_k$ be the integers attached as above to
  $B(e_1,\dots,e_{2k-1})$.  We have
$$d_k=\max\{e_{ij}|1\le i\le j\le 2k-1,\text{$i$ and $j$ odd}\}.$$  
Choose $i\le j$ odd such that $d_k=e_{ij}$.  Define a subset
$T\subset\{1,2,3\}$ and matrices $B_\alpha$ for $\alpha\in S$ as
follows:
\begin{itemize}
\item $1\in T$ if and only if $i>1$.  If $i>1$, let
  $B_1=B(e_1,\dots,e_{i-2})$.
\item $2\in T$ if and only if $i<j$.  If $i<j$, let
  $B_2=B(e_{i+1},\dots,e_{j-1})^t$ ($t=$ transpose).
\item $3\in T$ if and only if $j<2k-1$.  If $j<2k-1$, let
  $B_3=B(e_{j+2},\dots,e_{2k-1})$.
\end{itemize}
Let $B'=\oplus_{\alpha\in T}B_\alpha$.  Then $B(e_1,\dots,e_{2k-1})$
is equivalent to $(p^{d_k})\oplus B'$.
\end{lemma}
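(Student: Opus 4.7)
The plan is to prove the two claims of the lemma in sequence. For the formula for $d_k$, I will use the identity that the top invariant factor of a square integer matrix equals its determinant divided by the $\gcd$ of its $(k-1)\times(k-1)$ minors. Since $B$ is upper bi-diagonal, any nonzero term in the permutation expansion of a minor comes from a permutation $\sigma$ with $\sigma(r)\in\{r,r+1\}$; a direct case analysis shows that, for the minor obtained by deleting row $\alpha$ and column $\beta$, such a $\sigma$ exists only when $\alpha\ge\beta$, and in that case it is unique and produces a minor of $p$-adic valuation $\ord_p(\det B)-e_{2\beta-1,\,2\alpha-1}$. The minor vanishes when $\alpha<\beta$. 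Taking the $\gcd$ therefore amounts to maximizing $e_{2\beta-1,\,2\alpha-1}$ over $\beta\le\alpha$; setting $(i,j)=(2\beta-1,2\alpha-1)$ yields the claimed formula.

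For the equivalence $B\sim(p^{d_k})\oplus B'$, fix $(i,j)=(2a-1,2b-1)$ with $e_{ij}=d_k$. The key observation used uniformly throughout is that every exponent appearing in the forthcoming operations can be written as $e_{ij}-e_{i',j'}$ for some pair of odd indices $1\le i'\le j'\le 2k-1$, and is therefore nonnegative by the maximality of $e_{ij}$. I will carry out four rounds of integer row and column operations. First, a column round on column $a$: for $r=1,\dots,b-a$ add $p^{e_{ij}-e_{i+2r,\,j}}$ times column $a+r$ to column $a$; a telescoping cancellation along rows $a,a+1,\dots,b-1$ shows that column $a$ acquires exactly two nonzero entries, namely $-p^{e_{i-1}}$ at row $a-1$ (absent when $a=1$) and $p^{d_k}$ at row $b$. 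Next, a left-cleaning row round: for $s=1,\dots,b-a$ add $p^{e_{ij}-e_{i,\,j-2s}}$ times row $b-s$ to row $b$, successively killing its entries at columns $b,b-1,\dots,a+1$. Then a right-cleaning row round: for $s=1,\dots,k-b$ add $p^{e_{ij}-e_{i,\,j+2s}}$ times row $b+s$ to row $b$, killing its entries at columns $b+1,\dots,k$. Finally, if $a>1$, an upper-cleaning column round: for $s=1,\dots,a-1$ add $p^{e_{ij}-e_{i-2s,\,j}}$ times column $a-s$ to column $a$, killing column $a$'s entries at rows $a-1,a-2,\dots,1$.

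After all four rounds, column $a$ and row $b$ each contain only the single entry $p^{d_k}$, located at position $(b,a)$. Since every operation modifies only row $b$ or column $a$ (the source rows and columns being themselves unaltered), the $(k-1)\times(k-1)$ submatrix obtained by deleting row $b$ and column $a$ agrees with the corresponding submatrix of the original $B$. Inspection of the bi-diagonal pattern of $B$ shows that this submatrix splits as a block direct sum of three pieces: an upper-left $(a-1)\times(a-1)$ block equal to $B_1=B(e_1,\dots,e_{i-2})$; a middle $(b-a)\times(b-a)$ block on rows $a,\dots,b-1$ and columns $a+1,\dots,b$ that is lower bi-diagonal and, after multiplying each row by $-1$, coincides with $B_2=B(e_{i+1},\dots,e_{j-1})^t$; and a lower-right $(k-b)\times(k-b)$ block equal to $B_3=B(e_{j+2},\dots,e_{2k-1})$. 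A single row-and-column swap moving $p^{d_k}$ to position $(1,1)$ yields the asserted equivalence $B\sim(p^{d_k})\oplus B_1\oplus B_2\oplus B_3=(p^{d_k})\oplus B'$. The main obstacle is the exponent bookkeeping---verifying that each prescribed $p$-power is a nonnegative integer and that each operation has the claimed effect; this is handled uniformly by the observation that every exponent has the form $e_{ij}-e_{i',j'}$, together with a direct computation of the alternating sums produced by the telescoping cancellations.
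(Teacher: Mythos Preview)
Your proof is correct and follows essentially the same strategy as the paper's: compute $d_k$ via the determinant-over-gcd-of-minors formula (your permutation analysis of the bidiagonal minors is exactly the computation the paper carries out), and then use integer row/column operations, with all exponents certified nonnegative by the maximality of $e_{ij}$, to isolate the $p^{d_k}$ pivot.

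The only real difference is organizational. The paper first splits off $B_1$ and $B_3$ by killing the entries $-p^{e_{i-1}}$ and $-p^{e_{j+1}}$ (using the inequalities $e_{i-2s,i-1}\le0$ and $e_{j+1,j+2s}\le0$, which are equivalent to your $e_{ij}-e_{i-2s,j}\ge0$ and $e_{ij}-e_{i,j+2s}\ge0$), and then works inside the middle block $B(e_i,\dots,e_j)$ with a column sweep, a cyclic row permutation, and a row sweep. You instead create the pivot $p^{d_k}$ first and then clear its row and column in place, never leaving the original $k\times k$ matrix. Your order has the pleasant feature that every elementary operation targets only row $b$ or column $a$, so the complementary $(k-1)\times(k-1)$ submatrix is literally the corresponding submatrix of the original $B$, and the block decomposition into $B_1\oplus(-B_2)\oplus B_3$ can be read off directly. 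One tiny wording point: the ``source'' rows $a,\dots,b-1$ used in the left-cleaning round are not literally unaltered (their column-$a$ entries were zeroed in the column round), but since that alteration lives in column $a$ itself, it has no effect on either the row operations or the complementary submatrix, so your conclusion stands.
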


Note that since we always choose a good base point for an orbit, if
$B(e_1,\dots,e_{2k-1})$ is the matrix attached to a balanced orbit
$o$, then the invariant $d_k$ is equal to the height of $o$.  We have
not emphasized this in the statement of the lemma, because the top
invariant factor of a general bidiagonal matrix (e.g., the matrices
$B_\alpha$ with $\alpha\in T$) need not be of the form $e_{1j}$.

This lemma applies equally well to lower-triangular bidiagonal
matrices, so it gives another inductive algorithm for computing all of
the $d_j$.  For example, if
$$(e_1,\dots,e_{2k-1})=(4,1,3,5,4,3,5,4,2,1,2),$$ 
then (ignoring transposes) the algorithm proceeds as follows:
\begin{align*}
B(4,1,3,5,4,3,5,4,2,1,2)&\underset{(i,j)=(1,7)}{\sim} 
(p^7)\oplus B(1,3,5,4,3)\oplus B(2,1,2)\\
B(1,3,5,4,3)&\underset{(i,j)=(3,3)}{\sim} (p^5)\oplus B(1)\oplus B(3)\\
B(2,1,2)&\underset{(i,j)=(1,3)}{\sim} (p^3)\oplus B(1)
\end{align*}
so the invariants $d_j$ are $1,1,3,3,5,7$.

\begin{proof}[Proof of Lemma~\ref{lemma:inv-factors2}]
  We write $B$ for $B(e_1,\dots,e_{2k-1})$.  The value of $d_k$ can be
  seen from the description of the invariant factors of $B$ in terms
  of minors. Indeed, note that 
$$\det B=p^{e_1+e_3+\cdots+e_{2k-1}}.$$  
On the other hand, the non-zero $(k-1)\times(k-1)$ minors of $B$ are
of two types: Those obtained by deleting row and column $i$ are of the
form $\pm\det B/p^{e_{2i-1}}$, and those obtained by deleting row $i$
and column $j$ with $j<i$ are of the form
$$\pm p^{e_1+e_3+\cdots+e_{2j-3}}p^{e_{2j}+e_{2j+2}+\cdots +e_{2i-2}}
p^{e_{2i+1}+\cdots+e_{2k-1}}.$$ 
It follows that $d_k$ is the maximum of $e_{ij}$ where $i\le j$ and
$i$ and $j$ are odd.  This is the first claim in the statement of the
Lemma.

To obtain the asserted equivalence, choose $i\le j$ odd such that
$d_k=e_{ij}$.  If $i>1$, then the definition of $e_{ij}$ implies
the inequalities:
\begin{align*}
e_{i-2,j}\le e_{ij}&\implies e_{i-2,i-1}\le0\\
e_{i-4,j}\le e_{ij}&\implies e_{i-4,i-1}\le0\\
&\quad\vdots\\
e_{1,j}\le e_{ij}&\implies e_{1,i-1}\le0.
\end{align*}
It follows that we may eliminate the entry $-p^{e_{i-1}}$ from $B$ by
a series of column operations.  More precisely, $B$ is equivalent to
$B(e_1,\dots,e_{i-2})\oplus B(e_i,\dots,e_{2k-1})$.  

Similarly, if $j<2k-1$, we have a series of inequalities $e_{ij}\ge
e_{ij+2}$, \dots, $e_{ij}\ge e_{i,2k-1}$ and these imply that by a
series of row operations we may eliminate $-p^{e_{j+1}}$, i.e., $B$ is
equivalent to $B(e_1,\dots,e_j)\oplus B(e_{j+2},\dots,e_{2k-1})$.

If $i>1$ and $j<2k-1$, then we may perform both of the procedures
above, so that
$$B\sim B(e_1,\dots,e_{i-2})\oplus B(e_i,\dots,e_j)\oplus
B(e_{j+2},\dots,e_{2k-1}).$$

If $i=j$, then $B(e_i)=(p^{d_k})$ and we are done.  

It remains to prove that if $i<j$, then $B(e_i,\dots,e_j)$ is
equivalent to $(p^{d_k})\oplus B(e_{i+1},\dots,e_{j-1})^t$.  To see
this, we note that the definition of $e_{ij}$ implies that
$e_{i\ell}\ge0$ and $e_{\ell j}\le0$ for all even $\ell$ with
$i<\ell<j$.  Using these inequalities, we transform
$B(e_i,\dots,e_j)$ by column operations into
$$\begin{pmatrix}
0&-p^{e_{i+1}}&0&\dots&\dots\\
0&p^{e_{i+2}}&-p^{e_{i+3}}&\dots&\dots\\
\vdots&\vdots&\vdots&\ddots\\
p^{d_k}&0&0&\cdots&p^{e_{j}}
\end{pmatrix}$$
then by transposing rows into
$$\begin{pmatrix}
p^{d_k}&0&0&\cdots&p^{e_{j}}\\
0&-p^{e_{i+1}}&0&\dots&\dots\\
0&p^{e_{i+2}}&-p^{e_{i+3}}&\dots&\dots\\
\vdots&\vdots&\vdots&\ddots\\
0&0&\cdots&p^{e_{j-2}}&-p^{e_{j-1}}\\
\end{pmatrix}$$ 
and finally by row operations and sign changes into $(p^{d_k})\oplus
B(e_{i+1},\dots,e_{j-1})^t$.

This completes the proof of the lemma.
\end{proof}

\begin{lemma}\label{lemma:inv-factors3}
  If $o$ is complementary \textup{(}so that $k$ is odd and
  $e_{k+i}=e_i$ for $1\le i\le k$\textup{)}, then we have
  $d_k=e_{1k}$, the other $d_j$ come in pairs \textup{(}i.e.,
  $d_1=d_2$, $d_3=d_4$, \dots\textup{)}, and
$$d_{k-1}=d_{k-2}=
\max\{e_{ij}|2\le i\le j\le k-1,\text{$i$ and $j$ even}\}.$$
\end{lemma}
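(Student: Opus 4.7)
The plan is to derive all three assertions from a single application of Lemma~\ref{lemma:inv-factors2} to $B = B(e_1,\dots,e_{2k-1})$ at the pair $(i,j) = (1,k)$, followed by a second application to identify the top invariant factor of each resulting middle block.

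First I would verify that $(1,k)$ is an admissible choice for Lemma~\ref{lemma:inv-factors2}: since $k$ is odd in the complementary case, both $1$ and $k$ are odd. The remark following Lemma~\ref{lemma:inv-factors2} gives $d_k = ht(o)$, and Subsection~\ref{ss:more-invs} notes that in the complementary case $ht(o) = e_1 - e_2 + \cdots + e_k = e_{1k}$. Hence $d_k = e_{1k}$, which is the first assertion and simultaneously shows that $(1,k)$ realizes the maximum appearing in Lemma~\ref{lemma:inv-factors2}.

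Applying Lemma~\ref{lemma:inv-factors2} at $(1,k)$, I would note that $1 \notin T$ (since $i=1$) while $2,3 \in T$ (since $i<j$ and $j=k<2k-1$, using $k>1$). This produces
$$B \sim (p^{d_k}) \oplus B(e_2,\dots,e_{k-1})^t \oplus B(e_{k+2},\dots,e_{2k-1}).$$
The complementary hypothesis $e_{k+m}=e_m$ identifies $B(e_{k+2},\dots,e_{2k-1})$ with $B(e_2,\dots,e_{k-1})$, so the last two summands are a matrix and its transpose. Since a matrix and its transpose have the same invariant factors over a PID, the two blocks contribute identical lists of elementary divisors; sorting, this means each invariant $p^{d_j}$ with $j<k$ occurs with even multiplicity, giving $d_1=d_2$, $d_3=d_4$, $\dots$, $d_{k-2}=d_{k-1}$. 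This is the second assertion.

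Finally, I would apply Lemma~\ref{lemma:inv-factors2} a second time to $B(e_2,\dots,e_{k-1})$ to extract its top invariant, which by the pairing equals $d_{k-1}=d_{k-2}$. Writing the entries of this smaller matrix as $a_m := e_{m+1}$ for $1\le m \le k-2$, the lemma gives its top invariant as $\max\{a_{i'j'} : 1 \le i' \le j' \le k-2,\ i',j' \text{ odd}\}$; translating via $a_{i'j'} = e_{i'+1,j'+1}$ with $i'+1, j'+1$ running over even integers in $\{2,\dots,k-1\}$ yields precisely the stated formula. The main conceptual step in the entire argument is the observation that complementarity forces the two middle blocks in the decomposition to be transposes of one another; after that, everything is routine bookkeeping, and the degenerate cases $k=1$ (the pairing assertion is vacuous and no $d_{k-1}$ exists) and $k=3$ (where $B(e_2,\dots,e_{k-1}) = (p^{e_2})$ is $1\times1$) can be checked directly.
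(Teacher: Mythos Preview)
Your proof is correct and follows essentially the same route as the paper: one application of Lemma~\ref{lemma:inv-factors2} at $(i,j)=(1,k)$ to split off $p^{d_k}$ and obtain two blocks that, by complementarity, are a matrix and its transpose, followed by a second use of the top-invariant formula on $B(e_2,\dots,e_{k-1})$. Your write-up supplies more detail (the translation $a_m=e_{m+1}$, the degenerate cases $k=1,3$) but the argument is the same.
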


\begin{proof}[Proof of Lemma~\ref{lemma:inv-factors3}]
  It is easy to see that $i=1$, $j=k$ achieves the maximum $e_{ij}$,
  so we have $d_k=e_{1k}=ht(o)$.  One application of
  Lemma~\ref{lemma:inv-factors2} shows that $B(e_1,\dots,e_{2k-1})$
  is equivalent to
$$p^{d_k}\oplus B(e_2,\dots,e_{k-1})^t\oplus B(e_2,\dots,e_{k-1}).$$
Thus the invariant factors $d_1,\dots,d_{k-1}$ come in pairs.
Applying the recipe of Lemma~\ref{lemma:inv-factors2} for the top
invariant factor to $B(e_2,\dots,e_{k-1})$ gives the assertion on
$d_{k-1}$ and $d_{k-2}$.
\end{proof}

\begin{rem}\label{rem:equivalent-Bs}
  Suppose that $e_1,\dots,e_{2k}$ are the exponents of a word coming
  from a good base point (so $e_{1,j}\ge0$ for all $j$) and suppose
  that $e_{1,2j+1}$ is maximum among $e_{1,\ell}$.  Then the following
  four matrices and their transposes all have the same invariant
  factors: $B(e_1,\dots,e_{2k-1})$, $B(e_2,\dots,e_{2k})$,
  $B(e_{2j+2},\dots,e_{2k},e_1,\dots,e_{2j})$ and
  $B(e_{2j+3},\dots,e_{2k},e_1,\dots,e_{2j+1})$.  Indeed (ignoring
  transposes), the first step of the second algorithm above shows that
  each of these matrices is equivalent to
$$(p^{e_{1,2j+1}})\oplus B(e_2,\dots,e_{2j})\oplus B(e_{2j+3},\dots,e_{2k-1}).$$
\end{rem}

\subsection{Representations of $G$}\label{ss:reps}
Fix an algebraic closure $\Fpbar$ of $\Fp$, and view $\mu_d$
as a subgroup of $\Fpbartimes$.
Let $W(\Fpbar)$ be the Witt vectors with coefficients in $\Fpbar$ and
let $\chi:\mu_d\to W(\Fpbar)$ be the Teichm\"uller character, so that
$\chi(\zeta)\equiv\zeta\pmod p$ for all $\zeta\in \mu_d$.  
Identifying $W(\Fpbar)$ with a subring of $\Qpbar$, the
$\Qpbar$-valued character group $\hat \mu_d$ of $\mu_d$ can be
identified with $\Z/d\Z$ by associating $\chi^i$ with $i$.

The group $\<p\>\subset(\Z/d\Z)^\times$ acts on $\mu_d$ via
exponentiation.  This yields an action on $\hat \mu_d\cong\Z/d\Z$ under
which $p$ acts by multiplication by $p$.  It is thus natural to
consider the set $\tilde O$ of orbits of $\<p\>$ on $\Z/d\Z$.
If $i\in\Z/d\Z$ and $o$ is the orbit of $\<p\>$ through $i$, then the
values of $\chi^i$ lie in the Witt vectors $W(\F_{p^{|o|}})$ and the
values of  $\sum_{i\in o}\chi^i$ lie in $\Zp=W(\Fp)$.

Now fix a finite extension $\Fq$ of $\Fp(\mu_d)$ in $\Fpbar$ and
let $G_1=\gal(\Fq/\Fp)$.  The action of $G_1$ on $\mu_d$ factors
through the homomorphism $G_1\to\<p\>$ which sends $\Fr_p$, the
$p$-power Frobenius, to $p$.

Let $G$ be the semi-direct product $\mu_d\sdp G_1$.  There is a
canonical identification 
$$G\cong\gal(\Fq K_d/K)=\gal(\Fq(u)/\Fp(t)).$$ 

To avoid confusion between number rings and group rings, we write $H$
for $\mu_d$.  Let $\Zp[H]$ and $\Zp[G]$ be the group rings of $H$ and
$G$ with coefficients in $\Zp$.  We also write $\Gamma=\Zp[H]$ which
we view as a $\Zp[H]$-module in the obvious way.  Letting $G_1$ act
on $\Gamma$ through its action on $H$ makes $\Gamma$ into a
$\Zp[G]$-module.

\begin{prop}\label{prop:G-reps}
\mbox{}\begin{enumerate}
\item There is a canonical isomorphism of $\Zp[H]$-modules
$$\Gamma=\bigoplus_{o\in \tilde O}\Gamma_o$$
where $\Gamma_o$ is a free $\Zp$-module of rank $|o|$ on which $H$
acts with character $\sum_{i\in o}\chi^i$.
\item For every orbit $o$, $\Gamma_o\subset\Gamma$ is stable under
  $\Zp[G]$ and $\Gamma_o\tensor\Qp$ is an absolutely irreducible
  $\Qp[G]$-module.
\item $\Gamma_o\tensor_{\Zp}\Fp$ is an absolutely irreducible $\Fp[G]$
  module.
\item If $o\neq o'$, then
  $\Gamma_o\tensor_{\Zp}\Qpbar\not\cong\Gamma_{o'}\tensor_{\Zp}\Qpbar$
  and 
  $\Gamma_o\tensor_{\Zp}\Fpbar\not\cong\Gamma_{o'}\tensor_{\Zp}\Fpbar$
  as $G$-modules.
\item Suppose that $\Fq$ is a finite extension of $\F_{p^{|o|}}$.
  Fix $i\in \Z/d\Z$, and let $o$ be the orbit of $\<p\>$
  through $i$.  Make the Witt vectors $W(\Fq)$ into a
  $\Zp[G]$-module by letting $\zeta\in\mu_d=H$ act by multiplication
  by $\zeta^i$ and letting $\Fr_p\in G_1\subset G$ act by the Witt-vector
  Frobenius.  Then we have an isomorphism of $\Zp[G]$-modules
$$W(\Fq)\cong\Gamma_o\tensor_{\Zp}\Zp[\gal(\Fq/\F_{p^{|o|}})].$$ 
\end{enumerate}
\end{prop}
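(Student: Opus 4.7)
The backbone of the argument is the idempotent decomposition of $\Gamma=\Zp[H]$. Because $d$ is a unit in $\Zp$ and $H$ has exponent $d$, the ring $\Qpbar[H]$ splits as $\bigoplus_{i\in\Z/d\Z}\Qpbar\cdot\tilde e_i$ with $\tilde e_i=\frac1d\sum_{\zeta\in H}\chi^{-i}(\zeta)\zeta$. For each orbit $o\in\tilde O$ I would form the sum $\epsilon_o=\sum_{i\in o}\tilde e_i$. The coefficients of $\epsilon_o$ in $\Qpbar[H]$ are orbit sums of Teichm\"uller values, hence are fixed by $\gal(\Qpbar/\Qp)$ (which acts on characters through $\<p\>$), so $\epsilon_o\in\Zp[H]$. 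Then $\{\epsilon_o\}_{o\in\tilde O}$ is a complete set of orthogonal idempotents in $\Zp[H]$ summing to $1$. Setting $\Gamma_o:=\epsilon_o\Gamma$ gives the direct sum decomposition in (1); the $\Zp$-rank equals $|o|$ and the $H$-character is $\sum_{i\in o}\chi^i$ by construction, since $\Gamma_o\otimes\Qpbar=\bigoplus_{i\in o}\Qpbar\tilde e_i$.

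For (2), the key observation is that $G_1$ acts on the set of idempotents $\{\tilde e_i\}$ via $\Fr_p\cdot\tilde e_i=\tilde e_{pi}$; hence it permutes the $\tilde e_i$ within each orbit $o$, fixes $\epsilon_o$, and therefore preserves $\Gamma_o$. Over $\Qpbar$, the decomposition $\Gamma_o\otimes\Qpbar=\bigoplus_{i\in o}V_i$ (with $V_i=\Qpbar\tilde e_i$) exhibits $\Gamma_o\otimes\Qpbar$ as induced from the character of $H\cdot\<\Fr_p^{|o|}\>$ that restricts to $\chi^i$ on $H$ and sends $\Fr_p^{|o|}$ to $1$. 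Since the characters $\chi^i$, $i\in o$, are pairwise distinct on $H$, Mackey's irreducibility criterion (or a direct check that no nonzero $G$-equivariant idempotent in $\en_{\Qpbar[G]}(\Gamma_o\otimes\Qpbar)$ is nontrivial) gives absolute irreducibility; this also forces $\Gamma_o\otimes\Qp$ itself to be irreducible. Part (3) is almost identical: because $p\nmid d$, the characters $\chi^i\bmod p$ (i.e.\ the natural inclusions $H\into\Fpbartimes$ raised to the $i$-th power) are still pairwise distinct for $i$ ranging over $\Z/d\Z$, and the same Mackey argument applies to the reduction $\Gamma_o\otimes\Fp$. Part (4) is then immediate: the multiset of $H$-characters occurring in $\Gamma_o\otimes\Qpbar$ (respectively in $\Gamma_o\otimes\Fpbar$) is exactly the orbit $o$, and distinct orbits yield distinct character multisets, so no two $\Gamma_o$'s can be isomorphic over either $\Qpbar$ or $\Fpbar$.

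For part (5) I would produce an explicit isomorphism. Fix $i\in o$ and let $n=[\Fq:\Fp]$, $m=|o|$, so $m\mid n$. The ring $W(\Fq)$ carries two commuting actions: $\zeta\in H$ acts by multiplication by $[\zeta]^i$ (Teichm\"uller lift to the $i$-th power) and $\Fr_p\in G_1$ acts by the Witt Frobenius $\sigma$. Since the Teichm\"uller values $[\zeta]^i$ satisfy $\sigma([\zeta]^i)=[\zeta]^{pi}$, the restriction of the $H$-action to the $\sigma$-stable subring $W(\F_{p^m})\subset W(\Fq)$ realizes the character $\sum_{i'\in o}\chi^{i'}$, and one checks directly that the natural inclusion $W(\F_{p^m})\into W(\Fq)$ identifies $W(\F_{p^m})$ with $\epsilon_o\cdot W(\Fq)$, i.e.\ with $\Gamma_o\otimes_{\Zp}W(\F_{p^m})$-structure; thus $W(\F_{p^m})\cong\Gamma_o$ as $\Zp[H\rtimes\<\Fr_p^m\>]$-modules. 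Now a normal-basis-type argument (applied to the unramified extension $W(\Fq)/W(\F_{p^m})$, using that $\gal(\Fq/\F_{p^m})$ acts freely on a suitable lift of a normal basis element) gives $W(\Fq)\cong W(\F_{p^m})\otimes_{\Zp}\Zp[\gal(\Fq/\F_{p^m})]$ as $\Zp[G]$-modules, with $G_1$ acting diagonally. Substituting the previous identification gives the desired isomorphism.

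I expect the main technical obstacle to be part (5): the $H$-action, the Frobenius action on $W(\Fq)$ relative to the coset action of $\gal(\Fq/\F_{p^m})$, and the Teichm\"uller twist all have to be bookkept carefully to see that the normal basis isomorphism is $\Zp[G]$-equivariant and not just $\Zp[H\times G_1]$-equivariant under the wrong action. Parts (1)--(4) should be routine given the idempotents.
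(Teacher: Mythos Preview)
Your proposal is correct and follows essentially the same route as the paper: the paper's idempotents $\pi_o$ are your $\epsilon_o$, for irreducibility you invoke Mackey where the paper argues directly that $\<p\>$ permutes the $H$-eigenlines transitively, and for (5) both reduce to the base case $W(\F_{p^{|o|}})\cong\Gamma_o$ and then apply the normal basis theorem. One remark on your write-up of (5): the phrase ``identifies $W(\F_{p^m})$ with $\epsilon_o\cdot W(\Fq)$'' is garbled, since $H$ acts on $W(\Fq)$ through the single character $\chi^i$ and hence $\epsilon_o$ acts as the identity on all of $W(\Fq)$; the paper's cleaner verification of the base case is to note that $W(\F_{p^{|o|}})$ is cyclic over $\Zp[G]$, generated by $1$ with annihilator the left ideal $\bigl([p^{|o|}]-1,\ \prod_{i\in o}([h]-\chi^i(h))\bigr)$ (for $h$ a generator of $H$), so that $1\mapsto\pi_o$ gives the isomorphism $W(\F_{p^{|o|}})\to\Gamma_o$.
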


\begin{proof}
For (1), since  $\sum_{i\in o}\chi^i$ takes values in $\Zp$, setting
$$\pi_o=(1/d)\sum_{h\in H}\left(\sum_{i\in
    o}\chi^{-i}(h)\right)h,$$
we have $\pi_o\in\Zp[H]$.
Orthogonality of characters implies that the elements $\pi_o$ form a
system of orthogonal idempotents: We have $1=\sum_{o\in \tilde
  O}\pi_o$ and $\pi_o\pi_{o'}=0$ if $o\neq o'$.  We define
$\Gamma_o=\pi_o\Gamma$.  This gives a direct sum decomposition
$\Gamma=\oplus_{o\in \tilde O}\Gamma_o$.  It follows from the
definition that $\Gamma_o$ is a free $\Zp$-module.  We may compute
its rank by noting that $\Gamma\tensor_\Zp\Qpbar$ decomposes under
$H$ into lines where $H$ acts by the characters $\chi^i$ with
$i\in\Z/d\Z$, and the subspace $\Gamma_o\tensor_\Zp\Qpbar$ is the
direct sum of the lines where $H$ acts by $\chi^i$ with $i\in o$, so
$\Gamma_o$ has $\Zp$-rank $|o|$.

For (2), since $g\pi_o=\pi_og$ for all $g\in\<p\>$, it
follows that $\Gamma_o$ is stable under $G$.  As an $H$-module,
$\Gamma_o\tensor_\Zp\Qpbar$ decomposes into lines where $H$ acts via
$\chi^i$ with $i\in o$, and $\<p\>$ permutes these lines
transitively, so $\Gamma_o$ is absolutely irreducible as $G$-module.

Part (3) follows from a similar argument, using that $d$ is relatively
prime to $p$, so the $\chi^i$ are distinct modulo $p$.

Part (4) follows immediately from a consideration of characters.

For (5), first consider the case where $\Fq=\F_{p^{|o|}}$.  Now
$W(\F_{p^{|o|}})$ is a cyclic $\Zp[G]$-module generated by $1$ and
with annihilator the left ideal generated by $[p^{|o|}]-1$ and $\prod_{i\in
  o}([h]-\chi^i(h))$ where $h$ is a generator of $H$.  Using this it
is easy to check that $1\mapsto\pi_o$ defines an isomorphism of
$\Zp[G]$-modules $W(\F_{p^{|o|}})\to\Gamma_o$.  The general case
follows from this and the normal basis theorem for $\Fq$ over
$\F_{p^{|o|}}$ (which yields an integral normal basis statement for
the corresponding extension of Witt rings).
\end{proof}

\begin{rem}
  If $M$ is a $\Zp[G]$-module, we write $M^o$ for $\pi_oM$.  By
  definition, $H$ acts on $M^o$ by characters $\chi^i$ with $i\in o$.
  Note, however, that it is \emph{not} clear \emph{a priori} what the
  action of $G_1$ is on $M^o$.  Indeed, the action of $G_1$ does not
  enter into the definition of $\pi_o$ and so we will have to determine
  the full action of $G$ on $M$ by other means.  The reason for not
  using $G_1$ in the definition of $\pi_o$ is that $p$ may divide the
  order of $G_1$, and we prefer to avoid the resulting complications in
  the representation theory of $G$.
\end{rem}

\begin{rem}\label{rem:E(K_d)-as-G-module}
We showed in \cite[Cor.~4.3]{Ulmer14a} that the group $V_d$ appearing
in Theorem~\ref{thm:main} is a cyclic module over $\Z[G]$ with
relations $2\sum_iP_i=2\sum_i(-1)^iP_i=0$.  It follows easily that
$V_d\tensor\Zp$ is isomorphic to 
$$\bigoplus_{o\in O_{d,p}}\Gamma_o.$$
Since $E(K_d)$ is a $G$-invariant superlattice of $V_d$, the absolute
irreducibility of $\Gamma_o$ noted above implies that we also have an
isomorphism of $\Zp[G]$-modules
$$E(K_d)\tensor\Zp\cong \bigoplus_{o\in O_{d,p}}\Gamma_o.$$
\end{rem}

\section{Refined results}\label{s:refined-results} 
In this section we state results on Mordell-Weil and Tate-Shafarevich
groups decomposed for the action of Galois.  These
imply the results stated in Theorem~\ref{thm:main}, and they also give
information in many other contexts.  The proofs will be given in
Section~\ref{s:proofs}.

Throughout, we fix a positive integer $d$ prime to $p$ and a finite
extension $\Fq$ of $\F_{p^{|o|}}$, and we set
$G=\gal(\Fq(u)/\Fp(t))$.  For the results on discriminants and
indices, the choice of $\Fq$ is not material, so we work over
$K_d=\Fp(\mu_d,u)$.  On the other hand, our results on
the Tate-Shafarevich group depend significantly on the choice of
$\Fq$. 

\subsection{Discriminants}
We have seen in \cite{ConceicaoHallUlmer14} that the ``new''  part of
$E(K_d)$ (i.e., the part not coming from $E(K_e)$ with
$e$ a proper divisor of $d$) is trivial if $p$ is not balanced modulo
$d$ and has rank $\phi(d)$ if $p$ is balanced modulo $d$.  In this
subsection we refine this result by breaking up $E(K_d)$ for the
action of $G$ and by computing the $p$-part of the discriminant of the
height pairing. 

Recall that $E(K_d)$ carries a canonical real-valued height pairing
which is non-degenerate modulo torsion.  (See, e.g.,
\cite[4.3]{Ulmer14b}.)  There is a rational-valued pairing
$\<\cdot,\cdot\>$ such that the canonical height pairing is
$\<\cdot,\cdot\>\log(|\Fp(\mu_d)|)$.  For convenience, we work with
the rational-valued pairing.  The group $E(K_d)\tensor\Zp$ inherits a
$\Qp$-valued pairing and the direct sum decomposition
$$E(K_d)\tensor\Zp\cong\bigoplus_{o\in
  O}\left(E(K_d)\tensor\Zp)\right)^o$$
is an orthogonal decomposition for this pairing.  We write
$\disc\left(E(K_d)\tensor\Zp\right)^o$ for the discriminant restricted
to one of the factors.  This is well-defined up to the square of a
unit in $\Zp$, but we will compute it only up to units.

Recall the sequence $a_0,\dots,a_{|o|}$ associated to $o$ in
Subsection~\ref{ss:invs-of-orbits} and the representation $\Gamma_o$
defined in Subsection~\ref{ss:reps}.

\begin{thmss}\label{thm:disc}\mbox{}\begin{enumerate}
\item We have an isomorphism of $\Zp[G]$-modules
$$\left(E(K_d)\tensor\Zp\right)^o\cong\begin{cases}
\Gamma_o&\text{If $\gcd(o,d)<d/2$ and $p$ is balanced modulo
  $d/\gcd(o,d)$}\\
0&\text{otherwise}
\end{cases}$$
\item If $\gcd(o,d)<d/2$ and $p$ is balanced modulo $d/\gcd(o,d)$,
  then up to a unit in $\Zp$ we have
$$\disc\left(E(K_d)\tensor\Zp\right)^o=
p^a$$
where $a={2\sum_{j=1}^{|o|}a_j}$.
\end{enumerate}
\end{thmss}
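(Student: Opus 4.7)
The plan is to prove Theorem~\ref{thm:disc} by the programme outlined in the introduction: use the dominant rational map from a product of Fermat quotient curves to the N\'eron model of $E$ to transfer the Mordell--Weil lattice and its height pairing to the N\'eron--Severi group of that product, and then use crystalline cohomology to read off the decomposition and discriminant orbit by orbit.

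First I would reduce to the case of a \emph{primitive} orbit, one with $\gcd(o,d)=1$. An orbit $o\in O_{d,p}$ with $\gcd(o,d)=e$ corresponds canonically to a new orbit $o'\in O'_{d/e,p}$, and the $o$-isotypic idempotent $\pi_o$ factors through the natural inclusion $E(K_{d/e})\tensor\Zp\into E(K_d)\tensor\Zp$. After this reduction the hypothesis ``balanced modulo $d/\gcd(o,d)$'' becomes the standard condition that $o'$ is balanced. Next, by the construction of Section~\ref{s:DPC}, the N\'eron model of $E$ is dominated by a product $C\times D$ of explicit Fermat quotient curves carrying a compatible $G$-action, and Shioda's argument identifies a $G$-stable summand of $\NS(C\times D)\tensor\Zp$ equivariantly with $E(K_d)\tensor\Zp$, with the intersection pairing corresponding to the rational height pairing up to an explicit rational factor whose $p$-adic valuation is controlled by the DPC construction. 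It thus suffices to decompose the corresponding piece of $\NS(C\times D)\tensor\Zp$ into $o$-isotypic summands and to compute the discriminant of the intersection pairing on each.

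To do so I would use the crystalline machinery of Section~\ref{s:arithmetic-of-CxD}: the $p$-part of $\NS(C\times D)$ is realized as the Frobenius-fixed part (after suitable Tate twist) of the transcendental summand of $H^2_{\mathrm{cris}}(C\times D)$, which K\"unneth splits as $H^1_{\mathrm{cris}}(C)\tensor H^1_{\mathrm{cris}}(D)$. Section~\ref{s:H1(C)} decomposes each $H^1_{\mathrm{cris}}$ into $\mu_d$-character pieces, and the key input is that Frobenius carries the $\chi^i$-eigenspace into the $\chi^{pi}$-eigenspace, acting by a $p$-adic unit when $-i\in A$ and by $p$ times a unit when $-i\in B$. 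Consequently, iterating Frobenius around the orbit on the $o$-isotypic summand of $H^1(C)\tensor H^1(D)$ multiplies the distinguished basis vector by the $p$-powers encoded in the word $w(o)$, and a Frobenius-fixed vector exists precisely when the total valuation matches the Tate twist, i.e.~when $a_{|o|}=0$, which is the balanced condition. This proves the vanishing clause in part~(1); when $o$ is balanced, the $o$-component has $\Zp$-rank $|o|$, and the absolute irreducibility of $\Gamma_o$ from Proposition~\ref{prop:G-reps} together with Remark~\ref{rem:E(K_d)-as-G-module} forces it to be isomorphic to $\Gamma_o$ as a $\Zp[G]$-module.

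For part~(2), the discriminant of the intersection pairing on the $o$-isotypic Tate lattice is, up to $\Zp$-units, the determinant of the Gram matrix of the cup product on that lattice. Using the good-base-point normalization of Definition~\ref{def:base-pts} to keep $a_j\ge0$, one shows that the Tate lattice embeds into a standard $W(\Fq)$-basis of the crystalline $o$-summand with $j$-th coordinate scaled by $p^{a_j}$; since the cup product pairs the $i$-th and $(-i)$-th eigenspaces perfectly on the unit-root level, the Gram matrix in these coordinates is diagonal with entries $p^{2a_j}$ times units, giving the exponent $\sum_{j=1}^{|o|}2a_j$. The main obstacle is this last bookkeeping step: translating the abstract $F$-isocrystal structure into a concrete Gram matrix and checking that the exponent is exactly $2\sum a_j$ rather than some other combinatorial quantity (such as the height $\max a_j$ or the top invariant factor $d_k$ of Subsection~\ref{ss:more-invs}). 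The good-base-point hypothesis is essential precisely because it forces the bi-diagonal matrix $B(e_1,\dots,e_{2k-1})$ of Subsection~\ref{ss:more-invs} to have nonnegative exponents, so that the Tate lattice genuinely embeds into the unit-root lattice with the claimed $p^{a_j}$-scalings rather than $p^{-a_j}$.
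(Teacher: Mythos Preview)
Your overall strategy is the same as the paper's: reduce via the DPC construction (Section~\ref{s:DPC}) and crystalline Tate (Theorem~\ref{thm:cohom-of-product}) to the Frobenius-fixed lattice $H^o\subset M^o$ inside $(H^1(\CC)\otimes H^1(\DD))^{\Delta,o}$, then read off the discriminant from the embedding. The identification $H^o\cong\Gamma_o$ is exactly Proposition~\ref{prop:H^o}, and your reduction to primitive orbits and your crystalline derivation of the vanishing clause (via $a_{|o|}\neq 0$) are minor variations on the paper, which instead cites \cite{ConceicaoHallUlmer14} for the vanishing and works directly with arbitrary balanced orbits.

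There is, however, a genuine slip in your discriminant step. You write that ``the Gram matrix in these coordinates is diagonal with entries $p^{2a_j}$ times units.'' This is not correct: the cup product on $H^1(\CC)\otimes H^1(\DD)$ pairs $e_i\otimes e_i$ with $e_{-i}\otimes e_{-i}$, not with itself, so in the basis $\{p^{a_j}f_{ip^j}\}$ the Gram matrix has the shape of a permutation matrix (in the complementary case it pairs position $j$ with position $j+|o|/2$), and the nonzero entries are $p^{a_j+a_{j'}}$ times units, where $j'$ corresponds to $-ip^j$---not $p^{2a_j}$. In the general balanced case where $-o\neq o$, the cup product on $M^o$ alone is even identically zero, and one must pair $M^o$ with $M^{-o}$. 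The determinant does come out to $p^{2\sum a_j}$ in the end (in the complementary case one checks $a_j+a_{j+|o|/2}=ht(o)$ and $|o|\cdot ht(o)=2\sum a_j$), but your stated reason is wrong.

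The paper sidesteps this entirely: rather than computing a Gram matrix, it observes that the cup product on $M^o$ is perfect (unit discriminant), so $\disc(H^o)$ equals $[M^o:WH^o]^2$ up to units. It then writes the change-of-basis matrix from $\{c_\ell\}$ (a $\Zp$-basis of $H^o$) to $\{f_{ip^j}\}$ as a product of a Vandermonde-type matrix in the $\sigma^j(\eta_\ell)$ (which has unit determinant since $W/\Zp$ is unramified) and the diagonal matrix $\mathrm{diag}(p^{a_0},\dots,p^{a_{|o|-1}})$. This gives the index $p^{\sum a_j}$ directly and hence the discriminant $p^{2\sum a_j}$, without ever needing to know which $f_j$ pairs with which. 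I would recommend you adopt this index-based argument in place of the Gram-matrix claim.
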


\subsection{Indices} 
Now we suppose that:

\begin{itemize}
\setlength{\itemindent}{1in}
\item[(a)] $d=p^f+1$ and $o\in O_{d,p}$ is any orbit;
\item[or (b)] $d=2(p^f-1)$ and $o\in O_{d,p}$ is such that
  $\gcd(o,d)$ is odd.   
\end{itemize}
In these cases, the orbit $o$ is complementary, and the word $w$
associated to each $o$ and may be written in exponential form
$$w=u^{e_1}l^{e_2}\cdots u^{e_k}l^{e_1}u^{e_2}\cdots l^{e_k}$$
where each $e_j>0$ and $k$ is odd.  In this case,
$ht(o)=e_1-e_2+\cdots+e_k$. 

Let $V_d\subset E(K_d)$ be the subgroup generated by the explicit points
as in \cite[8.3]{Ulmer14a} ($d=p^f+1$) or \cite[6.1]{ConceicaoHallUlmer14}
($d=2(p^f-1)$).

\begin{thmss}\label{thm:index} 
  Under the hypotheses \textup{(}a\textup{)} or \textup{(}b\textup{)}
  above we have an isomorphism of $\Zp[G]$-modules
$$\left(E(K_d)/V_d\right)^o\cong\Gamma_o/p^e$$
where $e=(f-ht(o))/2$.  When $\gcd(o,d)=1$,
$e=\sum_{j=1}^{(k-1)/2}e_{2j}$.
\end{thmss}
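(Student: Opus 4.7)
The overall plan is to reduce the computation to a single $p$-adic invariant via Schur's lemma, convert it into a discriminant comparison, and extract it by combining the known discriminant of $E(K_d)^o$ with an explicit computation on $V_d^o$ using the heights of the generating points.

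Under hypothesis (a) or (b) the orbit $o$ is complementary (in particular balanced, with $\gcd(o,d)<d/2$), so Theorem~\ref{thm:disc}(1) gives $(E(K_d)\tensor\Zp)^o\cong\Gamma_o$ and Remark~\ref{rem:E(K_d)-as-G-module} gives $(V_d\tensor\Zp)^o\cong\Gamma_o$, both as $\Zp[G]$-modules. Because $\Gamma_o\tensor\Qp$ is absolutely irreducible (Proposition~\ref{prop:G-reps}(2)), Schur's lemma gives $\en_{\Zp[G]}(\Gamma_o)=\Zp$, so the inclusion $(V_d\tensor\Zp)^o\hookrightarrow(E(K_d)\tensor\Zp)^o$ is multiplication by $p^e\cdot u$ for some $e\ge 0$ and $u\in\Zp^\times$. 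This already yields $(E(K_d)/V_d)^o\cong\Gamma_o/p^e$, and reduces the theorem to showing $e=(f-ht(o))/2$.

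The height pairing restricts to each $o$-isotypic component, so $(V_d\tensor\Zp)^o$ is a full-rank sublattice of $(E(K_d)\tensor\Zp)^o$ of index $p^{e|o|}$, giving
$$v_p\bigl(\disc(V_d\tensor\Zp)^o\bigr)=2e|o|+v_p\bigl(\disc(E(K_d)\tensor\Zp)^o\bigr).$$
The second term equals $2\sum_{j=1}^{|o|}a_j$ by Theorem~\ref{thm:disc}(2). For the first, one writes $V_d^o$ as the cyclic $\Zp[G]$-module generated by $\pi_o P$, where $\pi_o$ is the orthogonal idempotent of Proposition~\ref{prop:G-reps}; its Gram matrix in any integral basis is then a character-weighted combination of the pairings $\<g_iP,g_jP\>$, which were computed explicitly in \cite[\S10]{Ulmer14a} for case (a) and in \cite[\S6]{ConceicaoHallUlmer14} for case (b). Assembling these contributions yields a closed form for the left-hand valuation in terms of the word exponents $e_1,\dots,e_{2k}$ attached to $o$.

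A final combinatorial check, using complementarity ($e_{k+j}=e_j$) together with the interpretation of the $a_j$ as cumulative $\pm1$ sums, identifies the resulting value of $e$ with $(f-ht(o))/2$. When $\gcd(o,d)=1$ one has $|o|=2f$ in both cases, so $e_1+\cdots+e_{2k}=2f$ and $ht(o)=e_1-e_2+\cdots+e_k$, whence $f-ht(o)=2(e_2+e_4+\cdots+e_{k-1})$, giving the second half of the statement. The principal obstacle I anticipate is the orbit-wise evaluation of $v_p(\disc V_d^o)$: the height matrix has rational entries and the $p$-adic valuation of a character-twisted determinant is not directly legible from those formulas. This is exactly where the product-of-curves framework of Sections~\ref{s:DPC}--\ref{s:arithmetic-of-CxD} becomes essential, reinterpreting the height pairing as a crystalline intersection pairing on a product of Fermat quotients and thereby isolating the $p$-adic information cleanly enough to match the combinatorial count.
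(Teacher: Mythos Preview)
Your proposal is correct and matches the paper's argument closely. The paper frames the discriminant comparison as a length computation inside the ambient crystalline module $M^o$, citing Theorem~\ref{thm:disc} for $E(K_d)^o$ and the height results \cite[Thm.~8.2]{Ulmer14a}, \cite[Prop.~7.1]{ConceicaoHallUlmer14} for $V_d^o$ (these give $v_p(\disc V_d^o)=|o|f$ directly, so your anticipated obstacle does not in fact arise); your Schur-lemma step is exactly the paper's ``unique $G$-invariant superlattice'' observation, and the combinatorial identity $\sum_{j=1}^{|o|} a_j=\tfrac{|o|}{2}\,ht(o)$ in the complementary case is the step that produces $e=(f-ht(o))/2$.
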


Under the assumptions of the theorem, it follows that $(E(K_d)/V_d)^o=0$
if and only if the word corresponding to $o$ has height $f$, and that
occurs only for words equivalent up to rotation to $u^fl^f$.

\subsection{Tate-Shafarevich groups}
Recall the integers $d_1,\dots,d_k$ attached to an orbit $o$ in
Subsection~\ref{ss:more-invs}. 

\begin{thmss}\label{thm:sha}
For any $d>2$ prime to $p$ and any $o\in O_{d,p}$, if
$\gcd(o,d)<d/2$ and $p$ is balanced modulo
  $d/\gcd(o,d)$ then:
\begin{enumerate}
\item There is an isomorphism of $\Zp[G]$-modules
$$\sha(E/\Fq(u))^o\cong\frac{\prod_{j=1}^k W_{d_j}(\Fq)}{W_{d_k}(\F_{p^{|o|}})}.$$
\item In particular, if $\Fq=\Fp(\mu_d)$ so that $\Fq(u)=K_d$,
  and $\gcd(o,d)=1$, then
$$\sha(E/K_d)^o\cong\prod_{j=1}^{k-1} W_{d_j}(\F_{p^{|o|}})
\cong\prod_{j=1}^{k-1}\Gamma_o/p^{d_j}.$$
\end{enumerate}
\end{thmss}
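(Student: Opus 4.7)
The plan is to exploit the geometric framework of Section \ref{s:DPC} together with the Shioda--Dummigan style crystalline calculation set up in Section \ref{s:arithmetic-of-CxD}. First I would pass from $\sha(E/\Fq(u))$ to the Brauer group of a product of curves: by Section \ref{s:DPC}, the N\'eron model of $E$ is dominated by a product $\CC\times\DD$ of Fermat quotient curves, and this dominance induces a $G$-equivariant isomorphism between the non-trivial isotypic pieces of $\sha(E/\Fq(u))$ and the corresponding pieces of the $p$-primary transcendental part of $\Br(\CC\times\DD)$ over $\Fq$. Applying the idempotent $\pi_o$ (using the balanced hypothesis to ensure the $o$-component is in the new part) reduces the problem to computing $\Br(\CC\times\DD)^o[p^\infty]$.

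Next, by Section \ref{s:arithmetic-of-CxD} this Brauer group piece is computed as the cokernel of $F-1$ acting on the slope-one part of $H^1_{cris}(\CC/W)\tensor H^1_{cris}(\DD/W)$ modulo N\'eron--Severi classes, where $W=W(\Fq)$ and $F$ is the crystalline Frobenius. Passing to the $o$-isotypic component, the cohomology results of Section \ref{s:H1(C)} provide an explicit $W$-basis of this piece adapted to the Hodge filtrations on $\CC$ and $\DD$. The key point is that in this basis the matrix of $F$ is, up to normalization by a power of $p$, precisely the bidiagonal matrix $B(e_1,\dots,e_{2k-1})$ from Subsection \ref{ss:more-invs}, where $e_1,\dots,e_{2k}$ are the exponents in the word $w(o)$. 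The letters $u$ and $l$ in $w(o)$ record whether a given tensor of eigenvectors lies in $H^0\tensor H^1$ or $H^1\tensor H^0$ of the Hodge filtration, which is precisely what controls the power of $p$ in each entry; the choice of a good base point (Definition \ref{def:base-pts}) is exactly the condition that guarantees $e_{1,j}\ge 0$, matching Remark \ref{rem:equivalent-Bs}.

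With this identification in hand, the $p$-adic exercises of Section \ref{s:exercises} compute the cokernel as an abelian group: Smith normal form, as worked out in Lemmas \ref{lemma:inv-factors1} and \ref{lemma:inv-factors2}, yields invariant factors $p^{d_1},\dots,p^{d_k}$ and hence a cokernel $\prod_{j=1}^k W_{d_j}(\Fq)$. Tracking the Galois action via Proposition \ref{prop:G-reps}(5) upgrades this to the asserted $\Zp[G]$-module isomorphism. The quotient by $W_{d_k}(\F_{p^{|o|}})$ in part (1) arises because exactly one class in the $o$-isotypic part of $H^2_{cris}$ is algebraic (an additional Tate cycle not accounted for by the ``bulk'' N\'eron--Severi contribution) and must be divided out to isolate the transcendental summand. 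Part (2) follows immediately: when $\Fq=\Fp(\mu_d)$ and $\gcd(o,d)=1$ we have $|o|=\ord_d(p)=[\Fp(\mu_d):\Fp]$, so $W_{d_k}(\Fq)=W_{d_k}(\F_{p^{|o|}})$, the last factor cancels against the denominator, and we are left with $\prod_{j=1}^{k-1}W_{d_j}(\F_{p^{|o|}})\cong\prod_{j=1}^{k-1}\Gamma_o/p^{d_j}$.

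I expect the main obstacle to be the second step, namely pinning down the exact matrix of $F$ on the $o$-isotypic crystalline cohomology in a basis compatible with the Hodge filtrations of $\CC$ and $\DD$. This requires identifying, eigencharacter by eigencharacter, the precise $p$-divisibility of the Hasse--Witt/Cartier matrix entries of the relevant Fermat quotient curves with the exponents $e_i$ dictated by the word $w(o)$, and checking that the resulting bidiagonal presentation is canonical enough to carry the $G$-action through. Once this dictionary between $w(o)$ and $B(e_1,\dots,e_{2k-1})$ is established, the remainder of the argument is a bookkeeping application of the invariant factor lemmas already in hand.
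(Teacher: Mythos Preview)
Your overall strategy matches the paper's: reduce to $\Br(\CC\times\DD)[p^\infty]^{\Delta,o}$ via Theorem~\ref{thm:reduction-to-S}(2), translate to crystalline terms via Theorem~\ref{thm:cohom-of-product}(2), and then carry out the semi-linear algebra of Section~\ref{s:exercises}. However, two of your descriptions are inaccurate in ways that would derail the actual computation.

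First, Theorem~\ref{thm:cohom-of-product}(2) does not present the Brauer piece as a cokernel of $F-1$. It gives $\Br[p^n]^{\Delta,o}$ as the cokernel of $H^o/p^n\to H^o_n$, where $H^o=(M^o)^{F=p}$ integrally and $H^o_n=(M^o/p^n)^{F=V=p}$, with $M^o=\bigoplus_{i\in o}W(e_i\otimes e_i)$. Both the $F=p$ and the $V=p$ conditions are needed on the mod-$p^n$ side, and the torsion comes precisely from the failure of these two descriptions to agree. The paper computes $H^o$ and $H^o_n$ separately (Propositions~\ref{prop:H^o} and~\ref{prop:H^o_n}) and then reads off the cokernel; your interpretation of the quotient by $W_{d_k}(\F_{p^{|o|}})$ as the image of the algebraic classes is then correct.

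Second, $B(e_1,\dots,e_{2k-1})$ is not the matrix of $F$. On the basis $f_i=e_i\otimes e_i$ of $M^o$ (note: after taking $\Delta$-invariants these lie in $H^0\otimes H^0$ or $H^1\otimes H^1$, not in the mixed pieces you describe), $F$ is an $|o|\times|o|$ weighted cyclic permutation, sending $f_i$ to $f_{pi}$ or $p^2f_{pi}$. The $k\times k$ bidiagonal matrix only appears after one writes out the basic equations~\eqref{eq:basic} defining $H^o_n$, eliminates all but $k$ of the $|o|$ coordinates to pass to the $\beta_j$, and then untwists the $\sigma$-action via the substitution $\beta\mapsto\gamma$. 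The resulting system $B\gamma=0$ over $W_{ht(o)}$ computes $p^{n-ht(o)}H^o_n$, which together with the surjection onto $W_{n-ht(o)}(\F_{p^{|o|}})$ yields the exact sequence of Proposition~\ref{prop:H^o_n}(1). So the ``main obstacle'' you anticipate is not the Hasse--Witt dictionary (that is Proposition~\ref{prop:CC-cohom}(4), and it is clean), but rather this reduction from the size-$|o|$ semi-linear system to the size-$k$ linear one.
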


Under the assumptions of the theorem, it follows that
$\sha(E/\Fq(u))^o$ is trivial only when $\Fq=\Fp(\mu_d)$ and $k=1$,
and $k=1$ occurs if and only if the word associated to $o$ is
$u^fl^f$.

\section{Domination by a product of curves}\label{s:DPC}
In this section we relate the arithmetic of $E/\Fq(u)$ to that of a
suitable product of curves over $\Fq$.

\subsection{Basic data}
Fix an integer $d$ relatively prime to $p$, let $\Fq$ be a
finite extension of $\Fp(\mu_d)$, and let $G_1=\gal(\Fq/\Fp)$.

Let $\CC$ be the smooth, projective curve over $\Fp$ with affine model
$z^d=x^2-1$.  We write $P_{\pm}$ for the rational points $x=\pm1$,
$z=0$ on $\CC$.  Extending scalars, the group $\mu_2\times\mu_d$ acts
on $\CC\times_{\Fp}\Fq$ by multiplying the $x$ and $z$ coordinates by
roots of unity.  There is also an action of $G_1$ on
$\CC\times_\Fp\Fq$ via the factor $\Fq$.  Altogether we get an action
of $(\mu_2\times\mu_d)\sdp G_1$ on $\CC\times_\Fp\Fq$.  To simplify
notation, for the rest of this section we let $\CC$ denote the curve
over $\Fq$.

Let $\DD$ be the curve associated to $w^d=y^2-1$, so that $\DD$ is
isomorphic to $\CC$.  It has rational points $Q_{\pm}$ and an action
of $(\mu_2\times\mu_d)\sdp G_1$ defined analogously to those of $\CC$.

Let $\SS=\CC\times_{\Fq}\DD$ be the product surface.  We let the group
$\Delta:=\mu_2\times\mu_d$ act on $\SS$ ``anti-diagonally,'' i.e.,
with
$$(\zeta_2,\zeta_d)(x,y,z,w)=
\left(\zeta_2x,\zeta_2^{-1}y,\zeta_dz,\zeta_d^{-1}w\right).$$

Write $\NS(\SS)$ for the N\'eron-Severi group of $\SS$ and $\NS'(\SS)$
for the orthogonal complement in $\NS(\SS)$ of the subgroup generated
by the classes of the divisors $\CC\times\{Q_+\}$ and
$\{P_+\}\times\DD$.  (We could also describe $\NS'(\SS)$ as
$\divcorr\left((\CC,P_+),(\DD,Q_+)\right)$, the group of divisorial
correspondences between the two pointed curves, cf.~\cite[0.5.1 and
II.8.4]{Ulmer11}.)  The intersection form on $\NS(\SS)$ restricts to a
non-degenerate form on $\NS'(\SS)$.  The action of $\Delta$ on $\SS$
induces an action on $\NS'(\SS)$.  

Let $G=\mu_d\sdp G_1$.  We let $G$ act on $\SS$ via its action on
$\CC$; this yields an action of $G$ on $\NS'(\SS)$.  We let $G$ act on
$E(\Fq(u))$ via the identification $G\cong\gal(\Fq(u)/\Fp(t))$. 

The main result of this section relates the arithmetic of the Legendre
curve $E/\Fq(u)$ to that of $\SS$.

\begin{thm}\label{thm:reduction-to-S}
With notation as above:
  \begin{enumerate}
  \item There is a canonical isomorphism
$$E(\Fq(u))\tensor\Z[1/2d]\longisoto\left(\NS'(\SS)\tensor\Z[1/2d]\right)^\Delta$$
where the superscript $\Delta$ denotes the subgroup of invariants.
This isomorphism is compatible with the $G$-actions, and under it the
height pairing on $E(K)$ corresponds to the intersection pairing on
$\NS'(\SS)$.
\item There is a canonical isomorphism
$$\sha(E/\Fq(u))[p^\infty]\longisoto\Br(\SS)[p^\infty]^\Delta.$$
Here $\Br(\SS)$ is the \textup{(}cohomological\textup{)} Brauer group of $\SS$ and
$[p^\infty]$ means the $p$-torsion subgroup.  This
isomorphism is compatible with the $G$-actions.
  \end{enumerate}
\end{thm}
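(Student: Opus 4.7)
The plan is to construct a dominant rational map $\pi\colon\SS\dashrightarrow\mathcal{E}$ from the product surface to a smooth projective model $\mathcal{E}$ of the Legendre elliptic surface $Y^2=X(X+1)(X+u^d)$ over $\P^1_u$, chosen so that $\pi$ is $\Delta$-invariant and descends to a birational equivalence $\SS/\Delta\dashrightarrow\mathcal{E}$. Starting from $z^d=x^2-1$ and $w^d=y^2-1$, one writes down explicit rational functions $u,X,Y$ in $(x,y,z,w)$ satisfying the Legendre equation; the required algebraic identities ultimately rest on $(zw)^d=(x^2-1)(y^2-1)$. This is a dominated-by-product-of-curves construction of the type developed in \cite{Ulmer11} and already implicit in \cite{Ulmer14a,ConceicaoHallUlmer14}. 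Equivariance for $G=\mu_d\sdp G_1$ is immediate from the intertwining of the actions on $\SS$ and on $\mathcal{E}$.

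For part (1), invoke the Shioda--Tate formula to write $E(\Fq(u))\cong\NS(\mathcal{E})/T$, where the trivial sublattice $T$ is generated by the class of the zero section and by the components of the reducible fibres; under this identification the height pairing corresponds to the intersection pairing. Since $|\Delta|=2d$ is invertible in $\Z[1/2d]$ and $\pi$ factors generically as a $\Delta$-quotient, pullback and pushforward between suitable smooth projective models induce inverse isomorphisms $\NS(\mathcal{E})\tensor\Z[1/2d]\isoto\left(\NS(\SS)\tensor\Z[1/2d]\right)^\Delta$, with intersection numbers scaled by $2d$. A direct geometric computation identifies the pullback $\pi^*T$ with the $\Z[1/2d]$-span of the classes $\CC\times\{Q_+\}$ and $\{P_+\}\times\DD$, so quotienting yields the desired isomorphism with $\left(\NS'(\SS)\tensor\Z[1/2d]\right)^\Delta$; compatibility with pairings and with $G$ is then automatic.

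For part (2), use Grothendieck's identification $\sha(E/\Fq(u))\cong\Br(\mathcal{E})$, which is compatible with the $p$-primary decomposition. The quotient $\SS\to\SS/\Delta$ is a $\Delta$-torsor away from a finite locus, and since $|\Delta|=2d$ is invertible modulo $p^n$ for every $n$, the Hochschild--Serre spectral sequence in \'etale cohomology with $\mathbb{G}_m$-coefficients degenerates on $p$-primary parts to yield $\Br(\SS/\Delta)[p^\infty]\isoto\Br(\SS)^\Delta[p^\infty]$. Since the Brauer group of a smooth projective surface is a birational invariant, $\mathcal{E}$ may be replaced by any smooth resolution of $\SS/\Delta$, completing the identification. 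Compatibility with the $G$-action again follows from commutation of the $G$- and $\Delta$-actions on $\SS$.

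The main obstacle will be the explicit construction of $\pi$ and the bookkeeping needed to pin down the image $\pi^*T\tensor\Z[1/2d]$ inside $\left(\NS(\SS)\tensor\Z[1/2d]\right)^\Delta$: one must write the map out, track its ramification along the singular fibres of the Legendre pencil, and verify that no extra $\Delta$-invariant N\'eron--Severi classes slip into $\pi^*T$. Once that is done, both parts follow formally from standard facts about N\'eron--Severi groups, Brauer groups of smooth projective surfaces, and vanishing of higher cohomology for groups of order invertible in the coefficient ring.
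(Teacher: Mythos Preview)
Your overall strategy matches the paper's: construct an explicit rational map from $\SS$ to an elliptic-surface model, use Shioda--Tate for part~(1), and Grothendieck's $\sha\cong\Br$ plus birational invariance for part~(2). Part~(2) of your sketch is essentially identical to the paper's argument and is fine.

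There is, however, a genuine gap in part~(1). Your step ``pullback and pushforward between suitable smooth projective models induce inverse isomorphisms $\NS(\mathcal{E})\tensor\Z[1/2d]\isoto(\NS(\SS)\tensor\Z[1/2d])^\Delta$'' is false as stated: the two sides have different ranks. If $\mathcal{E}$ is the N\'eron model, its trivial lattice $T$ has rank strictly larger than~$2$ (there are reducible fibres over $u=0$, $u\in\mu_d$, $u=\infty$), whereas $(\NS(\SS))^\Delta$ has rank exactly $2+\rk(\NS'(\SS))^\Delta$. So $\pi^*T$ cannot be just the span of the two ruling classes, and in fact there is no isomorphism at the level of full N\'eron--Severi groups. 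The discrepancy is precisely the content that must be worked out, not a bookkeeping afterthought.

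The paper handles this by passing to the (singular) Weierstrass model $\WW$, whose N\'eron--Severi group does have rank $2+\rk E(\Fq(u))$, and by blowing up $\SS$ at eight explicit points $(P_\pm,Q'_\pm)$, $(P'_\pm,Q_\pm)$ to obtain a genuine \emph{morphism} $\phi:\tilde\SS\to\WW$. One then has $(\NS(\tilde\SS)[1/2d])^\Delta\cong\Z[1/2d]^4\oplus(\NS'(\SS)[1/2d])^\Delta$, and the key geometric input (Proposition~\ref{prop:geometry}) is the computation of $\phi_*$ on the four ``trivial'' classes: the two ruling classes each map to a bisection of class $2O+dF$, while the two exceptional-orbit classes map to $O+(d/2)F$ and $O$. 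This pins down a rank-$2$ kernel and yields the isomorphism $(\NS'(\SS)[1/2d])^\Delta\cong\NS(\WW)[1/2d]/\langle O,F\rangle\cong E(\Fq(u))[1/2d]$. Your final paragraph correctly anticipates that this is where the work lies, but the intermediate assertion about $\NS(\mathcal{E})$ should be replaced by this more careful analysis.
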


The rest of this section is devoted to a proof of the theorem and the
discussion of a mild generalization.  Note that the theorem for odd
values of $d$ follows from the case of even $d$ (by taking invariants
by a suitable subgroup of $G$), so \emph{for the rest of this section,
  we assume that $d$ is even}.

\subsection{The basic geometric result}
The main step in the proof of Theorem~\ref{thm:reduction-to-S} is to
relate the N\'eron model of $E/\Fq(u)$ to a suitable quotient of
$\SS$.  To that end, recall the Weierstrass fibration $\WW\to\P^1_u$
(whose fibers are the plane cubic reductions of $E$ at places of
$\Fq(u)$) and the N\'eron model $\EE\to\P^1_u$ which is obtained from
$\WW$ by blowing up singular points in the fibers over $u=0$,
$u\in\mu_d$, and $u=\infty$.  All this is discussed in detail in
\cite[\S7]{Ulmer14a}.

Note that since we are assuming that $d$ is even, $\CC$ has two points
at infinity which we denote $P'_{\pm}$ where the sign corresponds to
the limiting value of $x/z^{d/2}$.  Similarly, $\DD$ has two points at
infinity, denoted $Q'_{\pm}$.

Let $\tilde\SS=\widetilde{\CC\times\DD}$ be the blow up of $\SS$ at
the eight points $(P_{\pm},Q'_{\pm})$ and $(P'_{\pm},Q_{\pm})$.  These
points have stabilizers of order $d/2$ under the action of $\Delta$,
and they fall into two orbits, namely $\{(P_{\pm},Q'_{\pm})\}$ and
$\{(P'_{\pm},Q_{\pm})\}$, under the $\Delta$ action.  The action of
the stabilizer on the projectivized tangent space at each of these
points is trivial, so the action of $\Delta$ lifts canonically to
$\tilde\SS$ and the exceptional fibers are fixed pointwise by the
stabilizer of the corresponding point.  The action of $\Delta$ on
$\tilde\SS$ has other isolated fixed points, but we do not need to
make them explicit.

We let $\tilde\SS/\Delta$ denote the quotient of $\tilde\SS$ by the
action of $\Delta$.  This is a normal, projective surface with
isolated cyclic quotient singularities.  (They are in fact rational
double points, but we will not need this fact.)

Now we define a rational map $\SS\ratto\WW$ by requiring that
$$(x,y,z,w)\mapsto\left([X,Y,Z],u\right)=\left([z^d,xyz^d,1],zw\right)$$ 
where $([X,Y,Z],u)$ are the coordinates on a dense open subset of
$\WW$ as in \cite[\S7]{Ulmer14a}.  This induces a rational map
$\phi:\tilde\SS\ratto\WW$ which is obviously equivariant for the
$\Delta$ action, where $\Delta$ acts trivially on $\WW$.  Thus $\phi$
descends to a map on the quotient which we denote
$\psi:\tilde\SS/\Delta\ratto\WW$.

The following diagram shows the surfaces under consideration and
various morphisms between them:
$$\xymatrix{\llap{$\CC\times\DD=\ $}\SS&
  \tilde\SS\ar[l]_\rho\ar[d]_\pi\ar[rd]^\phi&&\cr
  &\tilde\SS/\Delta\ar[r]_\psi&\WW&\EE\ar[l]^\sigma\cr}$$ 
The quotient map $\pi$ is finite, and we will see just below that the
horizontal maps are birational morphisms.

\begin{prop}\label{prop:geometry}\mbox{}
  \begin{enumerate}
  \item The rational map $\phi$ is in fact a morphism.  Therefore
    $\psi$ is also a morphism and a birational isomorphism.
  \item $\phi$ contracts the strict transforms of  $P_{\pm}\times\DD$
    and $\CC\times Q'_{\pm}$ and is finite elsewhere.
  \item For generic $P\in\CC$, $\phi$ sends $P\times\DD$ to a
    bisection of $\WW\to\P^1$, where the two points in each fiber are
    inverse to one another.  Similarly, for generic $Q\in\DD$, $\phi$
    sends $\CC\times Q$ to a bisection of $\WW\to\P^1$, where the two
    points in each fiber are inverse to one another.
  \item The exceptional divisors over $P_{\pm}\times Q'_{\pm}$ map via
    $\phi$ to the torsion section $[0,0,1]$ of $\WW$, and the
    exceptional divisors over $P'_{\pm}\times Q_{\pm}$ map via $\phi$
    to the zero section $[0,1,0]$ of $\WW$.
  \end{enumerate}
\end{prop}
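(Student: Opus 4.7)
The plan is to proceed by explicit local coordinate computations, treating in turn the boundary points of $\SS$ and the 8 blown-up points. The governing observation is that the product $u = zw$ takes the indeterminate form $0 \cdot \infty$ at exactly the 8 points $(P_\pm, Q'_\pm) \cup (P'_\pm, Q_\pm)$, and on the blowup the natural coordinate on each exceptional $\P^1$ coincides with $u$ itself. Away from these 8 points, $\phi$ is already a morphism on $\SS$: the formula is polynomial where $z, w$ are both finite, and at a boundary point where exactly one of $z, w$ is infinite one uses the uniformizers $z' = 1/z$ or $w' = 1/w$ and passes, if needed, to a chart near $u = \infty$ to extend the formula continuously.

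The heart of the argument is the local computation at the 8 blown-up points. For instance at $(P_+, Q'_+)$, take $s = z$ and $w' = 1/w$ as local parameters on $\SS$, and write $y = y''/w'^{d/2}$ where $y''$ is a unit with $y''(Q'_+) = 1$ (this uses that $d$ is even). On the blowup chart with $w' = s/\mu$, one computes
\[
u = zw = \mu, \qquad X = s^d, \qquad Y = x \, y''\, s^{d/2} \mu^{d/2}, \qquad Z = 1,
\]
so as $s \to 0$ with $\mu$ fixed, $[X:Y:Z] \to [0:0:1]$ and $u \to \mu$; hence the exceptional divisor maps onto the 2-torsion section $\{[0:0:1]\} \times \P^1_u$. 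The other three points $(P_\pm, Q'_\pm)$ behave identically (up to signs). At each of the four points $(P'_\pm, Q_\pm)$, the analogous calculation using $z' = 1/z$ in place of $z$ and rescaling $[X:Y:Z]$ by $Y$ gives $[X:Y:Z] \to [0:1:0]$ with $u$ equal to the blowup coordinate, so the exceptional divisor there maps onto the zero section. This establishes parts (1) and (4); $\Delta$-invariance of $\phi$ is immediate from the formula, and since $\deg \phi = 2d = |\Delta|$ the induced $\psi : \tilde\SS/\Delta \to \WW$ is a birational morphism.

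For part (2), on $P_\pm \times \DD$ the coordinates $z = 0, x = \pm 1$ are fixed, so $X = Y = 0, Z = 1, u = 0$, and the curve collapses to $([0:0:1], 0)$; on $\CC \times Q'_\pm$, rescaling $[X:Y:Z]$ by $Y$ yields $[0:1:0]$ and $u \to \infty$, so it collapses to $([0:1:0], \infty)$. Finiteness of $\phi$ elsewhere follows from properness and $\deg \phi = 2d$: by upper semi-continuity of fiber dimension the positive-dimensional preimage locus is a closed subset of $\WW$, and by directly inspecting preimages of the special points of $\WW$ (singular points of fibers, torsion sections) one sees that the only 1-dimensional fibers are the two curves already identified. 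For part (3), fix generic $P = (x_0, z_0) \in \CC$ with $z_0 \neq 0$; on $P \times \DD$, $X = z_0^d$ is constant and $u = z_0 w$ parametrizes $\P^1_u$, while for each $u$ the two solutions $y = \pm\sqrt{1 + (u/z_0)^d}$ of $w^d = y^2 - 1$ give two preimages with $Y$-coordinates $\pm x_0 z_0^d y$ of opposite sign, hence inverse to each other on the Weierstrass fiber. The symmetric argument handles $\CC \times Q$ for generic $Q$.

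The main obstacle is the local analysis at the 8 blown-up points: selecting the correct uniformizers at each boundary point of $\CC$ and $\DD$, recognizing $u = zw$ as the natural coordinate on each exceptional $\P^1$, and rescaling the projective coordinates $[X:Y:Z]$ by the right factor to extract the limiting section $[0:0:1]$ or $[0:1:0]$. The remaining steps reduce to straightforward substitutions once existence of the morphism is established.
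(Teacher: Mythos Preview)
Your proposal is correct and follows exactly the approach the paper itself prescribes: the paper's proof of this proposition consists of the degree count showing $\psi$ is birational (which you reproduce) together with the sentence ``That $\phi$ is everywhere defined and has the stated geometric properties is a straightforward but tedious exercise in coordinates which we omit.'' You have supplied a careful sketch of precisely this omitted exercise, with the correct local parameters at the boundary points and the key observation that the blowup coordinate coincides with $u=zw$ on each exceptional $\P^1$; your computations at $(P_+,Q'_+)$ and (implicitly) at $(P'_\pm,Q_\pm)$ check out.
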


In part (3), ``$P$ generic'' means $P$ with trivial stabilizer, or
more explicitly $P\neq P_\pm,P'_\pm$ and $x(P)\neq0$.  Similarly for
``$Q$ generic.''

\begin{proof}
  It is easy to see that $\phi$ has generic degree $2d$ and it factors
  through quotient $\tilde\SS\to\tilde\SS/\Delta$ which is finite of
  degree $2d$.  This proves that $\psi$ is birational.  

  That $\phi$ is everywhere defined and has the stated geometric
  properties is a straightforward but tedious exercise in coordinates
  which we omit.  Since $\phi$ is a morphism, it follows that $\psi$
  is also a morphism.
\end{proof}

\subsection{Proof of Theorem~\ref{thm:reduction-to-S}(1)}
We prove part (1) of the Theorem by using the geometry of the 
displayed diagram with the key input being
Proposition~\ref{prop:geometry}.  For typographical convenience, if
$A$ is a finitely generated abelian group, we write $A[1/2d]$ for
$A\tensor\Z[1/2d]$. 

By the Shioda-Tate isomorphism (e.g., \cite[Ch.~4]{Ulmer14b}), we have
a direct sum decomposition
$$\NS(\EE)[1/2d]\cong E(\Fq(u))[1/2d]\oplus T[1/2d]$$
where $T$ is the subgroup of $\NS(\EE)$ generated by the zero section
and the irreducible components of the fibers.  Since $\WW$ is obtained
from $\EE$ by contracting all components of fibers not meeting the
zero section, we have 
$$\NS(\WW)[1/2d]\cong E(\Fq(u))[1/2d]\oplus\<O,F\>[1/2d]$$ 
where $O$ and $F$ are the classes of the zero section and a fiber of
$\WW\to\P^1$ respectively.  These decompositions are orthogonal for
the intersection pairings.  The fibration $\WW\to\P^1_u$ is the base
change of a fibration $\WW\to\P^1_t$, so $G$ acts on $\WW$ and
$\NS(\WW)$.  This action is trivial on $\<O,F\>$ and the last
displayed isomorphism is compatible with the $G$ actions.

Since $\tilde\SS$ is obtained from $\SS$ by blowing up eight points,
we have an orthogonal decomposition
$$\NS(\tilde\SS)\cong\Z^8\oplus\NS(\SS)\cong\Z^{10}\oplus\NS'(\SS).$$
The N\'eron-Severi group of the quotient $\tilde\SS/\Delta$ is
obtained by taking invariants, at least after inverting
$2d=|\Delta|$.  Noting that $\Delta$ permutes the
exceptional divisors of $\tilde\SS\to\SS$ in two orbits and that it
fixes the classes of $P\times\DD$ and $\CC\times Q$, we have
$$\NS(\tilde\SS/\Delta)[1/2d]\cong
\left(\NS(\tilde\SS)[1/2d]\right)^\Delta\cong
\Z[1/2d]^4\oplus\left(\vphantom{\tilde\SS}\NS'(\SS)[1/2d]\right)^\Delta.$$
The action of $G$ on $\CC$ induces an action on $\tilde\SS$ which
descends to $\tilde\SS/\Delta$.

Now we consider the morphism $\psi:\tilde\SS/\Delta\to\WW$ and use the
information provided by Proposition~\ref{prop:geometry}.  It is clear
from the coordinate expression for $\SS\ratto\WW$ that $\psi$ is
equivariant for the $G$ actions.  Part (2) tells us that the kernel of
$\NS(\tilde\SS/\Delta)\to\NS(\WW)$ has rank 2.  Parts (3) and (4)
allow us to determine it explicitly.

To that end, let $f_1$ and $f_2$ be the classes in $\NS(\tilde\SS)$ of
the curves $P\times\DD$ and $\CC\times Q$ respectively.  Also, let
$e_1$ and $e_2$ denotes the classes in $\NS(\tilde\SS)$ of the
exceptional divisors over $P_+\times Q'_+$ and $P'_+\times Q_+$
respectively.  Set $F_i=\pi_*f_i$ and $E_i=\pi_*e_i$ for $i=1,2$.
Then $E_1,E_2,F_1,F_2$ form a basis for the ``trivial part''
$\Z[1/2d]^4$ of $\NS(\tilde\SS/\Delta)[1/2d]$.

By part (3), $\psi_*F_1=\psi_*F_2=\phi_*f_1=\phi_*f_2=$ the class of a
bisection of $\WW\to\P^1$ with inverse points in each fiber.  This
class is easily seen to be $2O+dF$.  Similarly, part (4) tells us that
$\psi_*E_1=\phi_*e_1=O+(d/2)F$ (here we use that we have inverted 2),
and $\psi_*E_2=\phi_*e_2=O$.  The kernel of
$$\NS(\tilde\SS/\Delta)[1/2d]\to\NS(\WW)[1/2d]$$
is thus spanned by $F_1-F_2$ and $F_1-2E_1$.  Moreover, we have that
$\psi_*$ induces an isomorphism
$$\left(\NS'(\SS)[1/2d]\right)^\Delta\cong
\frac{\NS(\tilde\SS/\Delta)[1/2d]}{\<
  F_1,F_2,E_1,E_2\>}\cong
\frac{\NS(\WW)[1/2d]}{\<
  O,F\>}.$$
It follows that
$$\left(\NS'(\SS)[1/2d]\right)^\Delta\cong
E(\Fq(u))[1/2d]$$
and that this isomorphism is compatible with the height and
intersection pairings and the $G$ actions.

This completes the proof of part (1) of the Theorem.

\subsection{Proof of Theorem~\ref{thm:reduction-to-S}(2)}
Two fundamental results of Grothendieck (in \cite{Grothendieck68iii}, see also
\cite[5.3]{Ulmer14b}) say that the Tate-Shafarevich group of
$E/\Fq(u)$ and the Brauer group of $\EE$ are canonically isomorphic,
and that the Brauer group of a surface is a birational invariant.
Applying this to the diagram just before
Proposition~\ref{prop:geometry} shows that
$\sha(E/\Fq(u))\cong\Br(\tilde\SS/\Delta)$.  Since the order of
$\Delta$ is prime to $p$, we have
$$\Br(\tilde\SS/\Delta)[p^\infty]\cong\Br(\tilde\SS)[p^\infty]^\Delta
\cong\Br(\SS)[p^\infty]^\Delta.$$
This yields the isomorphism stated in part (2) of the theorem, and
this isomorphism is compatible with the $G$ actions because the maps
in the diagram above are $G$-equivariant.

\subsection{A higher genus generalization}\label{ss:Xr}
The results in this section generalize readily to a higher genus
example.  Specifically, fix an integer $r>1$ prime to $p$,
and let $X$ be the smooth, proper curve over $\Fp(t)$ defined by
$$y^r=x^{r-1}(x+1)(x+t).$$
The genus of $X$ is $r-1$.  We consider $X$ and its Jacobian $J=J_X$
over extensions $\Fq(u)$ where $u^d=t$, $d$ is prime to $p$, and $\Fq$
is a finite extension of $\Fp(\mu_d,\mu_r)$.  When $d=p^f+1$ and $r$
divides $d$, there are explicit divisors on $X$ yielding a
subgroup of $J(\Fq(u))$ of rank $(r-1)(d-2)$ and finite index.  This
situation is studied in detail in \cite{AIMgroup}.  

Let $\XX\to\P^1_u$ be the minimal regular model of $X$ over the
projective line whose function field is $\Fq(u)$.  Let $\CC=\DD$ be
the smooth, proper curve over $\Fq$ with equation 
$$z^d=x^r-1.$$
Then $\CC$ and $\DD$ carry actions of $\mu_r\times\mu_d$ and we let
$\Delta=\mu_r\times\mu_d$ act on $\SS=\CC\times_{\Fq}\DD$
``anti-diagonally.''  Arguments parallel to those in the proof of
Proposition~\ref{prop:geometry} show that $\XX$ is birationally
isomorphic to $\SS/\Delta$.  Using this, the arguments proving
Theorem~\ref{thm:reduction-to-S} generalize readily to give
isomorphisms
$$J(\Fq(u))[1/rd]\cong\NS'(\SS)[1/rd]^\Delta$$
and
$$\sha(J/\Fq(u))[p^\infty]\cong\Br(\SS)[p^\infty]^\Delta.$$

\section{Arithmetic of a product of curves}\label{s:arithmetic-of-CxD}
In this section, $k$ is a finite field of characteristic $p$, and
$\CC$ and $\DD$ are smooth, projective curves over $k$. Our goal is to
give a crystalline description of $\NS'(\CC\times\DD)$ and
$\Br(\CC\times\DD)$.  The former is due to Tate, and
the latter was done under somewhat restrictive hypotheses by Dummigan
(by a method he says was inspired by a letter of the author, see
\cite[p.~114]{Dummigan99}).  We use a variant of the method to give
the result in general.

\subsection{Flat and crystalline cohomology}
For the rest of this section, we write $W$ for the Witt-vectors
$W(k)$ and $\sigma$ for the Witt-vector Frobenius (lifting the
$p$-power Frobenius of $k$).

Given a smooth projective variety $\XX$ over $k$, we consider the crystalline
cohomology groups of $\XX$ and use the simplified notation
$$H^i(\XX):=H^i_{crys}(\XX/W)$$
for typographical convenience.  These groups are $W$-modules with a
$\sigma$-semi-linear action of the absolute Frobenius, denoted $F$.
When $\XX$ is a curve, we also define a $\sigma^{-1}$-semi-linear
action of Verschiebung, denoted $V$, on $H^1(\XX)$ by requiring that
$FV=VF=p$.  We write $A$ for the non-commutative ring $W\{F,V\}$
generated over $W$ by $F$ and $V$ with relations $Fa=\sigma(a)F$,
$aV=V\sigma(a)$, and $FV=VF=p$.

We will also consider cohomology of sheaves in the flat topology, say
the \emph{fppf} (faithfully flat, finitely presented) topology to fix
ideas.  Recall that
$H^1(\XX,\G_m)\cong\Pic(\XX)$ and that we define the Brauer group of
$\XX$ by
$$\Br(\XX):=H^2(\XX,\G_m).$$
If $\XX$ is smooth and $\dim\XX\le2$, it is known
\cite{Grothendieck68ii} that this definition agrees with that via
Azumaya algebras.

A well-known theorem of Weil asserts that $\CC$ and $\DD$ have
$k$-rational divisors of degree 1.  If $P$ and $Q$ are such, then the
classes in $\NS(\CC\times_k\DD)$ of $P\times\DD$ and $\CC\times Q$ are
independent of the choices of $P$ and $Q$.  We define
$\NS'(\CC\times_k\DD)$ as the orthogonal complement in
$\NS(\CC\times_k\DD)$ of these classes.

The goal of this section is to establish the following crystalline
calculations of the N\'eron-Severi and Brauer groups of a product of
curves.  

\begin{thm}\label{thm:cohom-of-product}\mbox{}
\begin{enumerate}
\item There is a functorial isomorphism
$$\NS'(\CC\times_k\DD)\tensor\Zp\isoto
\left(H^1(\CC)\tensor_{W} H^1(\DD)\right)^{F=p}.$$
\item There is a functorial exact sequence
$$0\to\left(\left(H^1(\CC)\tensor_W H^1(\DD)\right)^{F=p}\right)/p^n\to
\left(H^1(\CC)/p^n\tensor_W H^1(\DD)/p^n\right)^{F=V=p}
\to\Br(\CC\times_k\DD)_{p^n}\to0.$$
\end{enumerate}
Here the exponents mean the subgroups where $F$ and $V$ act as
indicated, and ``functorial'' means that the displayed maps are equivariant for
the action of $\aut(\CC)\times\aut(\DD)$.
\end{thm}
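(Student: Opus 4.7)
The plan is to deduce part~(1) from Tate's theorem on algebraic cycles for a product of curves together with crystalline K\"unneth, and to deduce part~(2) from the fppf Kummer sequence combined with a crystalline description of $H^2_{fl}(\XX,\mu_{p^n})$ via the logarithmic de Rham--Witt complex.

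For part~(1), crystalline K\"unneth gives a Frobenius-equivariant decomposition
$$H^2(\CC\times_k\DD)\;=\;(H^0(\CC)\tensor H^2(\DD))\oplus(H^1(\CC)\tensor H^1(\DD))\oplus(H^2(\CC)\tensor H^0(\DD)),$$
in which each outer summand contributes a rank-one $(F=p)$-eigenspace, spanned by the crystalline cycle class of $\{P\}\times\DD$ or $\CC\times\{Q\}$. Tate's theorem furnishes a cycle class isomorphism $\NS(\CC\times_k\DD)\tensor\Zp\isoto H^2(\CC\times_k\DD)^{F=p}$, and restricting to the orthogonal complements of the two outer classes on both sides yields $\NS'\tensor\Zp=(H^1(\CC)\tensor H^1(\DD))^{F=p}$, functorially in automorphisms of $\CC$ and $\DD$.

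For part~(2), I would apply the fppf Kummer sequence $1\to\mu_{p^n}\to\G_m\xrightarrow{p^n}\G_m\to 1$ to $\XX=\CC\times_k\DD$, obtaining
$$0\to\Pic(\XX)/p^n\to H^2_{fl}(\XX,\mu_{p^n})\to\Br(\XX)_{p^n}\to 0.$$
Using the decomposition $\Pic(\CC\times_k\DD)=\Pic(\CC)\oplus\Pic(\DD)\oplus\Hom(J_\CC,J_\DD)$ and the vanishing of $\Br(\CC)$ and $\Br(\DD)$, projection to the ``new'' ($\Hom$) summand replaces the first term by $\NS'(\XX)/p^n$, which by part~(1) equals $\bigl((H^1(\CC)\tensor H^1(\DD))^{F=p}\bigr)/p^n$. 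The crucial step is then to identify the corresponding new part of $H^2_{fl}(\XX,\mu_{p^n})$ with $(H^1(\CC)/p^n\tensor H^1(\DD)/p^n)^{F=V=p}$. For this I would use the Milne--Illusie identification $H^i_{fl}(\XX,\mu_{p^n})\cong H^{i-1}(\XX_{et},W_n\Omega^1_{\log})$, the defining triangle
$$0\to W_n\Omega^1_{\log}\to W_n\Omega^1\xrightarrow{1-F}W_n\Omega^1/dV^{n-1}W_n\O\to 0,$$
and K\"unneth on the de Rham--Witt complex mod $p^n$, cutting out the desired eigenspace on the $H^1\tensor H^1$ summand. Functoriality in $\aut(\CC)\times\aut(\DD)$ follows since each ingredient is functorial.

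The main obstacle is the bookkeeping of Frobenius and Verschiebung on the mod-$p^n$ K\"unneth tensor: on $H^1\tensor H^1$ the operators $F\tensor F$ and $V\tensor V$ satisfy $(F\tensor F)(V\tensor V)=p^2$ rather than $p$, so $F=p$ and $V=p$ are independent slope-one conditions, both of which must be imposed and both of which must be shown to emerge naturally from Milne--Illusie on the product surface. Verifying compatibility of the resulting three-term sequence with the Kummer boundary --- so that the cokernel is genuinely $\Br(\XX)_{p^n}$ rather than some twist or extension --- is where the careful technical work lies.
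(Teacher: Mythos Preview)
Your treatment of part~(1) is exactly the paper's: Tate's theorem plus crystalline K\"unneth, stripping off the two ``old'' classes.

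For part~(2) you take a genuinely different route. The paper does \emph{not} go through the logarithmic de Rham--Witt complex at all. Instead, after the Kummer sequence step (which you share), it computes $H^2_{fl}(\XX,\mu_{p^n})$ via the Leray spectral sequence for the projection $\pi:\CC\times\DD\to\DD$, using Artin's calculation of $R^i\pi_*\G_m$ to identify the ``new'' part of $H^2_{fl}(\XX,\mu_{p^n})$ with $\Hom_k(J_\CC[p^n],J_\DD[p^n])$. This is then converted to $(H^1(\CC)/p^n\tensor H^1(\DD)/p^n)^{F=V=p}$ by classical contravariant Dieudonn\'e theory (equivalence of categories for finite $p$-group schemes over a perfect field) together with Mazur--Messing's identification $\mathbb{D}(J_\CC)\cong H^1(\CC)$. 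The two conditions $F=p$ and $V=p$ fall out for free here: they are exactly what it means for a $W$-linear map $\mathbb{D}(J_\DD)/p^n\to\mathbb{D}(J_\CC)/p^n$ to commute with both $F$ and $V$, after using self-duality to rewrite $\Hom$ as a twisted tensor product.

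What each approach buys: the paper's Leray/Dieudonn\'e path sidesteps precisely the obstacle you flag. You never have to confront K\"unneth for $W_n\Omega^\bullet$ at finite level or the compatibility between the Kummer boundary and the $1-F$ triangle, because the product structure is exploited \emph{geometrically} (via $\pi$) rather than cohomologically. Your approach is more intrinsic to the surface, but the paper's own remark immediately following the proof is a caution: over a finite field, $H^2_{fl}(\XX,\mu_{p^n})$ is strictly larger than $H^2(\XX/W_n(k))^{F=p}$ because $\Pic^0(\CC)(k)/p^n$ need not vanish. That extra piece lives entirely in the ``old'' part, so your projection to the $H^1\tensor H^1$ summand should in principle kill it, but making this precise through the de Rham--Witt machinery is exactly the delicate bookkeeping you anticipate, and the paper avoids it entirely.
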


\begin{proof}
  We write $\XX$ for $\CC\times_k\DD$.  Part (1) is essentially the
  crystalline Tate conjecture.  More precisely, by a theorem of Tate
  (see \cite{WaterhouseMilne71}) we have an isomorphism
$$\NS(\XX)\tensor\Zp\cong H^2(\XX)^{F=p}.$$
Decomposing $\NS(\XX)$ as $\Z^2\oplus\NS'(\XX)$ and $H^2(\XX)$ via
the K\"unneth formula leads to the statement in part (1).

For part (2), we may assume that $\CC$ and $\DD$ have rational points.
Indeed, the theorem of Weil alluded to above shows that there is an
extension $k'/k$ of degree prime to $p$ such that $\CC$ and $\DD$ have
$k'$-rational points.  Using the Hochschild-Serre spectral sequences
in crystalline and flat cohomologies and the fact that taking
invariants under $\gal(k'/k)$ is an exact functor on groups of
$p$-power order shows that the theorem over $k'$ implies the theorem
over $k$.  We thus assume that $\CC$ and $\DD$ have $k$-rational
points.

Now consider the Kummer sequence
$$0\to\mu_{p^n}\to\G_m\to\G_m\to0$$
for the flat topology on $\XX$.  Taking flat cohomology yields
$$0\to\Pic(\XX)/p^n\to H^2(\XX,\mu_{p^n})\to \Br(\XX)_{p^n}\to 0.$$

Let $T=\Pic(\CC)/p^n\oplus\Pic(\DD)/p^n$.  The natural map
$T\to\Pic(\XX)/p^n$ is an injection with cokernel $\NS'(\XX)/p^n$.
Thus we have a commutative diagram with exact rows and columns:
$$\xymatrix{
&0\ar[d]&0\ar[d]\\
&T\ar[d]\ar@{=}[r]&T\ar[d]\\
0\ar[r]&\Pic(\XX)/p^n\ar[r]\ar[d]&H^2(\XX,\mu_{p^n})\ar[r]\ar[d]
&\Br(\XX)_{p^n}\ar[r]\ar@{=}[d]&0\\
0\ar[r]&\NS'(\XX)/p^n\ar[r]\ar[d]&H^2(\XX,\mu_{p^n})/T\ar[r]\ar[d]
&\Br(\XX)_{p^n}\ar[r]&0\\
&0&0}$$

Using part (1), we have 
$$\NS'(\XX)/p^n\cong
\left(\left(H^1(\CC)\tensor_{W(k)} H^1(\DD)\right)^{F=p}\right)/p^n,$$
so to complete the proof we must show that 
$$H^2(\XX,\mu_{p^n})/T\cong 
\left(H^1(\CC)/p^n\tensor H^1(\DD)/p^n\right)^{F=V=p}.$$

Let $\pi:\CC\times\DD\to\DD$ be the projection on the second factor.
We will compute $H^2(\XX,\mu_{p^n})$ via the Leray spectral sequence
for $\pi$.  By a theorem of Artin proven in \cite{Grothendieck68iii}, 
$$R^i\pi_*\G_m=\begin{cases}
\G_m&\text{if $i=0$}\\
\underline{\Pic}_{\XX/\DD}=\underline{\Pic}_{\CC/k}\times_k\DD&\text{if $i=1$}\\
0&\text{if $i>1$.}
\end{cases}$$
It follows that
$$R^i\pi_*\mu_{p^n}=\begin{cases}
\mu_{p^n}&\text{if $i=0$}\\
\underline{\Pic}_{\XX/\DD}[p^n]=J_\CC[p^n]&\text{if $i=1$}\\
\underline{\Pic}_{\XX/\DD}/p^n=\Z/p^n\Z&\text{if $i=2$}\\
0&\text{if $i>2$.}
\end{cases}$$ (Here we abuse notation slightly---the $k$-group schemes
on the right represent sheaves on $k$ and so by restriction sheaves on
$\DD$.)  Because $\CC$ has a rational point, $\pi$ has a section, so
the Leray spectral sequence degenerates at $E_2$ and we have that
$H^2(\XX,\mu_{p^n})$ is an extension of 
$$H^0(\DD,\Z/p^n\Z),\qquad H^1(\DD,J_\CC[p^n]),
\quad\text{and}\quad H^2(\DD,\mu_{p^n}).$$
The Kummer sequence on $\DD$ shows that 
$$H^2(\DD,\mu_{p^n})\cong\Pic(\DD)/p^n$$
which is an extension of $\Z/p^n\Z$ by $J_\DD(k)/p^n$.  Obviously
$H^0(\DD,\Z/p^n\Z)\cong \Z/p^n\Z$.  

To finish the proof, we must compute $H^1(\DD,J_\CC[p^n])$ in
crystalline terms.  First we make our notation a bit more precise.
Let $N$ be the sheaf on the flat site of $\spec k$ represented by the
finite flat group scheme $J_\CC[p^n]=\Pic_{\CC/k}[p^n]$.  Let
$\sigma$ be the structure map $\DD\to\spec k$ (which has a section
because $\DD$ has a rational point).  Then $H^1(\DD,J_\CC[p^n])$ means
$H^1(\DD,\sigma^*N)$.  Clearly, $\sigma_*\sigma^*N=N$.  By
\cite[III.4.16]{MilneEC} applied to $\sigma$, if $N'$ is the Cartier
dual of $N$, we have
$$R^1\sigma_*\sigma^* N\cong\underline{\Hom}_k(N',\underline\Pic_{\DD/k})
\cong\underline{\Hom}_k(N,\underline\Pic_{\DD/k}).$$
Here $\underline{\Hom}_k$ means the sheaf of homomorphisms of sheaves
on the flat site of $k$, and we have used that $\Pic_{\CC/k}[p^n]$ is
self-dual.

Now we consider the Leray spectral sequence for $\sigma$, which
degenerates because $\sigma$ has a section.  The sequence of low
degree terms is
$$0\to H^1(k,N)\to 
H^1(\DD,\sigma^*N)\to 
H^0(k,\underline{\Hom}_k(N,\underline\Pic_{\DD/k}))\to0.$$
Using 
$$0\to N\to J_{\CC}\labeledto{p^n}J_{\CC}\to0,$$ 
the equality of flat and \'etale cohomology for smooth group schemes,
and Lang's theorem (namely that $H^1(k,J_{\CC})=0$), we find that 
$H^1(k,N)=J_{\CC}(k)/p^n$.

Noting that the argument above applies with the roles of $\CC$
and $\DD$ reversed, we see that $\Pic(\CC)/p^n$ and $\Pic(\DD)/p^n$
are direct factors of $H^2(\XX,\mu_{p^n})$, and we find that 
$$H^2(\XX,\mu_{p^n})/T\cong
H^0(k,\underline{\Hom}_k(N,\underline\Pic_{\DD/k}))
\cong \Hom_k(J_{\CC}[p^n],J_{\DD}[p^n]).$$

We now turn to a crystalline description of the right hand group.
Letting $\mathbb{D}(\CC)$ and $\mathbb{D}(\DD)$ be the (contravariant)
Dieudonn\'e modules of the $p$-divisible groups of $J_\CC$ and $J_\DD$
respectively, the main theorem of Dieudonn\'e theory (equivalence of
categories) gives
$$\Hom(J_{\CC}[p^n],J_{\DD}[p^n])=
\Hom_A(\mathbb{D}(\DD)/p^n,\mathbb{D}(\CC)/p^n).$$ 
Here $\Hom_A$ means homomorphisms commuting with the action of
$A=W\{F,V\}$, i.e., with the actions of $F$ and $V$.

To finish, we use the result of Mazur and Messing
\cite{MazurMessingUEa1DCC} that $\mathbb{D}(\CC)\cong
H^1(\CC)$ and $\mathbb{D}(\DD)\cong H^1(\DD)$, and the duality
$\mathbb{D}(\DD)^*\cong\mathbb{D}(\DD)(-1)$ (Tate twist), so that
$$\Hom_A(\mathbb{D}(\DD)/p^n,\mathbb{D}(\CC)/p^n)\cong
\left(H^1(\CC)/p^n\tensor H^1(\DD)/p^n\right)^{F=V=p}.$$

This completes the proof of the theorem.
\end{proof}

\begin{rem}
A theorem of Illusie \cite[5.14]{Illusie79b} says that for a smooth
projective surface $\XX$ over an algebraically closed field $k$, we have
$$H^2(\XX,\Zp(1))\cong H^2(\XX/W(k))^{F=p}.$$

The proof of Theorem~\ref{thm:cohom-of-product}(2) can be adapted to
show that (when $\XX$ is a product of curves), this continues to hold
at finite level:  $H^2(\XX,\mu_{p^n})\cong H^2(\XX/W_n(k))^{F=p}$.
Conversely, a proof of this statement would yield a simple proof of
part (2) of the theorem (over an algebraically closed field).

On the other hand, the proof above shows that over a finite ground
field $H^2(\XX,\mu_{p^n})$ may be strictly bigger than
$H^2(\XX/W_n(k))^{F=p}$.  The point is that when $k$ is
algebraically closed, $\Pic(\CC)/p^n$ is $\Z/p^n\Z$ (because
$\Pic^0(\CC)$ is divisible), but it may be bigger when $k$ is finite.
\end{rem}

\section{Cohomology of $\CC$}\label{s:H1(C)}
In this section, we collect results on the crystalline cohomology of
the curve $\CC$ needed in the sequel.  Some of them may already be
available in the literature on Fermat curves, but for the convenience
of the reader we sketch arguments from first principles.

\subsection{Lifting}
From here until Subsection~\ref{ss:Cr}, $\CC$ will denote the smooth
projective model of the affine curve over $\Fp$ defined by
$z^d=x^2-1$.  (E.g., if $d$ is even, $\CC$ is the result of glueing
$\spec\Fp[x,z]/(z^d-x^2+1)$ and $\spec\Fp[x',z']/(z^{\prime
  d}-x^{\prime2}+1)$ via $(x',z')=(x/z^{d/2},1/z)$.  The case $d$ odd
is similar.)  The projective curve has a natural lifting to
$W(\Fp)=\Zp$ defined by the same equations.  We write $\CC/\Zp$ for
this lift.  It is smooth and projective over $\Zp$ with special fiber
$\CC$.

\subsection{Actions}
There is a canonical isomorphism $H^1_{crys}(\CC/\Zp)\cong
H^1_{dR}(\CC/\Zp)$ where the left hand side is the crystalline
cohomology of $\CC$ and the right hand side is the algebraic de Rham
cohomology of $\CC/\Zp$.  We will use this isomorphism to make the
crystalline cohomology explicit, endow it with a Hodge filtration, and
describe the actions of Frobenius, Verschiebung, $\mu_d$, and $\mu_2$ on it.  

Let $q$ be a power of $p$ congruent to 1 modulo $d$ so that $\Fq$
contains $\Fp(\mu_d)$.  Then $\CC/W(\Fq)=\CC/\Zp\times_{\Zp}W(\Fq)$
admits an action of the $d$-th roots of unity (acting on the
coordinate $z$) and $\mu_2=\pm1$ (acting on the coordinate $x$).

Recall that the absolute Frobenius of $\CC$ defines a $\Zp$-linear
homomorphism
$$F:H^1_{crys}(\CC/\Zp)\to H^1_{crys}(\CC/\Zp)$$ 
which induces a semi-linear homomorphism
$$F:H^1_{crys}(\CC/W(\Fq))\cong H^1_{crys}(\CC/\Zp)\tensor_{\Zp}W(\Fq)\to
H^1_{crys}(\CC/W(\Fq))$$ (semi-linear with respect to the Witt-vector
Frobenius $\sigma$).  We also have a $\sigma^{-1}$-semi-linear
endomorphism
$$V:H^1_{crys}(\CC/W)\to H^1_{crys}(\CC/W)$$
which is characterized by the formulas $FV=VF=p$.

Letting $\Fr_p\in\gal(\Fq/\Fp)$ act on
$\CC/\Fq=\CC\times_\Fp\Fq$ via the second factor, we get a semi-linear
endomorphism of $H^1(\CC/W)$ which fixes $H^1(\CC/\Zp)$.  Combining the
actions of $\mu_d$ and $\Fr_p$ gives a $\Zp$-linear action of
$G=\mu_d\sdp\gal(\Fq/\Fp)$ on $H^1_{crys}(\CC/W)$.

\subsection{A Basis}
By \cite[$0_{III}$, 12.4.7]{EGA3a}, we may define elements of
$H^1_{dR}(\CC/\Zp)$ by giving hypercocycles for an affine cover.  We
do so as follows:  For
$i=1,\dots,\lfloor (d-1)/2\rfloor$, let $e_i$ be the class defined by the
regular 1-form
$$\frac{z^{i-1}dz}{2x}.$$
Let $U_1$ be the affine curve defined by $z^d=x^2-1$ considered as a
Zariski open subset of $\CC/\Zp$.  Let $U_2$ be the complement of the
closed set where $z=0$ in $\CC/\Zp$.  Thus $U_1$ and $U_2$ define an
open cover of $\CC/\Zp$.  For $i=1,\dots,\lfloor (d-1)/2\rfloor$, the data
\begin{align*}
f^i_{12}&=\frac x{z^i}\in\OO_{\CC/\Zp}(U_1\cap U_2)\\
\omega^i_1&=\left(1-\frac{2i}{d}\right)\frac{dx}{z^i}\in\Omega^1_{\CC/\Zp}(U_1)\\
\omega^i_2&=\frac{ix\,dz}{z^{i+1}}-\frac{2i}{d}\frac{dx}{z^i}\in\Omega^1_{\CC/\Zp}(U_2)
\end{align*}
satisfies $df^i_{12}=\omega^i_1-\omega^i_2$ and so defines a class in
$H^1_{dR}(\CC/\Zp)$ which we denote $e_{d-i}$.  

\begin{props}\label{prop:CC-cohom}
  The classes $e_i$ \textup{(}$0< i<d$, $i\neq d/2$\textup{)} form a
  $\Zp$-basis of $H^1_{dR}(\CC/\Zp)$ and have the following
  properties:
\begin{enumerate}
\item The cup product $H^1_{dR}(\CC/\Zp)\times
  H^1_{dR}(\CC/\Zp)\to\Zp$ satisfies \textup{(}and is determined
  by\textup{)} the fact that for $0<i<d$ and $0<j<d$,
$$e_i\cup e_j=\begin{cases}
1&\text{if $i<d/2$ and $j=d-i$}\\
-1&\text{if $i>d/2$ and $j=d-i$}\\
0&\text{otherwise.}\end{cases}$$
\item The classes $e_i$ with $1\le i\le \lfloor (d-1)/2\rfloor$ form a
  $\Zp$-basis of the submodule $H^0(\CC/\Zp,\Omega^1_{\CC/\Zp})$ of
  $H^1_{dR}(\CC/\Zp)$, and the classes $e_i$ with
  $\lfloor(d+1)/2\rfloor\le i\le d-1$ project to a basis of the
  quotient module $H^1(\CC/\Zp,\OO_{\CC/\Zp})$.
\item The action of $\mu_d$ on $H^1_{crys}(\CC/W(\Fq))\cong
  H^1_{dR}(\CC/\Zp)\tensor_{\Zp}W(\Fq)$ is given by
$$[\zeta]e_i=\zeta^ie_i.$$
Also, $-1\in\mu_2$ acts on $H^1_{crys}(\CC/W(\Fq))$ as multiplication
by $-1$.
\item For $0<i<d$, we have $F(e_i)=c_ie_{pi}$ where
  $c_i\in\Zp$ satisfies
$$\ord(c_i)=\begin{cases}
0&\text{if $i>d/2$}\\
1&\text{if $i<d/2$.}\end{cases}$$
\textup{(}In $e_{pi}$, we read the subscript modulo $d$.\textup{)}
\item If $o\in O_{d,p}$, $d/\gcd(d,o)>2$, and $p$ is balanced modulo
  $d/\gcd(d,o)$ \textup{(}in the sense of
  Subsection~\ref{ss:balanced}\textup{)},
  then $\prod_{i\in o}c_i=\pm p^{|o|/2}$.  Equivalently, for all $i\in
  o$, $F^{|o|}e_i=\pm p^{|o|/2}e_i$.
\end{enumerate}
\end{props}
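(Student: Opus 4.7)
The plan is to carry everything out in the Čech–de Rham complex attached to the cover $\{U_1, U_2\}$ defining $\CC$ over $\Zp$, using the explicit forms and hypercocycles given in the statement, then combine this with Frobenius–Hodge compatibility and the Weil conjectures.

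For parts (2) and (3), I would verify locally that $\omega_i := \frac{z^{i-1}\,dz}{2x}$ extends to a regular 1-form at every point of $\CC/\Zp$, using $z$ as a parameter near $x=\pm 1$, using $x$ as a parameter where $x=0$, and the chart $(x',z')=(x/z^{d/2},1/z)$ at infinity for $d$ even (with the obvious modification for $d$ odd). Since the $\omega_i$ sit in distinct $\mu_d$-eigenspaces and $\lfloor(d-1)/2\rfloor=g$, they form a $\Zp$-basis of $H^0(\CC/\Zp,\Omega^1)$. The hypercocycles $(f_{12}^i,\omega_1^i,\omega_2^i)$ are arranged so that $df_{12}^i=\omega_1^i-\omega_2^i$, and the parallel count shows the remaining classes $e_{d-i}$ project to a basis of the quotient $H^1(\CC/\Zp,\O)$. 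For (3), the transformation laws of $x$, $z$, $dx$, and $dz$ under $(z,x)\mapsto(\zeta z,x)$ and $(z,x)\mapsto(z,-x)$ read off the $\mu_d$- and $\mu_2$-characters directly.

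For (1), the Hodge decomposition disposes of most cases automatically: two regular forms cup in $H^{2,0}=0$, and two ``upper'' classes cup in $H^{0,2}=0$, so $e_i\cup e_j$ vanishes unless one index is in $(0,d/2)$ and the other in $(d/2,d)$. The mixed pairing $e_i\cup e_{d-i}$ (for $i<d/2$) is computed explicitly in the hypercocycle model, reducing to a residue-style computation on $U_1\cap U_2$ built from $f_{12}^{d-i}$ and $\omega_i$; it evaluates to $\pm 1$, with the sign determined by which of $\{i,d-i\}$ is the regular-form index.

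For (4), since $\{e_j\}$ is $\Zp$-rational and $F$ is $\Zp$-linear, $F(e_i)$ has $\Zp$-coefficients in that basis. After base-change to $W(\Fq)$ with $\Fq\supseteq\Fp(\mu_d)$, $\sigma$-semi-linearity of $F$ forces it to send $H(\chi^i)$ into $H(\chi^{pi})$, and combined with $\Zp$-rationality this yields $F(e_i)=c_ie_{pi}$ with $c_i\in\Zp$. For the valuation I would invoke the standard Hodge compatibility $F(F^1H^1_{dR})\subseteq p\cdot H^1_{dR}$ (Mazur): for $i<d/2$, $e_i\in F^1$ forces $\ord(c_i)\ge 1$. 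The identity $p\cdot(e_i\cup e_{d-i})=F(e_i\cup e_{d-i})=c_ic_{d-i}(e_{pi}\cup e_{d-pi})$ together with part (1) then gives $c_ic_{d-i}=\pm p$; combined with the Mazur bound this forces $\ord(c_i)=1$ for $i<d/2$ and $\ord(c_i)=0$ for $i>d/2$.

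For (5), applying (4) to each $j\in o$ yields $\ord_p(\prod_{j\in o}c_j)=|o\cap A|=|o|/2$ under the balanced hypothesis. Thus $\pi_o:=\prod_{j\in o}c_j\in\Zp$ has $p$-adic valuation exactly $|o|/2$, and by the Weil conjectures applied to $F^{|o|}$ on $H^1(\CC)$ the algebraic number $\pi_o$ has archimedean absolute value $p^{|o|/2}$. So $u:=\pi_o/p^{|o|/2}$ is an algebraic integer that is a $p$-adic unit with all archimedean absolute values equal to $1$; Kronecker's theorem makes $u$ a root of unity, and lying in $\Zp^\times$ forces $u\in\mu_{p-1}$. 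The main obstacle is to sharpen this to $u\in\{\pm 1\}$. In the self-dual (complementary) case $o=-o$, Poincaré duality $c_ic_{d-i}=\pm p$ summed over $i\in o$ gives $\pi_o^2=\pi_o\pi_{-o}=\pm p^{|o|}$, so $\pi_o=\pm p^{|o|/2}$ immediately. In the general balanced case I would pass to $W(\Fpbar)$ and appeal to the Dieudonné–Manin classification: $H^{(o)}$ is an isoclinic $F$-crystal of slope $\tfrac12$ and rank $|o|$ on which $F^{|o|}$ is already scalar multiplication by $\pi_o$, and matching this scalar action against the rigidity of the standard slope-$\tfrac12$ crystal (on which $F^2=p$) is what should pin down $\pi_o^2=p^{|o|}$, completing the proof.
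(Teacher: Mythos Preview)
Your treatment of (1)--(4) is essentially correct and parallels the paper's. Two minor remarks. In (1), the sentence ``two upper classes cup in $H^{0,2}=0$'' is not quite right: the classes $e_{d-i}$ are specific lifts of elements of $H^1(\CC,\O)$, not elements of a canonical complement to $F^1$, so vanishing of their cup product does not follow from $H^2(\CC,\O)=0$ alone. The clean way to get all the off-diagonal vanishing is $\mu_d$-equivariance: automorphisms act trivially on $H^2_{dR}$, so $e_i\cup e_j=0$ unless $i+j\equiv0\pmod d$. The paper simply does the residue computation directly. In (4), your argument combining the Mazur divisibility $F(F^1)\subset pH^1$ with the cup-product identity $c_ic_{d-i}=\pm p$ is a clean self-contained alternative to the paper's appeal to the full Newton-above-Hodge result.

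For (5), your complementary-case argument is correct and complete: the telescoping $\prod_{i\in o}\epsilon_i/\epsilon_{pi}=1$ gives $\pi_o\pi_{-o}=+p^{|o|}$ exactly, and when $o=-o$ this is $\pi_o^2=p^{|o|}$, forcing $\pi_o=\pm p^{|o|/2}$. This already covers the principal case $d\mid p^f+1$ used elsewhere in the paper.

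However, your Dieudonn\'e--Manin argument for the general balanced, non-complementary case has a genuine gap. The classification over $W(\Fpbar)$ only says that the isocrystal $H^{(o)}[1/p]$ is isomorphic to a sum of copies of $M_{1,2}$; it does not respect any $\Zp$-structure. Concretely, if $\phi:H^{(o)}\to M_{1,2}^{|o|/2}$ is such an isomorphism and $\phi(e_i)=\sum a_jw_j$, then the equation $F^{|o|}e_i=\pi_o e_i$ translates into $\sigma^{|o|}(a_j)/a_j=\pi_o/p^{|o|/2}$, and this is solvable for \emph{any} unit on the right-hand side. So Dieudonn\'e--Manin gives no constraint on the root of unity $\pi_o/p^{|o|/2}\in\mu_{p-1}$ beyond what you already extracted from Kronecker. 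When $o\neq -o$, the relation $\pi_o\pi_{-o}=p^{|o|}$ only says the two roots of unity are inverse, not that they are $\pm1$. The paper closes this gap by an entirely different route: it identifies $\pi_o$ with $\lambda(-1)J(\lambda,\chi^i)$ for $\lambda$ the quadratic character, and then invokes \cite[Prop.~4.1]{ConceicaoHallUlmer14}, a Stickelberger-type computation showing that under the balanced hypothesis this Jacobi sum is exactly $\pm p^{|o|/2}$.
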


\begin{proof}
  Once we know that the $e_i$ form a basis, the formula in (1)
  determines the cup product.  To check the formula, one computes in
  the standard way:
  The cup product $e_i\cup e_{d-j}$ is given by the sum over
  points in $U_1$ of the residue of the meromorphic differential
  $z^{i-j}\,dz/(2z)$, and this sum is 1 or 0 depending on whether
  $j=i$ or not.

The formula in (1) implies that the classes $e_i$ with
$0<i<d$ and $i\neq d/2$ are linearly independent in
$H^1_{dR}(\CC/\Fp)$ and so they form an $\Fp$-basis since the genus of
$\CC$ is $(d-\gcd(d,2))/2$.  It follows that the $e_i$ form a
$\Zp$-basis of $H^1_{dR}(\CC/\Zp)$. 

It is clear from the definition that the $e_i$ with $0<i<d/2$ are
in the submodule $H^0(\CC/\Zp,\Omega^1_{\CC/\Zp})$ and so they form a
  basis by a dimension count.  Part (1) and Serre duality imply that
  the $e_i$ with $d/2<i<d$ project to a basis of
  $H^1(\CC/\Zp,\OO_{\CC/\Zp})$.  This proves part (2).

Part (3) follows immediately the definition of the $e_i$.  

It follows from part (3) that $F(e_i)=c_ie_{pi}$ for some
$c_i\in\Zp$.  Indeed, Frobenius must send the subspace of
$H^1(\CC/W(k))$ where $[\zeta]$ acts by $\zeta^i$ to the subspace
where it acts by $\zeta^{pi}$.  By (3), these subspaces are spanned by
$e_i$ and $e_{pi}$ respectively, so $F(e_i)=c_ie_{pi}$, and $c_i$
must lie in $\Zp$ since $F$ acts on $H^1_{crys}(\CC/\Zp)$.  The
assertion on the valuation of $c_i$ follows from \cite[Lemma,
p.~665]{Mazur72b} and \cite[top of p.~65]{Mazur73} This proves part
(4).

For part (5), a standard calculation
(cf.~\cite[Ch.~11]{IrelandRosenCIMNT}) gives the eigenvalues of
$F^{|o|}$ in terms of Jacobi sums.  Using the notation of
\cite{ConceicaoHallUlmer14},
$F^{|o|}e_i=\lambda(-1)J(\lambda,\chi^i)e_i$ where $\lambda$ is a
character of $k=\F_{p^{|o|}}$ of order $2$ and $\chi$ is a character
of order $d$.  By \cite[Prop.~4.1]{ConceicaoHallUlmer14}, the Jacobi
sum is $\pm p^{|o|/2}$.

This completes the proof of the Proposition.
\end{proof}

\begin{rem}\label{rem:u-l-motivation}
Part (4) of the Proposition is the reason for the minus signs in the
definition of the word attached to an orbit in
Subsection~\ref{ss:invs-of-orbits}.  Indeed, if $i<d/2$, so that $e_i$
is in $H^0(\CC/\Zp,\Omega^1_{\CC/\Zp})$, then $F(e_i)$ is divisible by
$p$ (i.e., its ``valuation'' has gone up), whereas if $i>d/2$, then
$F(e_i)$ is not divisible by $p$ (i.e., its ``valuation'' is still low).
\end{rem}

\subsection{Generalization to $r>2$}\label{ss:Cr}
Most of the above extends to the curve $\CC_r$ defined by $z^d=x^r-1$
for any $r$ which is $>1$ and relatively prime to $p$.  We give the
main statements; their proofs are entirely parallel to those in the
case $r=2$.

The curve $\CC_r$ has an obvious lift to $\Zp$ which we denote $\CC_r/\Zp$.
This yields an identification $H^1_{crys}(\CC_r/\Zp)\cong
H^1_{dR}(\CC_r/\Zp)$. 

For $i\in\Z/d\Z$, we write $\<i/d\>$ for the fractional part of $i/d$
(for any representative of the class of $i$).  Similarly for
$j\in\Z/r\Z$ and $\<j/r\>$.  Let $A$ be the subset of
$\Z/d\Z\times\Z/r\Z$ consisting of $(i,j)$ where $i\neq0$, $j\neq0$
and $\<i/d\>+\<j/r\>>1$.  Let $B$ be the subset where $i\neq0$,
$j\neq0$, and $\<i/d\>+\<j/r\><1$.  Let $S=A\cup B$.

There is a $\Zp$-basis of $H^1_{dR}(\CC_r/\Zp)$ consisting of classes
$e_{i,j}$ with $(i,j)\in S$ with the following properties:
\begin{enumerate}
\item $$e_{i,j}\cup e_{i',j'}=\pm\delta_{ii'}\delta_{jj'}$$
where the sign is $+$ if $(i,j)\in A$ and $-$ if $(i,j)\in B$.
\item The $e_{i,j}$ with $(i,j)\in A$ form a basis of
  $H^0(\CC_r/\Zp,\Omega^1_{\CC_r/\Zp})$, and the $e_{i,j}$ with
  $(i,j)\in B$ project to a basis of $H^1(\CC_r/\Zp,\OO_{\CC_r/\Zp})$.
\item If $q$ is such that $\Fq$ contains $\Fp(\mu_d,\mu_r)$, then the
  action of $\mu_d\times\mu_r$ on $H^1(\CC_r/W(\Fq))$ is given by
$$[\zeta_d,\zeta_r]e_{i,j}=\zeta_d^i\zeta_r^j e_{i,j}.$$
\item $F(e_{i,j})=c_{i,j}e_{pi,pj}$ where $c_{i,j}\in\Zp$ satisfies
$$\ord_p(c_{i,j})=\begin{cases}
0&\text{if $(i,j)\in B$}\\
1&\text{if $(i,j)\in A$.}\end{cases}$$
\end{enumerate}

There is also a notion of balanced which we now explain.  Let
$H=(\Z/\lcm(d,r)\Z)^\times$ and let $H$ act on $S$ by multiplication
in both coordinates.  Let $\<p\>$ be the cyclic subgroup of $H$
generated by $p$.  If $(i,j)\in S$, we say \emph{the ray through
  $(i,j)$ is balanced} if for all $t\in H$, the orbit $\<p\>t(i,j)$ is
evenly divided between $A$ and $B$, i.e., 
$$\left|\<p\>t(i,j)\cap A\right|=\left|\<p\>t(i,j)\cap B\right|.$$

The final property of $\CC_r$ we mention is:
\begin{itemize}
\item[(5)] For $(i,j)\in S$, let $o=\<p\>(i,j)$ and set
$$J_o=\prod_{(i',j')\in o}c_{i',j'}.$$
Then $J_o$ is a root of unity times $p^{|o|/2}$ if and only if the ray
through $(i,j)$ is balanced.
\end{itemize}

To prove this, we note that the displayed product is an eigenvalue of
$F^{|o|}$ on $H^1_{crys}(\CC_r/\Zp)$.  This eigenvalue may be
identified with a Jacobi sum, and arguments parallel to those in
\cite[Prop.~4.1]{ConceicaoHallUlmer14} using Stickelberger's Theorem
show that the Jacobi sum is a root of unity times $p^{|o|}$ if and
only if the ray through $(i,j)$ is balanced.  In
\cite{ConceicaoHallUlmer14}, these roots of unity were always $\pm1$.
If $r$ divides $d$ and $d$ divides $p^f+1$, then again these root of
unity are $\pm1$.  In the more general context, all we can say is that
they are roots of unity of order at most $\gcd(\lcm(r,d),p-1)$.

To close this section, we note that the apparatus of orbits, words, and
the associated invariants (as in Section~\ref{s:orbits}) applies as
well to the cohomology of $\CC_r$ as soon as we replace ``$i>d/2$''
and ``$i<d/2$'' with ``$(i,j)\in A$'' and ``$(i,j)\in B$"
respectively.

\section{$p$-adic exercises}\label{s:exercises}
Fix as usual an odd prime number $p$, a positive integer $d$
relatively prime to $p$, and an extension $\Fq$ of $\Fp(\mu_d)$, and
consider $E$ over $\Fq(u)$ where $u^d=t$.

Theorem~\ref{thm:reduction-to-S} and
Theorem~\ref{thm:cohom-of-product} reduce the problem of computing
$E(\Fq(u))$ and $\sha(E/\Fq(u))$ to exercises in semi-linear algebra with
raw data supplied by Proposition~\ref{prop:CC-cohom}.  

In this section, we carry out these $p$-adic exercises.

\subsection{Setup}
We write $W$ for the Witt vectors $W(\Fq)$, $W_n$ for $W_n(\Fq)$,
$H^1(\CC)$ for $H^1_{crys}(\CC/W)$, and $H^1(\DD)$ for
$H^1_{crys}(\DD/W)$ where $\CC=\DD$ is the curve over $\Fq$ studied in
Section~\ref{s:H1(C)}.  The product $\CC\times_\Fq\DD$ carries an
action of $\Delta=\mu_2\times\mu_d$ acting ``anti-diagonally'' as well
as an action of $G=\mu_d\sdp\gal(\Fq/\Fp)$ acting on the factor $\CC$.

Our goal is to compute
$$H:=\left(H^1(\CC)\tensor_W H^1(\DD)\right)^{\Delta, F=V=p}$$
and
$$H_n:=\left(H^1(\CC/W_n)\tensor_W H^1(\DD/W_n)\right)^{\Delta, F=V=p}.$$

For an orbit $o\in O_{d,p}$, we write $H^o$ and $H^o_n$ for the $o$
parts of the corresponding groups, i.e., for the images of the
projector $\pi_o$ on $H$ or $H_n$.

Since $H^1(\CC)$ and $H^1(\DD)$ free $W$-modules and the order of $\Delta$ is
prime to $p$, we have
\begin{align*}
\left(H^1(\CC/W_n)\tensor_W H^1(\DD/W_n)\right)^\Delta
&=\left(\left(H^1(\CC)\tensor_W H^1(\DD)\right)/p^n\right)^\Delta\\
&=\left(\left(H^1(\CC)\tensor_W H^1(\DD)\right)^\Delta\right)/p^n
\end{align*}
so the first step in both cases is to compute 
$M=\left(H^1(\CC)\tensor_W
  H^1(\DD)\right)^\Delta$.

\subsection{A basis for $M$}
By Proposition~\ref{prop:CC-cohom}(3), $\mu_2$ acts as $-1$ on
$H^1(\CC)$ and $\mu_d$ acts on $e_i$ by $\chi^i$.  Thus $\mu_2$ acts
trivially on $H^1(\CC)\tensor_W H^1(\DD)$ and $\mu_d$ acts on
$e_i\tensor e_j$ by $\chi^{i-j}$.  Therefore we have
$$M\cong\bigoplus_{i\in\Z/d\Z\setminus\{0,d/2\}}W(e_i\tensor e_i).$$
We decompose $M=\oplus_{o\in O}M^o$ where
$$M^o=\bigoplus_{i\in o}W(e_i\tensor e_i).$$

For the rest of this section, we fix an orbit $o$ and \emph{we assume
  that $\gcd(o,d)<d/2$ and $p$ is balanced modulo $d/\gcd(o,d)$}.
By Theorem~\ref{thm:disc}, this is the situation in which
$E(\Fq(u)\tensor\Zp)^o\neq0$, and it turns out to be the
situation in which we can say something non-trivial about
$\sha(E/\Fq(u))^o$.

As a first step, we make a change of basis which is perhaps unnatural,
but has the virtue of simplifying the notation considerably.  Namely,
let $i\in o$ be the standard base point (see
Definition~\ref{def:base-pts}), and let
$$d_{ip^j}=\begin{cases}
c_{ip^j}&\text{if $w_j=l$}\\
c_{ip^j}/p&\text{ if $w_j=u$}
\end{cases}$$
where the $p$-adic integers $c_{ip^j}$ are defined in
Proposition~\ref{prop:CC-cohom}(4).  That proposition implies 
that the $d_{ip^j}$ are units.
Set $f_i=e_i\tensor e_i$, and for
$j=1,\dots,|o|-1$, set
$$f_{ip^j}=\left(\prod_{\ell=1}^jd_{pi^\ell}^{2}\right)e_{ip^j}\tensor e_{ip^j}.$$
Then $\{f_j|j\in o\}$ forms a $W$-basis of $M^o$, and it
follows from Proposition~\ref{prop:CC-cohom} parts (4) and (5) that for all
$j\in o$ we have
$$F(f_j)=\begin{cases}
p^2 f_{pj}&\text{if $j<d/2$}\\
f_{pi}&\text{if $j>d/2$.}
\end{cases}
$$
(Here as usual, we read the subscripts modulo $d$.)

Similarly, we have 
$$V(f_j)=\begin{cases}
f_{p^{-1}j}&\text{if $p^{-1}j<d/2$}\\
p^2 f_{p^{-1}j}&\text{if $p^{-1}j>d/2$}
\end{cases}
$$
where ``$p^{-1}j<d/2$'' means that the least positive residue of
$p^{-1}j$ is $<d/2$.

We have a remaining action of $G=\mu_d\sdp\gal(\Fq/\Fp)$ on $M$ via
its action on the first factor in $H^1(\CC)\tensor_W H^1(\DD)$.  Under
this action, $\zeta\in\mu_d$ acts $W$-linearly as
$[\zeta]f_j=\zeta^jf_j$ and $\Fr_p\in\gal(\Fq/\Fp)$ acts semi-linearly
as $\Fr_p(\alpha f_j)=\sigma(\alpha)f_j$.

\subsection{mod $p$ case with $d=p^f+1$ and
  $\Fq=\Fp(\mu_d)$}\label{ss:modp} 
As a very easy first case, we assume $d=p^f+1$ and $\Fq=\Fp(\mu_d)$,
and we compute $H_1$, which is just the subspace of $M/p$ killed by
$F$ and by $V$.  We saw just above that $F(f_i)$ is zero if and only
if $i<d/2$, i.e., if and only if the first letter in the word
associated to $i$ is $u$.  Similarly, $V(f_i)=0$ if and only if the
last letter of the word of $i$ is $l$.  This yields the first part of
the following statement.

\begin{prop}\label{prop:H1-via-crys}
 If $d=p^f+1$ and $\Fq=\Fp(\mu_d)$, then
$$H_1:=\left(H^1(\CC/\Fq)\tensor_W H^1(\DD/\Fq)\right)^{\Delta,F=V=0}$$
is spanned over $\Fq$ by the classes $f_i$ where the word of $i$ has
the form $u\cdots l$.  If the first half of the word of $o$ has the
form $u^{e_1}l^{e_2}\cdots u^{e_k}$ with each $e_i>0$, then the
$\Fq$-dimension of $H^o_1$ is $k$.  We have
$$\dim_{\Fq}H_1=\pfrac{p-1}2\pfrac{p^{f-1}+1}2.$$
\end{prop}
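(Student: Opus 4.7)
The plan is to address the three assertions in order. For the first, the formulas recorded just above the statement show that on $M^o$ modulo $p$, the map $F$ annihilates $f_i$ precisely when $i<d/2$, and $V$ annihilates $f_i$ precisely when $p^{-1}i>d/2$ (with $p^{-1}$ interpreted within the orbit). Since $d=p^f+1$ is even, negation interchanges $A$ and $B$, so by the definitions in Subsection~\ref{ss:invs-of-orbits} these two conditions say exactly that the first letter of the word based at $i$ is $u$ and the last letter is $l$. Combined with the decomposition $M=\oplus_{o\in O}M^o$, this gives the first claim of the Proposition.

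Next, I fix an orbit $o$ and count the indices $i\in o$ whose based-at-$i$ word has the shape $u\cdots l$. Because $d=p^f+1$, the orbit is complementary, so the word at the standard base point has the form $u^{e_1}l^{e_2}\cdots u^{e_k}l^{e_1}u^{e_2}\cdots l^{e_k}$ with $k$ odd and each $e_j>0$. Changing the base point from $i$ to $p^s i$ cyclically rotates this word, and a rotation begins with $u$ and ends with $l$ exactly when its first letter starts a maximal $u$-block of the cyclic word. The first half of the word contributes $(k+1)/2$ maximal $u$-blocks and the second half contributes $(k-1)/2$, for a total of $k$, whence $\dim_{\Fq}H_1^o=k$.

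Finally, summing over orbits gives
$$\dim_{\Fq}H_1=\left|\{i\in\Z/d\Z\setminus\{0,d/2\}:\,i\in A,\ p^{-1}i\in B\}\right|,$$
and the substitution $r=p^{-1}i$ rewrites this as $|\{r\in A:\,pr\bmod d\in B\}|$. The last condition is equivalent to $r$ lying in one of the disjoint open intervals
$$I_k=\left(\,(2k-1)\tfrac{d}{2p},\ k\tfrac{d}{p}\,\right),\qquad k=1,\ldots,(p-1)/2,$$
each of length $d/(2p)$ and all contained in $(0,d/2)$. The main remaining obstacle is to verify that, for $d=p^f+1$ and $p$ odd, the endpoints of each $I_k$ are non-integers and $I_k$ contains exactly $(p^{f-1}+1)/2$ integers; this is a short but careful arithmetic calculation using $p^f\equiv-1\pmod d$. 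Multiplying the $(p-1)/2$ intervals by the $(p^{f-1}+1)/2$ integers in each then yields the claimed formula.
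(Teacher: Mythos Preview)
Your argument for the first two assertions is correct and matches the paper's: the $F$/$V$ conditions on $f_i$ translate to the first and last letters of the word at $i$, and the count per orbit is the number of maximal $u$-blocks in the cyclic word, which is $k$. (The paper phrases the latter by listing the $k$ good rotations as $i,\,p^{e_1+e_2}i,\,\dots,\,p^{e_1+\cdots+e_{2k-2}}i$, but this is the same observation.)

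For the total dimension you take a genuinely different route from the paper. The paper defers the count to Lemma~\ref{lemma:counting}: it sets up a bijection between $S$ and $p$-ary digit strings, observes that the condition ``word has shape $u\cdots l$'' is equivalent to ``pattern begins $lu$'', and then counts such patterns digit by digit. Your approach is more direct and self-contained: you reduce to counting integers $r\in(0,d/2)$ with $pr\bmod d\in(d/2,d)$ and do this by interval arithmetic. Your method is lighter for this single count; the paper's digit framework pays off later, since Lemma~\ref{lemma:counting} is reused in the proof of Theorem~\ref{thm:main}(5).

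One small correction: the substitution $r=p^{-1}i$ turns $\{i\in A:\,p^{-1}i\in B\}$ into $\{r\in B:\,pr\in A\}$, not $\{r\in A:\,pr\in B\}$ as you wrote. The two sets have the same cardinality via $r\mapsto d-r$ (since $d$ is even), so your interval computation still applies, but you should either fix the substitution or insert this symmetry step explicitly. The endpoint and integer-count verification you allude to does go through: for $1\le k\le(p-1)/2$ one finds $\lfloor kd/p\rfloor=kp^{f-1}$ and $\lfloor(2k-1)d/(2p)\rfloor=\bigl((2k-1)p^{f-1}-1\bigr)/2$, giving $(p^{f-1}+1)/2$ integers in each $I_k$.
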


The dimension counts in the proposition will be proven at the end of
Subsection~\ref{ss:counting} after we have proven
Lemma~\ref{lemma:counting}.

\subsection{The basic equations}
We now make first reductions toward computing $H^o$ and $H^o_n$ in
general.  Focus on one orbit $o\in O$ with its standard base point $i$
and associated word $w=w_1\cdots w_{|o|}$.

Consider a typical element $c\in M^o$ (or in $M_n^o$):
$$c=\sum_{j=0}^{|o|-1}\alpha_jf_{ip^j}$$
where $\alpha_j\in W$ (or in $W_n$), and where we read the index $j$
modulo $|o|$.

Then the class $c$ satisfies $(F-p)(c)=0$ if and only if
$$p\alpha_{j+1}=\begin{cases}
\sigma(\alpha_j)&\text{if $w_j=l$}\\
p^2\sigma(\alpha_j)&\text{if $w_j=u$}
\end{cases}$$
for $j=0,\dots,|o|-1$.
Similarly, the class $c$ satisfies $(V-p)(c)=0$ if and only if 
$$p\alpha_j=\begin{cases}
p^2\sigma^{-1}(\alpha_{j+1})&\text{if $w_j=l$}\\
\sigma^{-1}(\alpha_{j+1})&\text{if $w_j=u$}
\end{cases}$$
for $j=0,\dots,|o|-1$.

Note that when $w_j=l$, the equation coming from $V-p=0$ follows from
that coming from $F-p=0$, and when $w_j=u$, then the equation coming
from $F-p=0$ follows from that coming from $V-p=0$.  Thus $c$
satisfies $(F-p)(c)=(V-p)(c)=0$ if and only if
\begin{equation}\begin{cases}
\alpha_j=\sigma^{-1}p\alpha_{j+1}&\text{if $w_j=l$}\\
\sigma p\alpha_j=\alpha_{j+1}&\text{if $w_j=u$}
\end{cases}
\end{equation}
for $j=0,\dots,|o|-1$.


Note that when $w_j=l$, $\alpha_{j+1}$ determines $\alpha_j$, and when
$w_j=u$, $\alpha_j$ determines $\alpha_{j+1}$.  Thus we may eliminate
many of the variables $\alpha_j$.
More precisely, write the word $w$ in exponential form:
$w=u^{e_1}l^{e_2}\cdots l^{e_{2k}}$.  Setting $\beta_0=\alpha_0$ and
$$\beta_j=\alpha_{e_1+e_2+\cdots+e_{2j}}$$
for $1\le j\le k$ (so that $\beta_k=\beta_0$), the class $c$ is entirely
determined by the $\beta$'s.  Indeed, for $\sum_{i=1}^{2j}e_i\le \ell\le
\sum_{i=1}^{2j+1}e_i$, we have
$$\alpha_\ell=(\sigma p)^{\ell-\sum_{i=1}^{2j}e_i}\beta_j$$
and for $\sum_{i=1}^{2j+1}e_i\le\ell\le\sum_{i=1}^{2j+2}e_i$, we have
$$\alpha_\ell=(\sigma^{-1}p)^{\sum_{i=1}^{2j+2}e_i-\ell}\beta_{j+1}.$$

The conditions on the $\alpha$'s translated to the $\beta$'s become
\begin{align}\label{eq:basic}
(\sigma p)^{e_1}\beta_0&=(\sigma^{-1}p)^{e_2}\beta_1\notag\\
(\sigma p)^{e_3}\beta_1&=(\sigma^{-1}p)^{e_4}\beta_2\notag\\
&\vdots\\
(\sigma p)^{e_{2k-1}}\beta_{k-1}&=(\sigma^{-1}p)^{e_{2k}}\beta_k\notag
\end{align}

We refer to these as the \emph{basic equations}.  

The upshot is that the coordinates $\beta$ define an embedding
$H^o\into W^k$ (resp.~$H^o_n\into W_n^k$) with $c\mapsto
(\beta_j)_{j=1,\dots,k}$ whose image is characterized by the basic
equations.  

In the rest of this section, we will make this image more
explicit in the ``adic case'' $H^o\into W^k$ and the ``mod $p^n$
case'' $H^o_n\into W_n^k$.

\subsection{adic case}
In this case, the  $\beta_j$ lie in $W$ which is torsion free,  so
the basic equations allow us to eliminate all $\beta_j$ with $0<j<k$
in favor of $\beta_0$.  Indeed, the basic equations imply that
\begin{align}\label{eq:eliminate}
\beta_1&=\sigma^{e_1+e_2}p^{e_{1,2}}\beta_0\notag\\
\beta_2&=\sigma^{e_3+e_4}p^{e_3-e_4}\beta_1
     =\sigma^{e_1+\cdots+e_4}p^{e_{1,4}}\beta_0\notag\\
&\vdots\\
\beta_k&=\sigma^{e_1+\cdots+e_{2k}}p^{e_{1,2k}}\beta_0
     =\sigma^{|o|}p^{e_{1,2k}}\beta_0=\sigma^{|o|}\beta_0\notag
\end{align}
where as usual, $e_{ij}$ denotes the alternating sum
$$e_{ij}=e_i-e_{i+1}+\cdots\pm e_j.$$
 
Note that $\beta_k=\beta_0$, so the last equation is satisfied if and
only if $\beta_0\in W(\F_{p^{|o|}})$.  Note also that since $i$ is a
good base point, the $e_{1j}$ are
$\ge0$ for $1\le j\le 2k$, so the exponents of $p$ on the far
right hand sides of the equations above are non-negative.  Therefore,
for any choice of $\beta_0\in W(\F_{p^{|o|}})$, the equations give
well-defined elements $\beta_j\in W(\F_{p^{|o|}})\subset W$ solving
the basic equations.

The upshot is that the map sending $c\mapsto\beta_0=\alpha_0$ gives an
isomorphism $H^o\cong W(\F_{p^{|o|}})=\Gamma_o$.  The inverse of this
map is
\begin{equation*}
\alpha_0\mapsto \sum_{j=0}^{|o|-1}\sigma^jp^{a_j}\alpha_0f_{ip^j}
\end{equation*}
where $a_j$ is the function defined in
Subsection~\ref{ss:invs-of-orbits}.  It is easy to see that this map
is equivariant for the action of $G=\mu_d\sdp\gal(\Fq/\Fp)$ where $G$
acts on $W(\F_{p^{|o|}})\cong\Gamma_o$ as in
Proposition~\ref{prop:G-reps}. 

Summing up:
\begin{prop}\label{prop:H^o}  
  Suppose that $o\in O_{d,p}$ is an orbit with $\gcd(d,o)<d/2$ and $p$
  is balanced modulo $p$.  Then the map above induces an isomorphism
  of $\Zp[G]$-modules
$$H^o\cong\Gamma_o.$$
\end{prop}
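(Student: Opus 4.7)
The plan is to verify that the map $c=\sum_j\alpha_jf_{ip^j}\mapsto\alpha_0$ and the candidate inverse $\alpha_0\mapsto\sum_{j=0}^{|o|-1}\sigma^jp^{a_j}\alpha_0\,f_{ip^j}$ displayed above are mutually inverse, well-defined $\Zp[G]$-module maps. I will proceed in three steps: injectivity with image in $W(\F_{p^{|o|}})$; surjectivity via the explicit inverse; and $G$-equivariance.

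For the first step, starting from the basic equations, the derivation \eqref{eq:eliminate} eliminates $\beta_1,\dots,\beta_{k-1}$ in favor of $\beta_0=\alpha_0$, giving $\beta_j=\sigma^{e_1+\cdots+e_{2j}}p^{e_{1,2j}}\beta_0$. Since $i$ is a good base point, each exponent $e_{1,2j}\ge0$, so these formulas remain inside $W$; the fact that every intermediate $\alpha_\ell$ is then also determined by $\beta_0$ yields injectivity. The closure equation at $j=k$ reduces to $\sigma^{|o|}\beta_0=\beta_0$, using $e_{1,2k}=0$ (a consequence of balancedness), so $\alpha_0$ must lie in $W(\F_{p^{|o|}})$.

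For surjectivity, given $\alpha_0\in W(\F_{p^{|o|}})$ I set $\alpha_j=\sigma^jp^{a_j}\alpha_0$ and verify the basic equations directly. Using the defining recursion $a_{j+1}-a_j=+1$ if $w_j=u$ and $-1$ if $w_j=l$, when $w_j=u$ one computes $\sigma p\,\alpha_j=\sigma^{j+1}p^{a_j+1}\alpha_0=\sigma^{j+1}p^{a_{j+1}}\alpha_0=\alpha_{j+1}$, and the case $w_j=l$ is symmetric. Goodness of the base point ensures $a_j\ge0$ throughout, so each $\alpha_j\in W$; balancedness gives $a_{|o|}=0$, and the cycle closes since $\sigma^{|o|}$ fixes $\alpha_0$.

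Finally, $G$-equivariance splits into two almost tautological checks: $[\zeta]f_{ip^j}=\zeta^{ip^j}f_{ip^j}$, so on the image coordinate $\alpha_0$ the element $\zeta\in\mu_d$ acts by multiplication by $\zeta^i$, matching the action on $\Gamma_o$ recorded in Proposition~\ref{prop:G-reps}(5); and since the $f_{ip^j}$ descend from $H^1_{crys}(\CC/\Zp)\tensor H^1_{crys}(\DD/\Zp)$ they are fixed by $\Fr_p$, so $\Fr_p$ acts on the coordinate $\alpha_0$ by $\sigma$, which on $W(\F_{p^{|o|}})$ is precisely the Witt-vector Frobenius. I expect no serious obstacle here: the substantive input---goodness of the base point giving $a_j\ge0$ and balancedness giving $a_{|o|}=0$---has already been extracted from the orbit $o$, so what remains is careful index bookkeeping to match the normalization of $\Gamma_o$ in Proposition~\ref{prop:G-reps}.
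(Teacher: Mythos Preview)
Your proposal is correct and follows essentially the same line as the paper's own argument in the ``adic case'' subsection: eliminate all coordinates in favor of $\beta_0=\alpha_0$ via \eqref{eq:eliminate}, use goodness of the base point to ensure the exponents $e_{1,2j}\ge0$ so the formulas stay in $W$, observe that the closure condition forces $\alpha_0\in W(\F_{p^{|o|}})$, and then verify that the displayed inverse is $G$-equivariant using the description of the $G$-action on the $f_{ip^j}$. There is nothing to add.
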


\subsection{mod $p^n$ case}
To compute $H^o_n$, we should solve the basic equations
\eqref{eq:basic} with the $\beta_j\in W_n$.  We will do this for all
sufficiently large $n$ (to be made precise just below).  We write
$\beta_j^{(\nu)}$ for the Witt-vector components of $\beta_j$, and by
convention we set $\beta_j^{(\nu)}=0$ if $\nu\le0$.

Recall that the \emph{height} of an orbit with word
$u^{e_1}l^{e_2}\cdots l^{e_{2k}}$ is
$$ht(o)=\max\{e_1, e_{13}, \dots,e_{1,2k-1}\}.$$
In other words, $ht(o)$ is the maximum value of the sequence $a_j$
associated to $o$ in Subsection~\ref{ss:more-invs}.  For the rest of
this section \emph{we assume that $n\ge ht(o)$}.

Taking the $\nu$-th Witt component in the basic equations
\eqref{eq:basic} yields the following system of equations in $\Fq$:
\begin{align}\label{eq:basic-n}
\sigma^{2e_1}\beta_0^{(\nu-e_1)}&=\beta_1^{(\nu-e_2)}\notag\\
\sigma^{2e_3}\beta_1^{(\nu-e_3)}&=\beta_2^{(\nu-e_4)}\notag\\
&\vdots\\
\sigma^{2e_{2k-1}}\beta_{k-1}^{(\nu-e_{2k-1})}&=\beta_k^{(\nu-e_{2k})}.\notag
\end{align}
Now suppose that $\nu\le n-ht(o)$, so that $\nu+e_1\le n$,
$\nu+e_{13}\le n$, etc.  Considering the $\nu+e_1$ component of the
first equation in display~\eqref{eq:basic-n}, the $\nu+e_{13}$
component of the second equation, etc., leads to the chain of
equalities
$$\beta_0^{(\nu)}=\sigma^{-2e_1}\beta_1^{(\nu-e_{12})}=
\sigma^{-2(e_1+e_3)}\beta_2^{(\nu-e_{14})}=\cdots=
\sigma^{-2(e_1+e_3+\cdots+e_{2k-1})}\beta_0^{(\nu-e_{1,2k})}
=\sigma^{-|o|}\beta_0^{(\nu)}.$$
It follows that for $\nu\le n-ht(o)$, $\beta_0^{(\nu)}$ lies in
$\F_{p^{|o|}}$.  

Conversely, given Witt components $\beta_0^{(\nu)}\in\F_{p^{|o|}}$ for
$\nu\le n-ht(o)$, there exists a solution
$(\beta_0,\dots,\beta_{k-1})\in W_n^k$ of the basic equations with the
given components.  Indeed, we may complete $\beta_0$ to an element of
$W$, use the equations~\eqref{eq:eliminate} to define the
other $\beta_j$, and then reduce modulo $p^n$.

Thus the map $(\beta_0,\dots,\beta_{k-1})\mapsto
\beta_0\pmod{p^{n-ht(o)}}$ defines a surjective homomorphism 
\begin{equation}\label{eq:reduction}
H^o_n\to W_{n-ht(o)}(\F_{p^{|o|}})
\end{equation}
whose kernel is easily seen to be $p^{n-ht(o)}H^o_n$.  Note that if
$n_2\ge n_1\ge ht(o)$, we have an isomorphism
$$p^{n_1-ht(o)}H^o_{n_1}\cong p^{n_2-ht(o)}H^o_{n_2}$$
which sends $(\beta_j)$ to $p^{n_2-n_1}(\beta_j)$.  In this sense, the
kernel of the surjection~\eqref{eq:reduction} is independent of $n$
(as long as $n\ge ht(o)$).  Thus to compute it, we may assume that
$n=ht(o)$ and compute $H^o_{ht(o)}$.

Next we note that if $(\beta_j)\in H^o_{ht(o)}$ and if $\ell$ is such
that $ht(o)=e_{1,2\ell+1}=e_{2\ell+2,2k}$, then 
\begin{align*}
0=p^{ht(o)}\beta_k&=p^{e_{2\ell+2,2k}}\beta_k\\
&=p^{e_{2\ell+2,2k-2}}\beta_{k-1}\\
&\vdots\\
&=p^{e_{2\ell+2}}\beta_{\ell+1}
\end{align*}
Thus after reordering we may write the basic equations as a triangular system:
\begin{align*}
(\sigma p)^{e_{2\ell+3}}\beta_{\ell+1}&=(\sigma^{-1}p)^{e_{2\ell+4}}\beta_{\ell+2}\\
&\vdots\\
(\sigma p)^{e_{2k-1}}\beta_{k-1}&=(\sigma^{-1}p)^{e_{2k}}\beta_{k}\\
(\sigma p)^{e_{1}}\beta_{k}&=(\sigma^{-1}p)^{e_{2}}\beta_{1}\\
&\vdots\\
(\sigma p)^{e_{2\ell-1}}\beta_{\ell-1}&=(\sigma^{-1}p)^{e_{2\ell}}\beta_\ell\\
(\sigma p)^{e_{2\ell+1}}\beta_{\ell}&=0.
\end{align*}

Now introduce new variables $\gamma_j$ indexed by $j\in \Z/k\Z$ and
related to the $\beta_j$ by
$$\gamma_{j-\ell}=
\begin{cases}
\sigma^{-e_1-e_2-\cdots-e_{2j}}\beta_j&\text{if $1\le j\le \ell$}\\
\sigma^{e_{2j+1}+e_{2j+2}+\cdots+e_{2k}}\beta_j&\text{if $\ell+1\le j\le k.$}
\end{cases}$$

In these variables, the basic equations become
\begin{align*}
p^{e_{2\ell+3}}\gamma_{1}&=p^{e_{2\ell+4}}\gamma_{2}\\
&\vdots\\
p^{e_{2k-1}}\gamma_{k-\ell-1}&=p^{e_{2k}}\gamma_{k-\ell}\\
p^{e_{1}}\gamma_{k-\ell}&=p^{e_{2}}\gamma_{k+1-\ell}\\
&\vdots\\
p^{e_{2\ell-1}}\gamma_{k-1}&=p^{e_{2\ell}}\gamma_k\\
p^{e_{2\ell+1}}\gamma_{k}&=0.
\end{align*}
or, in matrix form
$$B(e_{2\ell+3},\dots,e_{2k},e_1,\dots,e_{2\ell+1})
\begin{pmatrix}
\gamma_{1}\\ \gamma_{2}\\ \vdots\\ \gamma_k
\end{pmatrix}=0.$$

The upshot is that we have identified $H^o_{ht(o)}$ with the kernel of
$B(e_{2\ell+3},\dots,e_{2\ell+1})$ on $W_{ht(o)}^k$.  By
Remark~\ref{rem:equivalent-Bs}, this is the same as the kernel of
$B(e_1,\dots,e_{2k+1})$, and this kernel is described by the invariant
factors $d_j$ analyzed in Subsection~\ref{ss:computing-invariants}.

To finish the discussion, we will unwind the action of
$G=\mu_d\sdp\gal(\Fq/\Fp)$ under the isomorphisms above.  The action
of $\Fr_p$ on a class $c\in H^o_n$ goes over to the action of $\sigma$
on the coordinates $\alpha_j$ and also on the coordinates $\beta_j$
and $\gamma_j$.  The action of $\zeta\in\mu_d$ on $c$ goes over to
multiplication by $\zeta^{ip^j}$ on $\alpha_j$, so to multiplication
by $\zeta^{ip^{e_1+e_2+\cdots+e_{2j}}}$ on $\beta_j$, and finally to
multiplication by $\zeta^i$ on the $\gamma_j$.

The following statement summarizes the results of this subsection.

\begin{prop}\label{prop:H^o_n}
  Suppose that $o\in O_{d,p}$ is an orbit with $\gcd(d,o)<d/2$ and $p$
  is balanced modulo $p$.  Suppose that the word of $o$ is
  $u^{e_1}\cdots l^{e_{2k}}$ and recall the invariants $d_1,\dots,d_k$
    attached to $o$ in Subsection~\ref{ss:more-invs}.
 \begin{enumerate}
 \item For all $n\ge ht(o)$ we have an exact sequence of
   $\Zp[G]$-modules
$$0\to\bigoplus_{j=1}^kW_{d_j}(\Fq)\to H^o_n\to
W_{n-ht(o)}(\F_{p^{|o|}})\to0.$$
Here $G$ acts on the Witt vectors as described in
Proposition~\ref{prop:G-reps}(5).  
\item The cokernel of $H^o/p^n\to H^o_n$ is isomorphic to
$$\frac{\bigoplus_{j=1}^kW_{d_j}(\Fq)}{W_{d_k}(\F_{p^{|o|}})}.$$
\end{enumerate}
\end{prop}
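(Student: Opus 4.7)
My plan for part (1) is to assemble the pieces already in place before the proposition. The discussion constructs an explicit surjection $H^o_n\onto W_{n-ht(o)}(\F_{p^{|o|}})$ sending $(\beta_j)\mapsto\beta_0\bmod p^{n-ht(o)}$, whose kernel $p^{n-ht(o)}H^o_n$ is isomorphic to $H^o_{ht(o)}$; the latter has been identified with $\ker B$ for $B=B(e_{2\ell+3},\ldots,e_{2\ell+1})$ acting on $W_{ht(o)}^k$. By Remark~\ref{rem:equivalent-Bs}, this matrix has the same invariant factors $d_1\le\cdots\le d_k$ as $B(e_1,\ldots,e_{2k-1})$. Since $B$ has entries in $\Zp$ and $G$ acts identically on each $\gamma_j$-coordinate (with $\mu_d$ acting by the scalar $\chi^i$ and $\Fr_p$ by $\sigma$), the row and column operations producing the Smith normal form commute with the $G$-action, so I obtain $\ker B\cong\bigoplus_{j=1}^k W_{d_j}(\Fq)$ as $\Zp[G]$-modules.

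For part (2), my plan is to apply the snake lemma. By Proposition~\ref{prop:H^o}, $H^o/p^n\cong W_n(\F_{p^{|o|}})$, and the map $H^o/p^n\to H^o_n$ is injective, since the $\beta_0$-coordinate of an element of $H^o$ reduces faithfully mod $p^n$. Its composition with the surjection $H^o_n\onto W_{n-ht(o)}(\F_{p^{|o|}})$ is the natural reduction $W_n(\F_{p^{|o|}})\onto W_{n-ht(o)}(\F_{p^{|o|}})$, with kernel $W_{ht(o)}(\F_{p^{|o|}})$. The snake lemma then identifies $\coker(H^o/p^n\to H^o_n)$ with the cokernel of the induced map
$$W_{ht(o)}(\F_{p^{|o|}})\longrightarrow\bigoplus_{j=1}^k W_{d_j}(\Fq)$$
on kernels.

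The key technical step will be identifying this last map as the natural inclusion $W_{ht(o)}(\F_{p^{|o|}})\into W_{d_k}(\Fq)=W_{ht(o)}(\Fq)$ into the top summand. An element $\beta_0=p^{n-ht(o)}x'$ with $x'\in W(\F_{p^{|o|}})$ produces, via the explicit formulas, a class with $\gamma_{k-\ell}=p^{n-ht(o)}x'$ and every other $\gamma_{j-\ell}$ equal to a unit times a strictly higher power of $p$ times $x'$. After dividing by $p^{n-ht(o)}$ to pass to $H^o_{ht(o)}\subset W_{ht(o)}^k$ and applying the column operations from the proof of Lemma~\ref{lemma:inv-factors2}---which begin by splitting off the $(p^{d_k})$ block---these higher-order terms cancel exactly, leaving the image inside the $W_{d_k}$-summand as the natural inclusion. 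The cokernel becomes $\bigoplus_j W_{d_j}(\Fq)/W_{d_k}(\F_{p^{|o|}})$, as claimed. The main obstacle will be verifying this cancellation explicitly; a useful consistency check is that both sides have the same order $p^{|o|([\Fq:\F_{p^{|o|}}]\sum_j d_j - ht(o))}$.
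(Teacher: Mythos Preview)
Your approach is correct and essentially identical to the paper's. Part~(1) is exactly what the paper does in the subsection preceding the proposition, and your snake-lemma argument for part~(2) is precisely the unpacking of the paper's one-sentence proof (which simply notes that the composite $H^o/p^n\to H^o_n\to W_{n-ht(o)}(\F_{p^{|o|}})$ is surjective with kernel $p^{n-ht(o)}H^o/p^nH^o\cong W_{d_k}(\F_{p^{|o|}})$).

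Your ``main obstacle''---tracking the image through the Smith-normal-form column operations to show it lands in the top summand---is more work than is needed. The quotient is well-defined independent of the embedding: since $W_{d_k}(\F_{p^{|o|}})\cong\Gamma_o/p^{d_k}$ is a cyclic $\Zp[G]$-module and $\Gamma_o/p$ is irreducible, the image of its generator in $\bigoplus_j W_{d_j}(\Fq)\cong\bigoplus_j(\Gamma_o/p^{d_j})^m$ must have some coordinate in a $\Gamma_o/p^{d_k}$-block that is nonzero mod~$p$, hence a generator. One then clears all other coordinates by an automorphism of the target (using the natural surjections $\Gamma_o/p^{d_k}\to\Gamma_o/p^{d_j}$ for $d_j\le d_k$), so every injective $\Zp[G]$-map $W_{d_k}(\F_{p^{|o|}})\hookrightarrow\bigoplus_j W_{d_j}(\Fq)$ has the same cokernel. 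The paper does not spell this out either; it simply writes the quotient and relies on this (or on the notation meaning ``quotient by the subgroup arising from the construction'').
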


The first part was proven earlier in this subsection.  The second
follows from the fact that the composed map $H^o/p^n\to H^o_n\to
W_{n-ht(o)}(\F_{p^{|o|}})$ (cf.~equation~\eqref{eq:reduction}) is
obviously surjective, with kernel $p^{n-ht(o)}H^o/p^nH^o$ and
$d_k=ht(o)$.

\begin{rem}
The ``d\'evissage'' implicit in this subsection is captured by the
middle column of the following diagram with exact rows and columns:
$$\xymatrix{
&0\ar[d]&0\ar[d]\\
0\ar[r]&\frac{p^{n-ht(o)}H^o}{p^nH^o}\ar[r]\ar[d]&p^{n-ht(o)}H^o_n\ar[r]\ar[d]
&\Br^o[p^n]\ar[r]\ar@{=}[d]&0\\
0\ar[r]&\frac{H^o}{p^nH^o}\ar[r]\ar[d]&H^o_n\ar[r]\ar[d]
&\Br^o[p^n]\ar[r]&0\rlap{.}\\
&\frac{H^o}{p^{n-ht(o)}H^o}\ar[d]\ar[r]^{\cong}&\frac{H^o_n}{p^{n-ht(o)}H^0_n}\ar[d]\\
&0&0}$$
Here $\Br^o[p^n]$ is the $p^n$-torsion in $\Br(\CC\times_\Fq\DD)^{\Delta,o}$
and the middle row is the $o$ part of the exact sequence in
Theorem~\ref{thm:cohom-of-product}(2).  The middle column is the $o$
part of the exact sequence of \cite{Artin74} on page 553 just after
equation (3.2) and \cite[p.~521, line 6]{Milne75}, i.e.,
$U^2(p^\infty)=U^2(p^{ht(o)})=p^{n-ht(o)}H^o_n$ and
$D^2(p^{n-ht(o)})=H^o_n/p^{n-ht(o)}H^o_n$.  Note also that the top row
above shows that $\Br$ is not represented by an algebraic group, even
as a functor on finite fields.
\end{rem}

\section{Proofs of the main results}\label{s:proofs}
In this section we prove an easy lemma on counting words then
assemble the results from Sections~\ref{s:DPC},
\ref{s:arithmetic-of-CxD}, and \ref{s:exercises} to prove the theorems
stated in Sections~\ref{s:intro} and \ref{s:refined-results}.

\subsection{Counting patterns}\label{ss:counting}
Let $f$ be a positive integer, let $d=p^f+1$, and let
$S=\Z/d\Z\setminus\{0,d/2\}$.  Let $\<p\>\subset(\Z/d\Z)^\times$ be
the cyclic subgroup generated by $p$.  Given $i\in S$ we define a
string $w$ of length $f$ in the alphabet $\{u,l\}$, called the
\emph{pattern} associated to $i$, as follows: $w=w_1\cdots w_f$ where
$$w_j=\begin{cases}
l&\text{if $-p^{j-1}i\in A$}\\
u&\text{if $-p^{j-1}i\in B$}.
\end{cases}$$ 
If the orbit $o$ of $\<p\>$ through $i$ has full size (i.e., size
$2f$), then the pattern of $i$ is the same thing as the first half of
the word associated to $i$.  If the orbit is smaller, then the pattern
is a repetition of the $\lfloor f/|o|\rfloor$ copies of the word,
followed by the first half of the word.  (Note that $f/|o|$ always has
denominator 2 because the second half of the word is the complement of
the first.)
For example, if $p=f=3$ and $i=7$, then $o=\{7,21\}$, the associated
word is $ul$, and the pattern is $ulu$.  Patterns turn out to be more
convenient than words for counting.

Let $T$ be the set of tuples
$$T=\left\{\left.(i_1,\dots,i_f)\,\right|\, i_j\in\{0,\dots,p-1\},\
  \text{not all $i_j=(p-1)/2$}\right\}.$$
There is a bijection $T\to S$ which sends 
$$(i_1,\dots,i_f)\mapsto \left(1+\sum_{j=1}^f i_jp^{j-1}\right).$$
If $i$ corresponds to $(i_1,\dots,i_f)$, then $pi$ corresponds to
$(p-1-i_f,i_1,\dots,i_{f-1})$.  

The first letter of the pattern of $i$ is $u$ if and only if the first
element of the sequence $i_f,i_{f-1},\dots$ which is not equal to
$(p-1)/2$ is in fact $<(p-1)/2$.  More generally, if we have a word
$w=u^{e_1}l^{e_2}\cdots u^{e_k}$ where $k$ is odd, each $e_j>0$, and
$\sum e_j=f$,
then $i\in S$ has pattern $w$ if and only the following inequalities
are satisfied:
\begin{align*}
i_f\le(p-1)/2,&\quad i_{f-1}\le(p-1)/2,\ \dots\  \\
&\hfill i_{f-e_1+2}\le(p-1)/2,\quad i_{f-e_1+1}<(p-1)/2\\ \\
i_{f-e_1}\ge(p-1)/2,&\quad i_{f-e_1-1}\ge(p-1)/2,\ \dots\  \\
&\hfill i_{f-e_1-e_2+2}\ge(p-1)/2,\quad i_{f-e_1-e_2+1}>(p-1)/2\\
\vdots\\
i_{f-e_1-\cdots-e_{k-1}}\le(p-1)/2,&\quad
i_{f-e_1-\cdots-e_{k-1}-1}\le(p-1)/2,\ \dots\  \\
&\hfill i_{f-e_1-\cdots-e_k+2}\le(p-1)/2,\quad i_{f-e_1-\cdots-e_k+1}<(p-1)/2
\end{align*}

This leads to the following counts.

\begin{lemma}\label{lemma:counting}
\mbox{}\begin{enumerate}
\item Suppose $k>0$ is odd and $e_1,\dots,e_k$ are positive integers
  with $\sum e_j=f$.  Then the number of elements $i\in S$ whose
  pattern is $w=u^{e_1}l^{e_2}\cdots u^{e_k}$ is 
$$\pfrac{p-1}2^k\pfrac{p+1}2^{f-k}.$$
\item The number of $i\in S$ whose pattern starts $lu\cdots$ is
$$\pfrac{p-1}2\pfrac{p^{f-1}+1}2$$
and the number of $i\in S$ whose pattern starts $ll\cdots$ is 
$$\pfrac{p+1}2\pfrac{p^{f-1}-1}2$$
\end{enumerate}
\end{lemma}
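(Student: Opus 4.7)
The plan is to count tuples $(i_1,\ldots,i_f)\in T$ directly under the bijection $T\to S$, using the explicit inequality description of the pattern map given just before the lemma statement; part (1) then falls out essentially immediately. Indeed, the displayed inequalities encode ``pattern of $i$ equals $u^{e_1}l^{e_2}\cdots u^{e_k}$'' as a conjunction of constraints on each of the $f$ coordinates $i_1,\ldots,i_f$: within the $r$-th block (of length $e_r$) one finds $e_r-1$ weak inequalities of the form $i_j\le(p-1)/2$ or $i_j\ge(p-1)/2$ (each allowing $(p+1)/2$ admissible values) and exactly one strict inequality (allowing $(p-1)/2$ values). Since the blocks partition the coordinates, multiplying gives $((p-1)/2)^k((p+1)/2)^{f-k}$.

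For part (2), I would run a case analysis on the top digit $i_f$, exploiting the formula that $pi$ corresponds to $(p-1-i_f,i_1,\ldots,i_{f-1})$. Let $\nu=\max\{j:i_j\ne(p-1)/2\}$, which exists because $T$ excludes the all-$(p-1)/2$ tuple; then $w_1=l$ iff $i_\nu>(p-1)/2$, while $w_2$ is read off from the top-to-bottom sequence $i_{f-1},i_{f-2},\ldots,i_1,p-1-i_f$. The key observation is that when $i_f=(p-1)/2$ the wraparound entry $p-1-i_f$ also equals $(p-1)/2$, so $w_2$ is controlled by the same index $\nu$ as $w_1$ and hence $w_1=w_2$; when $i_f\ne(p-1)/2$, $w_2$ is determined either by $j^*:=\max\{j<f:i_j\ne(p-1)/2\}$ (if such a $j$ exists) or by the sign of $p-1-i_f$ (if $i_1=\cdots=i_{f-1}=(p-1)/2$).

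For $N_{lu}$ only $i_f\ne(p-1)/2$ can occur, and $w_1=l$ forces $i_f\ge(p+1)/2$, giving $(p-1)/2$ choices for $i_f$; the condition $w_2=u$ is then automatic in the all-$(p-1)/2$ subcase and requires $i_{j^*}\le(p-3)/2$ otherwise. Stratifying by $j^*$ yields $1+\sum_{j^*=1}^{f-1}((p-1)/2)p^{j^*-1}=(p^{f-1}+1)/2$, hence the stated formula. For $N_{ll}$, the $i_f=(p-1)/2$ contribution is $\sum_{\nu=1}^{f-1}((p-1)/2)p^{\nu-1}=(p^{f-1}-1)/2$ (here $p-1-i_f=(p-1)/2$ can no longer serve as the deciding comparison), while the $i_f\ge(p+1)/2$ contribution requires $j^*$ to exist with $i_{j^*}\ge(p+1)/2$ and equals $((p-1)/2)\cdot(p^{f-1}-1)/2$; their sum factors as $((p^{f-1}-1)/2)((p-1)/2+1)=((p+1)/2)(p^{f-1}-1)/2$.

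The main obstacle is simply keeping the bookkeeping of the ``first-different-from-$(p-1)/2$'' index clean across the cases; no single step is deep, but sloppy treatment of the all-equal subcase (where the wraparound entry $p-1-i_f$ either takes over or is neutralized) would easily produce off-by-one errors in the final sums.
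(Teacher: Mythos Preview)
Your argument is correct and follows essentially the same approach as the paper. Part~(1) is identical; for part~(2), your computation of $N_{lu}$ via stratification by $j^*$ is exactly the paper's disjunction over which of $i_{f-1},i_{f-2},\dots$ is the first to differ from $(p-1)/2$ (with the all-equal case supplying the ``$+1$'' in the geometric sum). The only real difference is that for $N_{ll}$ the paper simply subtracts $N_{lu}$ from the obvious count $(p^f-1)/2$ of patterns starting with $l$, whereas you carry out a second direct case split; your version is slightly longer but equally valid.
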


\begin{proof}
  Part (1) follows immediately from the inequalities just before the
  lemma.  Part (2) is similar:  The pattern of $i$ starts $lu\dots$ if
  and only if $i_f>(p-1)/2$ and 
\begin{align*}
i_{f-1}&<(p-1)/2\\
\text{or}\quad i_{f-1}&=(p-1)/2\quad\text{and}\quad i_{f-2}<(p-1)/2\\
\text{or}\quad i_{f-1}&=i_{f-2}=(p-1)/2\quad\text{and}\quad i_{f-3}<(p-1)/2\\
&\vdots
\end{align*}
The number of such $i$ is 
$$\pfrac{p-1}2\left(\pfrac{p-1}2p^{f-2}+\cdots+\pfrac{p-1}2+1\right)
=\pfrac{p-1}2\pfrac{p^{f-1}+1}2.$$
Since the number of $i$ whose pattern starts with $l$ is clearly
$(p^f-1)/2$, the result for $ll$ follows by subtracting.
\end{proof}

\begin{proof}[End of the proof of Proposition~\ref{prop:H1-via-crys}]
  We saw above that the $\Fq$-dimension of $H^o_1$ is the number $i\in
  o$ whose word has the form $u\cdots l$, i.e., begins with $u$ and
  ends with $l$.  If the word associated to the standard base point in
  $o$ is $u^{e_1}l^{e_2}\cdots l^{e_{2k}}$ with $e_{i+k}=e_i$, then
  there are exactly $k$ elements $i\in o$ whose word has the form
  $u\cdots l$; if $i$ is the standard base point, they are 
$$i,p^{e_1+e_2}i,\dots,p^{e_1+\cdots+e_{2k-2}}i.$$

To compute the $\Fq$-dimension of $H_1$, we need only note that the
number of $i\in S$ whose word has the form $u\cdots l$ is the same as
the number of $i$ whose pattern starts $lu\cdots$.  Thus part
(2) of Lemma~\ref{lemma:counting} finishes the proof.
\end{proof}

\subsection{Proof of Theorems~\ref{thm:disc} and \ref{thm:index}} 
We now give the proofs of our results on the $o$-part of the
Mordell-Weil group $E(K_d)$.  We proved in \cite{ConceicaoHallUlmer14}
that $(E(K_d)\tensor\Zp)^o=0$ unless $o$ is an orbit with
$\gcd(o,d)<d/2$ and $p$ is balanced modulo $d/\gcd(d,o)$, so we make
those hypotheses for the rest of the subsection.

The first step is to note that Theorem~\ref{thm:reduction-to-S}(1) and
Theorem~\ref{thm:cohom-of-product}(1) imply that
$$\left(E(K_d)\tensor\Zp\right)^o\cong \left(H^1(\CC)\tensor_W
  H^1(\DD)\right)^{\Delta,o,F=p}.$$
This last group is denoted $H^o$ in Section~\ref{s:exercises}, where
we proved an isomorphism $H^o\cong\Gamma_o$.  

In order to prove the theorems, we need to consider $H^o$ as a
submodule of 
$$M^o:=\left(H^1(\CC)\tensor_W H^1(\DD)\right)^{\Delta,o}.$$
This is a free $W$-module on which the cup product induces a
perfect pairing.  The restriction of that pairing to $H^o$ corresponds
to the height pairing on $E(K_d)$, so to compute the discriminant of
the latter, it suffices to know the index of the $W$-submodule of
$M^o$ generated by $H^o$.  More precisely, the discriminant is $p^{2a}$
where 
$$a=\len_W\left(M^o/W H^o\right).$$
 
We saw above that $f_i,f_{pi},\dots,f_{p^{|o|-1}i}$ is a $W$-basis of
$M^o$.  Let $\eta_1,\eta_2,\dots,\eta_{|o|}$ be a $\Zp$-basis of $W$.
Then the classes
$$c_\ell=\sum_{j=0}^{|o|-1}p^{a_j}\sigma^j(\eta_\ell)f_{ip^j}\qquad\ell=1,\dots,|o|$$ 
form a $\Zp$-basis of $H^o$.  Here $j\mapsto a_j$ is the function
associated to $o$ in Subsection~\ref{ss:invs-of-orbits}.

In matrix form we have
$$\begin{pmatrix}c_1\\ \vdots \\ c_{|o|}\end{pmatrix}=
\begin{pmatrix}\sigma^0(\eta_1)&\sigma^1(\eta_1)&\dots&\sigma^{|o|-1}(\eta_1)\\
\sigma^0(\eta_2)&\sigma^1(\eta_2)&\dots&\sigma^{|o|-1}(\eta_2)\\
\vdots&\vdots&\ddots&\vdots\\
\sigma^0(\eta_{|o|})&\sigma^1(\eta_{|o|})&\dots&\sigma^{|o|-1}(\eta_{|o|})\end{pmatrix}
\begin{pmatrix}
p^{a_1}&0&\dots&0\\
0&p^{a_2}&\dots&0\\
\vdots&\vdots&\ddots&\vdots\\
0&0&\dots&p^{a_{|o|}}\end{pmatrix}
\begin{pmatrix}
f_i\\f_{pi}\\ \vdots\\ f_{p^{|o|-1}i}
\end{pmatrix}.$$
Since $W$ is unramified over $\Zp$, the determinant of the first
matrix on the right is a unit.  The determinant of the second matrix on
the right is clearly $p^{a_1+\cdots+a_{|o|}}$ and this is the length
of the quotient of $M^o$ by the $W$-span of $H^o$.  This proves that 
$$\disc\left(E(K_d)\tensor\Zp\right)^o=
p^{2(a_1+\cdots+a_{|o|})}$$
and this is the assertion of Theorem~\ref{thm:disc}.

To prove Theorem~\ref{thm:index}, note that we have containments 
$$V^o_d\subset E(K_d)^o\cong H^o\subset M^o$$
and we can compute the lengths of $M^o/WH^o$ and $M^o/WV_d^o$ via
discriminants.  

We just saw that
$$\len_W\frac{M^o}{WH^o}=a_1+\cdots+a_{|o|}.$$  
Let us simplify the sum using that we are in the complementary case,
so that $k$ is odd and $e_{k+j}=e_j$.  We have
\begin{align*}
\sum_{j=1}^{|o|}a_j&=\sum_{j=1}^{2k}(-1)^{j+1}\binom{e_j+1}2+e_je_{1,j-1}\\
&=\sum_{j=1}^{k}(-1)^{j+1}\binom{e_j+1}2+e_je_{1,j-1}
+\sum_{j=1}^{k}(-1)^{k+j+1}\binom{e_j+1}2+e_je_{1,k+j-1}\\
&=\sum_{j=1}^{k}e_j(e_{1,j-1}+e_{1,k+j-1})
\end{align*}
where the second equality uses that $e_{k+j}=e_j$ and the last
equality uses that $k$ is odd.  Noting that
$e_{1,j-1}+e_{1,k+j-1}=e_{1,k}=ht(o)$, we find that 
$$\sum_{j=1}^{|o|}a_j=\frac{|o|}2ht(o).$$

On the other hand, it follows from \cite[Thm~8.2]{Ulmer14a} (when
$d=p^f+1$) and \cite[Prop.~7.1]{ConceicaoHallUlmer14} (when
$d=2(p^f-1)$) that 
$$\len_W\frac{M^o}{WV_d^o}=\frac{|o|f}2.$$
Thus we have
$$\log_p[E(K_d)^o:V_d^o]=\len_W\frac{WH^o}{WV_d^o}=\frac{|o|}2\left(f-ht(o)\right).$$
Since $V_d^o\cong\Gamma_o$ and $\Gamma_o$ has a unique $G$-invariant
superlattice of index $p^{|o|e}$, namely $p^{-e}\Gamma_o$, we must
have
$$\frac{E(K_d)^o}{V_d^o}\cong
p^{-(f-ht(o))/2}\Gamma_o/\Gamma_o
\cong\Gamma_o/p^{(f-ht(o))/2}\Gamma_o.$$
Note also that when $\gcd(o,d)=1$, we have $f=\sum_{j=1}^ke_j$ and
$ht(o)=e_1-e_2+\cdots+e_k$, so
$(f-ht(o))/2=\sum_{j=1}^{(k-1)/2}e_{2j}$.  These are exactly the
assertions of Theorem~\ref{thm:index}, so this completes the proof.

\subsection{Proof of Theorem~\ref{thm:sha}} 
Let $\Fq$ be an extension of $\Fp(\mu_d)$ and consider $E$ over
$\Fq(u)$ with $u^d=t$.

The first step in the proof is to note that
Theorem~\ref{thm:reduction-to-S}(2) and
Theorem~\ref{thm:cohom-of-product}(2) give an isomorphism of
$\Zp[G]$-modules between $\sha(E/\Fq(u))[p^n]^o$ and the cokernel of
the map
$$\left(\left(H^1(\CC)\tensor_w
    H^1(\DD)\right)^{\Delta,o,F=p}\right)/p^n
\to \left(H^1(\CC)/p^n\tensor_W H^1(\DD)/p^n\right)^{\Delta,o,F=V=p}.$$
In the notation of Section~\ref{s:exercises}, this is the cokernel of
$$H^o/p^n\to H^o_n$$
and in Proposition~\ref{prop:H^o_n}(2), we showed that for all $n\ge
ht(o)$ this cokernel is
$$\frac{\bigoplus_{j=1}^kW_{d_j}(\Fq)}{W_{d_k}(\F_{p^{|o|}})}$$
where the $d_j$ are the invariants associated to $o$ in
Subsection~\ref{ss:more-invs}.  This is precisely part (1) of the
theorem.  Part (2) follows immediately once we note that if
$\gcd(o,d)=1$, then $\F_{p^{|o|}}=\F_{p^{2f}}=\Fp(\mu_d)$.

\subsection{Exponents}
We prove parts (1) and (2) of Theorem~\ref{thm:main}.  Clearly part
(2) implies part (1).

By Theorem~\ref{thm:index}, the exponent of $(E(K_d)/V_d)^o$ is
$p^{(f-ht(o))/2}$.  This is maximized when $ht(o)$ is minimized.  If
$f$ is odd, there is an $i\in\Z/d\Z$ with pattern $(ul)^f$ and the
corresponding word has height 1.  If $f$ is even, the minimum value of
$ht(o)$ is 2, which is achieved by an orbit with pattern (and word)
$(ul)^{f-1}uu(lu)^{f-1}ll$.  By Lemma~\ref{lemma:counting}, any such
word actual does arise as the word of some $i\in S$.  Thus the
exponent of $E(K_d)/V_d$ is $p^{\lfloor(f-1)/2\rfloor}$.

By Theorem~\ref{thm:sha}, the exponent of $\sha(E/K_d)^o$ is
$p^{d_{k-1}}$.  By Lemma~\ref{lemma:inv-factors3},
$$d_{k-1}=\max\{e_{ij}|2\le i\le j\le k-1,\text{$i$ and $j$ even}\}.$$
Clearly the alternating sum $e_i-e_{i+1}+\cdots$ is maximized when it
is a single term, and $d_{k-1}$ is maximized by a word whose first
half has the form $u^{e_1}l^{e_2}u^{e_3}$.  In order for this to be
the word associated to a good base point, we must have $e_1\ge e_2$
and $e_2\le e_3$.  Again, by Lemma~\ref{lemma:counting}, any such word
actual does arise as the word of some $i\in S$.  Thus for a given $f$,
the maximum value of $d_{k-1}=e_2$ is $\lfloor f/3\rfloor$ and the
exponent of $\sha(E/K_d)$ is $p^{\lfloor f/3\rfloor}$

\subsection{Comparison of $E/V$ and $\sha$}
Now we prove parts (3) and (4) of Theorem~\ref{thm:main}.

For part (3), note that when $f=1$ or $2$, up to rotation all words
have the form $u^fl^f$ and by Theorems~\ref{thm:index} and
\ref{thm:sha} the groups under discussion are trivial in these cases.
If $f=3$, up to rotation every word is $u^3l^3$ or $(ul)^3$.  In the
latter case, both $\left((E(K_d)/V_d)^o\right)^2$ and $\sha(E/K_d)^o$
are isomorphic to $(\Gamma_o/p)^2$.  When $f=4$, up to rotation the
possible words are $u^4l^4$ and $u^2lul^2ul$.  In the former case both
$\left((E(K_d)/V_d)^o\right)^2$ and $\sha(E/K_d)^o$ are trivial, and in the latter
they are both isomorphic to $(\Gamma_o/p)^2$.

For part (4), we note that by Proposition~\ref{prop:G-reps}
$\Gamma_o/p$ is an absolutely irreducible $\Zp[G]$-module.  Thus all
Jordan-H\"older factors of $(E(K_d)/V_d)^o$ and $\sha(E/K_d)^o$ are
$\Gamma_o/p$, and to prove part (4) it suffices to count the
multiplicities.  By Theorem~\ref{thm:index}, the multiplicity for
$(E(K_d)/V_d)^o$ is $(f-ht(o))/2$.  By Theorem~\ref{thm:sha}, that for
$\sha(E/K_d)^o$ is $d_1+\cdots+d_{k-1}$.  But from the definition,
$$\sum_{j=1}^kd_j=\sum_{j=1}^k e_{2j-1}=\sum_{j=1}^ke_j=f.$$
(Here we use that we are in the complementary case, so $k$ is odd and
$e_{j+k}=e_j$.)  As noted just after Lemma~\ref{lemma:inv-factors2},
$d_k=ht(o)$, so the total multiplicity of $\Gamma_o/p$ in
$\sha(E/K_d)^o$ is $f-ht(o)$.  This completes the proof of part (4).

\subsection{Polynomial interpolation of orders}\label{ss:interpolation}
Now we prove part (5) of Theorem~\ref{thm:main}.
Write $\inv(o)$ for $|o|(f-ht(o))$ so that
$|\sha(E/K_d)^o|=p^{\inv(o)}$.  Then $|\sha(E/K_d)|=p^I$ where
$$I=\sum_{o\in O_{d,p}}\inv(o).$$

Recall that a word is ``good'' if it associated to a good base point
of an orbit.  Let $|\aut(w)|$ be the number of automorphisms of $w$,
i.e., the number of rotations leaving $w$ invariant.  Then since
$\inv(o)$ only depends on the word associated to $o$, we have
$$I=\sum_{\text{good }w}\frac{\left|\left\{i\left|\,\text{the orbit
      through $i$ is $w$}\right.\right\}\right|}{|\aut(w)|}\inv(w).$$
Now $\inv(w)/|\aut(w)|$ is the same for a word $w$ as for the
concatenation of several copies of $w$, so we may take the sum only
over full length words and consider $i$'s whose \emph{pattern} is $w$,
where pattern is defined as in Subsection~\ref{ss:counting}.  Then 
$$I=\sum_{\text{full length, good
  }w}\frac{\inv(w)}{|\aut(w)|}\left|\left\{i\left|\,\text{the pattern
        of $i$ is $w$}\right.\right\}\right|.$$

To finish, we note that by Lemma~\ref{lemma:counting},
$\left|\left\{i\left|\,\text{the pattern 
        of $i$ is $w$}\right.\right\}\right|$ is a polynomial in $p$.
This shows that there is a polynomial $F_f$ depending only on $f$ with
coefficients in $\Z[1/2]$ such that $I=F_f(p)$.  It also shows that
when $I$ is not zero, (i.e., when there are words with non-zero
invariant, i.e., when $f\ge3$), the degree of $F_f$ is $f$.

Here is an example:  If $f=3$, the good words are $u^3l^3$, $ululul$, and
$ul$. We have $\inv(u^3l^3)=0$, $\inv(ululul)=12$ and $\inv(ul)=4$.
Using Lemma~\ref{lemma:counting}, we find that 
$$I=\frac{12}3\left(\frac{p-1}2\right)^3=\frac{(p-1)^3}2.$$
It looks like an interesting and perhaps difficult problem to give a
closed expression for $F_f$ in general.

\section{Complements}\label{s:complements}
In the last section of the paper, we give four complementary results.
Two of them recover much of the main theorem (specifically, the
$p$-torsion in $\sha(E/K_d)$ and $(E(K_d)/V_d)$) using flat rather
than crystalline cohomology.  This gives a reassuring check on the
combinatorial aspects of the main results.  The third gives an
extension of many of the results of the paper to characteristic $p=2$.
In the fourth, we briefly touch upon a generalization to higher genus
curves.

\subsection{$p$-torsion in $\sha(E/K_d)$ via flat
  cohomology}\label{ss:Selp} 
It is possible to compute the $p$-Selmer group of $E/K_d$ (and
therefore the $p$-torsion in the Tate-Shafarevich group) using flat
cohomology and the methods of \cite{Ulmer91}.  This yields a second
proof that $\sha(E/K_d)$ is trivial if and only if $f\le2$, and it
provides a check on the crystalline calculation described in the main
part of the paper.

We refer to \cite[\S1]{Ulmer91} for the definition of the Selmer group
denoted $\Sel(K_d,p_E)$.  It sits in an exact sequence
$$0\to E(K_d)/pE(K_d)\to\Sel(K_d,p_E)\to\sha(E/K_d)[p]\to0.$$

\begin{prop}\label{prop:sel-p-dim}
With $p$, $f$, $d=p^f+1$, and $E$ as in the rest of the
  paper,
\begin{enumerate}
\item $\Sel(K_d,p_E)$ is an $\Fp$-vector space of dimension
  $(p-1)(p^{f-1}+1)f/2$.
\item $\sha(E/K_d)=0$ if and only if $f\le2$.
\end{enumerate}
\end{prop}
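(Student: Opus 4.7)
The plan is to mirror the crystalline calculation of $\Br(\SS)[p^\infty]$ from Section~\ref{s:arithmetic-of-CxD} at the level of flat cohomology with $\mu_p$-coefficients, using the geometric reduction of Section~\ref{s:DPC}. The target is an isomorphism of $\Fp$-vector spaces
$$\Sel(K_d,p_E)\cong\left(H^2(\SS,\mu_p)/T\right)^{\Delta},$$
where, as in Section~\ref{s:arithmetic-of-CxD}, $T$ is the subgroup of $H^2(\SS,\mu_p)$ coming from $\Pic(\CC)/p\oplus\Pic(\DD)/p$.

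To carry this out, I would first recall from \cite{Ulmer91} the presentation of $\Sel(K_d,p_E)$ as a flat cohomology group attached to the N\'eron model $\EE\to\P^1_u$: after subtracting the ``trivial'' contribution from the zero section and fiber components, $\Sel(K_d,p_E)$ sits in the Kummer sequence as an extension of $\sha(E/K_d)[p]$ by $E(K_d)/pE(K_d)$. Using the birational domination $\tilde\SS/\Delta\to\WW$ of Proposition~\ref{prop:geometry} together with the fact that $|\Delta|$ is prime to $p$, this extension descends to $\left(H^2(\SS,\mu_p)/T\right)^\Delta$ in complete analogy with the Brauer group calculation in the proof of Theorem~\ref{thm:reduction-to-S}(2). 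The main technical obstacle is this first step: matching the ``trivial'' subgroup $T$ on the $\SS$ side with the Shioda--Tate trivial subgroup on the $\EE$ side so that the quotients agree after taking $\Delta$-invariants.

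Granted this identification, the proof of Theorem~\ref{thm:cohom-of-product}(2) with $n=1$ already shows
$$H^2(\SS,\mu_p)/T\;\cong\;\left(H^1(\CC)/p\tensor_W H^1(\DD)/p\right)^{F=V=p},$$
and taking $\Delta$-invariants gives $\left(H^2(\SS,\mu_p)/T\right)^\Delta\cong H_1$ in the notation of Section~\ref{s:exercises}. By Proposition~\ref{prop:H1-via-crys}, $\dim_{\Fq}H_1=\frac{p-1}{2}\cdot\frac{p^{f-1}+1}{2}$; since $[\Fq:\Fp]=2f$ for $\Fq=\Fp(\mu_d)$ and $d=p^f+1$, we obtain $\dim_{\Fp}\Sel(K_d,p_E)=(p-1)(p^{f-1}+1)f/2$, proving part~(1).

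For part~(2), recall from \cite{Ulmer14a} that $E(K_d)$ has rank $p^f-1$ and, for odd $p$, no $p$-torsion, so $\dim_{\Fp}E(K_d)/pE(K_d)=p^f-1$. From the defining short exact sequence
$$0\to E(K_d)/pE(K_d)\to\Sel(K_d,p_E)\to\sha(E/K_d)[p]\to0$$
we read off $\dim\sha(E/K_d)[p]=(p-1)(p^{f-1}+1)f/2-(p^f-1)$, which evaluates to $0$ for $f=1,2$ and to $(p-1)^3/2>0$ for $f=3$; an elementary induction on $f$ confirms positivity for all $f\ge3$. Since $\sha(E/K_d)$ is a finite $p$-group by \cite{Ulmer14a}, it vanishes if and only if its $p$-torsion does, completing the proof.
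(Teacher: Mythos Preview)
Your argument is correct in outline, but it takes a genuinely different route from the paper's proof, and your presentation makes it look harder than it is.

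\textbf{What the paper does.} The paper's proof is an \emph{independent} flat-cohomology computation in the style of \cite{Ulmer91}: it identifies $\Sel(K_d,p_E)$ with the subspace $H^0(I_{d,p},\Omega^1)^{\psi^{-1},\CC=0}$ of exact differentials on an Igusa curve $I_{d,p}\to\P^1_u$, then computes the rank of the Cartier operator on the $\psi^{-1}$-eigenspace via explicit relations among the functions $f_0,\dots,f_{p-1}$ (Lemma~\ref{lemma:relations}) and a somewhat elaborate bookkeeping argument. The point of this section is precisely to give a check on the crystalline calculation that does not feed through Theorems~\ref{thm:reduction-to-S} and~\ref{thm:cohom-of-product}.

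\textbf{What you do.} You reduce to the crystalline side: Theorem~\ref{thm:reduction-to-S}, Theorem~\ref{thm:cohom-of-product}(2) with $n=1$, and Proposition~\ref{prop:H1-via-crys} together show that $H_1$ sits in a short exact sequence $0\to E(K_d)/p\to H_1\to\sha(E/K_d)[p]\to0$, whence $\dim_{\Fp}H_1=\dim_{\Fp}\Sel(K_d,p_E)$ and the dimension formula follows from Proposition~\ref{prop:H1-via-crys}. This is valid and much shorter, but it is not independent of the crystalline machinery, so it does not supply the ``reassuring check'' that Section~\ref{s:complements} advertises; it also does not produce the explicit basis of Corollary~\ref{cor:diffs} or the module-theoretic refinement of Corollary~\ref{cor:sha-p}.

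\textbf{A simplification you should make.} You frame the argument as establishing an isomorphism $\Sel(K_d,p_E)\cong\left(H^2(\SS,\mu_p)/T\right)^\Delta$ and flag the matching of trivial parts through the birational maps as the main obstacle. In fact you never need this isomorphism, and it is not immediate: $H^2(\,\cdot\,,\mu_p)$ is \emph{not} a birational invariant (its $\Pic/p$ piece changes under blow-ups), so transporting it along $\tilde\SS/\Delta\to\WW$ requires real work. But for the Proposition you only need the \emph{dimension} equality, and that follows simply by comparing the two short exact sequences
\[
0\to E(K_d)/p\to\Sel(K_d,p_E)\to\sha(E/K_d)[p]\to0
\quad\text{and}\quad
0\to E(K_d)/p\to H_1\to\sha(E/K_d)[p]\to0,
\]
the second coming directly from Theorems~\ref{thm:reduction-to-S} and~\ref{thm:cohom-of-product}(2) after taking $\Delta$-invariants. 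Drop the unproved isomorphism and state only this; your proof of part~(2) is then fine as written.
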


The proof of the proposition will occupy the rest of this section.
Note that part (2) follows easily from part (1) since we know that
  $E(K_d)/pE(K_d)$ is an $\Fp$-vector space of dimension $p^f-1$.

Let $A=A(E,dx/2y)$ be the Hasse invariant of $E$.  By a simple
calculation (see, e.g., \cite[\S13, Prop.~3.5]{HusemollerEC}), this is
$$A=\sum_{i=0}^{(p-1)/2}\binom{(p-1)/2}{i}^2t^i.$$
Let $\alpha$ be a $(p-1)$-st root of $A$ in
$\overline{K}$ and let $F_{d,p}$ be the field $K_d(\alpha)$.  Then
$F_{d,p}$ is a Galois extension of $K_d$ with group $\Fptimes$.  We
let $I_{d,p}\to\P^1_u$ be the corresponding cover of smooth projective
curves over $\Fq=\Fp(\mu_d)$.  (Here $I$ is for ``Igusa.'')  Then
the argument leading to \cite[Thm.~7.12b]{Ulmer91} yields an
isomorphism 
$$\Sel(K_d,p_E)\cong H^0(I_{d,p},\Omega^1_{I_{d,p}})^{\psi^{-1},\,\CC=0}$$
where $\CC=0$ indicates the kernel of the Cartier operator (i.e., 
the subspace of exact differentials), and 
$\psi^{-1}$ denotes the subspace where
$\gal(F_{d,p}/K_d)=\Fptimes$ acts via the character $\psi^{-1}$ where
$\psi:\Fptimes\to k^\times$ is the natural inclusion.

(Some of the results of \cite{Ulmer91} used just above are
stated for $p>3$, but this is assumed only to guarantee that at
places of potentially multiplicative reduction, $E$ obtains
multiplicative reduction over an extension of degree prime to $p$.
This is true for the Legendre curve even when $p=3$.)

Using the covering $I_{d,p}\to\P^1_u$ (which is ramified exactly where
$\alpha$ has zeroes), we find that 
$$H^0(I_{d,p},\Omega^1_{I_{d,p}})^{\psi^{-1}}=
\left\{\left.\frac{f(u)\,du}{\alpha^{p-2}}\,\right|\,\deg(f)\le
  N\right\}$$ where $f$ is a polynomial of degree at most
$N=(p-2)(p^f+1)/2-2$ when $d=p^f+1$.  (For $d=1$, there is also
ramification at infinity and we have $N=(p-5)/2$.)  The crux of the
proof is to compute the subspace killed by the Cartier
operator.

To that end, we first make some calculations at level $d=1$, i.e., on
the curve $I_{1,p}$.  Write
$$\frac{dt}{\alpha^{p-2}}=\left(f_0^p+tf_1^p+\cdots+t^{p-1}f_{p-1}^p\right)dt$$
where the $f_i\in F_{1,p}=\Fp(t,\alpha)$.  Since
$(1/\alpha)=(A/\alpha^p)$, the $f_i$ are all polynomials in $t$ times
$1/\alpha^{p-2}$.  Note that $\CC(t^idt/\alpha^{p-2})=f_{p-1-i}dt$ for
$i=0,\dots,p-1$.  

The key step in the proof of the proposition is the following
calculation of dimensions of certain spaces spanned by the $f_i$.  In
it, we use angle brackets to denote the $\Fq$-span of the terms
within.

\begin{lemma}\label{lemma:relations}\mbox{}\begin{enumerate}
\item $\dim_\Fq\left\<f_{p-1},f_{p-2},\dots,f_{(p+3)/2}\right\>=(p-3)/2$.
\item We have equalities and containments
\begin{align*}
\left\<f_{p-1},\dots,f_{(p+3)/2}\right\>&=\left\<f_{p-2},\dots,f_{(p+1)/2}\right\>
=\cdots=\left\<f_{(p-1)/2},\dots,f_{2}\right\>\\
&\subsetneq\left\<f_{(p-1)/2},\dots,f_{1}\right\>=
\left\<f_{(p-3)/2},\dots,f_{0}\right\>
\end{align*}
and
\begin{align*}
  \left\<f_{(p-3)/2},\dots,f_{0}\right\>&=\left\<f_{(p-5)/2},\dots,f_{0},tf_{p-1}\right\>
=\cdots=\left\<f_{0},tf_{p-1},\dots,tf_{(p+3)/2}\right\>
\end{align*}
\end{enumerate}
\end{lemma}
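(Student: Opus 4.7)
Since $f_i = g_i/\alpha^{p-2}$ for polynomials $g_i = g_i(t)\in\Fq[t]$ determined by the identity
\[
A^{p-2}\;=\;\sum_{i=0}^{p-1} t^i\, g_i(t)^p,
\]
the assertions of the lemma about $\Fq$-spans of the $f_i$ and of the $tf_i$ translate at once to the corresponding statements about the $g_i$ and the $tg_i$, the common factor $1/\alpha^{p-2}$ being immaterial. The plan is thus to work entirely with the polynomials $g_i$.

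The main ingredient will be the identity $A^2\cdot A^{p-2}=A^p=A(t^p)$, which in characteristic $p$ forces $A^2A^{p-2}$ to consist only of monomials whose exponents are divisible by $p$. Writing $A^2=\sum_{k=0}^{p-1}b_k\, t^k$ (so that $b_0=b_{p-1}=1$) and computing the normal form $\sum_r t^r(\cdot)^p$ of $A^2 A^{p-2}$ yields, for each $r\in\{1,\dots,p-1\}$, the linear relation
\[
(R_r):\qquad\sum_{j=0}^{r}b_j\, g_{r-j}\;+\;t\sum_{j=r+1}^{p-1}b_j\, g_{r+p-j}\;=\;0.
\]
In $(R_r)$ both $g_r$ and $tg_{r+1}$ appear with coefficient $1$, so either can be solved for in terms of the other terms. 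I will then apply $(R_r)$ for $r=1,2,\dots,(p-3)/2$ in succession to exchange each $f_r$ for a combination involving $tf_{r+1},\dots,tf_{p-1}$, yielding the bottom chain of equalities in part (2); suitable $\Fp$-linear combinations of the $(R_r)$ for $r$ in the upper half will likewise produce the sliding-window equalities at the top of part (2).

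The strict containment and the independence assertion in part (1) require a degree analysis of the $g_i$. Since $A^{p-2}$ is monic of degree $(p-1)(p-2)/2$ and $(p-1)(p-2)/2\equiv 1\pmod p$, one checks that $g_1$ is monic of degree exactly $(p-3)/2$, while $\deg g_i\le (p-5)/2$ for every $i\ge 2$; this immediately gives $f_1\notin\langle f_{(p-1)/2},\dots,f_2\rangle$ and hence the strict containment. For the independence of $f_{(p+3)/2},\dots,f_{p-1}$ in part (1), degrees alone do not suffice when $p$ is large, since these $(p-3)/2$ polynomials can share their top degree; the argument will have to inspect the leading coefficients of these $g_i$, which are specific coefficients of $A^{p-2}$ in the appropriate residue classes modulo $p$. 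Establishing the non-singularity of the resulting matrix of multinomial sums is the main obstacle, but once it is in hand the relations $(R_r)$ deliver every span equality in the lemma by formal linear-algebra manipulation.
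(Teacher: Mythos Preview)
Your proposal has a genuine gap in part~(1). You correctly observe that a direct proof of the linear independence of $f_{(p+3)/2},\dots,f_{p-1}$ would require showing that a certain $(p-3)/2\times(p-3)/2$ matrix of coefficients of $A^{p-2}$ is nonsingular, and you explicitly flag this as ``the main obstacle'' without resolving it. The paper sidesteps this entirely with an arithmetic argument: these $(p-3)/2$ functions span the image of the Cartier operator on $H^0(I_{1,p},\Omega^1)^{\psi^{-1}}$, and a dependence among them would force $\Sel(K_1,p_E)\neq0$. But at level $d=1$ one knows $E(K_1)/p=0$ and, via the BSD formula, $\sha(E/K_1)=0$, whence $\Sel(K_1,p_E)=0$. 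This is the missing idea, and there is no indication that your matrix approach can be completed.

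For part~(2), your source of relations is genuinely different from the paper's and less well suited to the task. The paper produces relations from \emph{exact} differentials: setting $g_0=-A'$ and $g_i=iA-tA'$, the forms $d\alpha=g_0\,dt/\alpha^{p-2}$ and $d(t^i\alpha)=t^{i-1}g_i\,dt/\alpha^{p-2}$ lie in $\ker\CC$, and the paper builds specific combinations $h_i$ whose degree and lowest-order term are controlled so that $\CC(h_i\,dt/\alpha^{p-2})=0$ is automatically a relation among a \emph{window} $f_{p-1-i},\dots,f_{(p+1)/2-i}$ of length $(p-1)/2$, with both endpoints appearing. Your relations $(R_r)$, by contrast, each involve all of $g_0,\dots,g_{p-1}$ (some with a factor of $t$); you assert that ``suitable $\Fp$-linear combinations'' will produce the sliding-window relations, but do not exhibit them or explain why the required cancellation occurs. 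Your degree argument for the strict containment $f_1\notin\langle f_{(p-1)/2},\dots,f_2\rangle$ is correct and matches the paper's.
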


\begin{proof}
  Recall that $K=K_1=\Fp(t)$.  First, we note that $E(K)/pE(K)=0$ by
  \cite[5.2 and 6.1]{Ulmer14a}, and using the BSD formula as in
  \cite[\S10]{Ulmer14a} shows that $\sha(E/K)=0$.  Thus
  $\Sel(K,p_E)=0$.

On the other hand, as we noted above, $\Sel(K,p_E)$ is isomorphic to
the kernel of the Cartier operator on 
$$\left\{\left.\frac{f(t)\,dt}{\alpha^{p-2}}\,\right|\,\deg(f)\le
    (p-5)/2\right\}.$$ Since this kernel is trivial, we find that
  $f_{p-1},\dots,f_{(p+3)/2}$ are linearly independent, and this is
  the first claim of the lemma.

Now set $g_0=-A'=-dA/dt$ and $g_i=iA-tA'$, and compute that
$A'dt=-\alpha^{p-2}d\alpha$ so that $d\alpha=g_0dt/\alpha^{p-2}$ and
$d(t^i\alpha)=t^{i-1}g_idt/\alpha^{p-2}$ for $i\ge0$.  These exact
differentials provide relations among the $f_i$.  More precisely,
note that $g_0$ has degree $(p-3)/2$ and non-zero constant term, so
$\CC(g_0dt/\alpha^{p-2})=0$ implies that a linear combination of
$f_{p-1},\dots,f_{(p+1)/2}$ is zero, and $f_{p-1}$ and $f_{(p+1)/2}$
appear in this relation with non-zero coefficients.  This implies that 
 $$\left\<f_{p-1},\dots,f_{(p+3)/2}\right\>=\left\<f_{p-2},\dots,f_{(p+1)/2}\right\>$$
which is the first equality displayed in part (2) of the lemma.  

To obtain the rest of the equalities in that display, we set $h_0=g_0$
and 
\begin{align*}
h_i&=\binom{(p-1)/2}i^2t^{i-1}g_i+h_{i-1}\\
&=\sum_{\ell=1}^i\binom{(p-1)/2}{\ell}^2t^{\ell-1}g_\ell+g_0
\end{align*}
for $i=1,\dots,(p-3)/2$.  One checks inductively that $h_i$ has degree
$(p-3)/2+i$ and its non-zero term of lowest degree is
$-(i+1)\binom{(p-1)/2}{i+1}^2t^i$.  Thus $\CC(h_idt/\alpha^{p-2})=0$
gives a relation among $f_{p-1-i},\dots,f_{(p+1)/2-i}$ where the
coefficients of $f_{p-1-i}$ and $f_{(p+1)/2-i}$ are non-zero.  These
relations give the desired equalities between spans. 

The proper containment in the second line of the first display in
part~(2) of the lemma is equivalent to saying that $f_1$ and
$\left\<f_{(p-1)/2},\dots,f_{2}\right\>$ are linearly
independent.  One way to see this is to note that the
$\alpha^{p-2}f_i$ are polynomials in $t$ and since the degree of
$A^{p-2}$ is congruent to 1 modulo $p$, $\alpha^{p-2}f_1$ has degree
strictly greater than $\alpha^{p-2}f_i$ for $i=2,\dots,p-1$.  Thus $f_1$
and $\<f_{p-1},\dots,f_{2}\>$ are linearly independent.

To obtain the remaining equalities of part~(2), we consider
the exact differentials $t^{i-1}g_idt/\alpha^{p-2}$ for
$i=(p+1)/2,\dots,p-1$.  In this range, $t^{i-1}g_i$ has degree
$(p-3)/2+i$ and lowest term of degree $i-1$.  For $i=(p+1)/2$, we get a
relation among $f_{(p-1)/2},\dots,f_0$ with $f_{(p-1)/2}$ and $f_0$
appearing, yielding the last equality in the first display of
part~(2).  For $i=(p+3)/2,\dots,p-1$, we get relations among
$f_{p-i},\dots,tf_{(3p+1)/2-i}$ with $f_{p-i}$ and $tf_{(3p+1)/2-i}$
appearing, and these relations give the equalities in the second
display of part~(2).

This completes the proof of the lemma.
\end{proof}

We are now in position to compute the rank of the
Cartier operator on $H^0(I_{d,p},\Omega^1_{I_{d,p}})^{\psi^{-1}}$, in
  other words
$$R:=\dim_\Fq\CC\left(\left\{\left.\frac{f(u)\,du}{\alpha^{p-2}}
\,\right|\,\deg(f)\le(p-2)(p^f+1)/2-2\right\}\right).$$
Noting that $u=t/u^{p^f}$ and $du=u^{-p^f}dt$, we find that
$$\CC(u^{i+pj}du/\alpha^{p-2})=u^{j-(i+1)p^{f-1}}f_i\,du$$
for $0\le i\le p-1$ and
$$0\le j\le \begin{cases}
\frac{p-3}2p^{f-1}+\frac{p^{f-1}-1}2&\text{if $i\le p-3$}\\
\frac{p-3}2p^{f-1}+\frac{p^{f-1}-3}2&\text{if $i=p-2,p-1$.}
\end{cases}
$$
This implies that the image of $\CC$ will be spanned by spaces of the
form $u^e\<f_a,\dots,f_b\>$.  To compute the dimension, we observe
that if $e_1,\dots,e_\ell$ are integers pairwise non-congruent modulo
$d$, and if $V_1,\dots V_\ell$ are $\Fq$-vector spaces spanned by
subsets of $\{t^jf_i\,|\,0\le i\le p-1,\ j\in\Z\}$, then the subspaces
$u^{e_i}V_i$ of $F_{d,p}$ are linearly independent over $\Fq$.
This plus the information in Lemma~\ref{lemma:relations} suffices to
compute $R$.

An elaborate and somewhat unpleasant exercise in bookkeeping which we
omit leads to 
$$R=\frac{(p-3)}2\frac{(p-1)}2p^{f-1}+\frac{(p-3)}2\frac{(p^f+3)}2
+\frac{p-1}2\left(p^{f-1}-1\right)$$
which in turn implies that 
$$\dim_\Fq\ker(\CC)=N+1-R=\frac{(p-1)}2\frac{(p^{f-1}+1)}2.$$
Since $[\Fq:\Fp]=2f$, this completes the proof of
Proposition~\ref{prop:sel-p-dim}.  

  The analysis above yields quite a bit more information about
  $\Sel(K,p_E)$:
\begin{corss}\label{cor:diffs}
The differentials
$$\omega_{i,j}=u^{pj-ip^f}h_i(t)du/\alpha^{p-2}
=u^{i+pj}t^{-i}h_i(t)du/\alpha^{p-2}$$ 
for $0\le i\le (p-3)/2$ and $0\le j\le(p^{f-1}-1)/2$ are regular and
exact, and they give an $\Fq$ basis for 
$$\Sel(K,p_E)\cong H^0(I_{p,d},\Omega^1_{I_{p,d}})^{\psi^{-1},\CC=0}.$$
\end{corss}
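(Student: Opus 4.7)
The plan has four pieces—exactness, regularity, linear independence, and spanning—each following by fairly direct computation from the preparation already done in the proofs of Lemma~\ref{lemma:relations} and Proposition~\ref{prop:sel-p-dim}.

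\emph{Exactness.} Setting $P_i(t) = 1 + \sum_{\ell=1}^i \binom{(p-1)/2}{\ell}^2 t^\ell$, a short calculation using $d\alpha = g_0\,dt/\alpha^{p-2}$ and $g_\ell = \ell A - tA'$ yields on $I_{1,p}$ the identity
$$d(\alpha P_i(t)) = (A P_i' - A' P_i)\,dt/\alpha^{p-2} = h_i(t)\,dt/\alpha^{p-2}.$$
Pull this back to $I_{d,p}$, set $m = pj - (i+1)p^f$, and form the rational function $F_{i,j} = u^m \alpha P_i(t)$ on $I_{d,p}$. Leibniz gives
$$dF_{i,j} = m u^{m-1} \alpha P_i(t)\,du + u^m h_i(t)\,dt/\alpha^{p-2}.$$
Since $m \equiv 0 \pmod p$, the first term vanishes in characteristic $p$, and $dt = u^{p^f}\,du$ turns the second into $u^{m+p^f} h_i(t)\,du/\alpha^{p-2} = \omega_{i,j}$. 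Hence $\omega_{i,j} = dF_{i,j}$ is exact, so in particular $\CC(\omega_{i,j}) = 0$.

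\emph{Regularity.} I check non-negative valuation at every place of $I_{d,p}$; only those projecting to $u = 0$, $u = \infty$, or a $d$-th root of a zero of $A$ need attention. At $u = 0$ the cover $I_{d,p} \to \P^1_u$ is unramified and $\alpha$ is a unit (since $A(0) = 1$); the proof of Lemma~\ref{lemma:relations} identifies the lowest-order term of $h_i$ as $-(i+1)\binom{(p-1)/2}{i+1}^2 t^i$, so $h_i(u^d)$ has order $id$ and $\ord(\omega_{i,j}) = (pj - ip^f) + id = pj + i \ge 0$. At $u = \infty$ the cover is still unramified; $\alpha^{p-1} = A(u^d)$ has order $-d(p-1)/2$ in $1/u$, so $\alpha$ has order $-d/2$, while $h_i$ has degree $(p-3)/2 + i$ and $du$ contributes $-2$; these combine to give valuation $(p^f - 3)/2 - pj - i$, which is $\ge 0$ because the bounds on $i$ and $j$ force $pj + i \le (p^f-3)/2$, with equality exactly at the extremal $(i,j) = ((p-3)/2,(p^{f-1}-1)/2)$. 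Above a zero $t_0$ of $A$ the Kummer cover is totally ramified of degree $p-1$ with $\alpha$ a uniformizer; $1/\alpha^{p-2}$ contributes $-(p-2)$, and since $u - u_0$ is proportional to $\alpha^{p-1}$ we get $\ord(du) = p-2$, so the net valuation is $0$.

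\emph{Independence and spanning.} The group $\mu_d$ acts on $I_{d,p}$ by $u \mapsto \zeta u$, fixing both $t = u^d$ and $\alpha$, so $\omega_{i,j}$ lies in the $\chi^{pj - ip^f + 1}$-eigenspace; since $p^f \equiv -1 \pmod d$, this is the $\chi^{pj + i + 1}$-eigenspace, and the map $(i,j) \mapsto pj + i + 1$ is injective on the range $0 \le i \le (p-3)/2$, $0 \le j \le (p^{f-1}-1)/2$ (the mod-$p$ residue pins down $i$, which then determines $j$). Hence the $\omega_{i,j}$ lie in distinct isotypic components and are $\Fq$-linearly independent; their $\Fq$-span has dimension $(p-1)(p^{f-1}+1)/4$, which equals the $\Fq$-dimension of $\Sel(K_d, p_E)$ provided by Proposition~\ref{prop:sel-p-dim}(1) (using $[\Fq:\Fp] = 2f$), so the $\omega_{i,j}$ form the desired basis. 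The delicate piece is the regularity check at $u = \infty$, where the valuation bound is exactly saturated by the allowed ranges of $i,j$, so contributions from $h_i$, $du$, and $\alpha^{p-2}$ must be tallied carefully; once the key identity $h_i\,dt/\alpha^{p-2} = d(\alpha P_i(t))$ is in hand, the rest is bookkeeping.
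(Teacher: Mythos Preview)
Your proof is correct and follows the same strategy as the paper, which simply defers to the proofs of Proposition~\ref{prop:sel-p-dim} and Lemma~\ref{lemma:relations} for exactness and the $\psi^{-1}$ property, declares linear independence obvious, and counts. Your version supplies the details the paper omits: the explicit antiderivative $F_{i,j}=u^m\alpha P_i(t)$ (making transparent why multiplying an exact form by $u^m$ with $p\mid m$ preserves exactness), the place-by-place regularity check, and the $\mu_d$-eigenspace argument for independence.
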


\begin{proof}
  The proof of Proposition shows that the displayed differentials are
  exact and lie in the $\psi^{-1}$ eigenspace.  They are obviously
  linearly independent, and since the number of them is the dimension
  of $\Sel(K,p_E)$ over $\Fq$, they form an $\Fq$ basis.
\end{proof}

We can also deduce results on the structure of $\Sel(K,p_E)$ as a
module over $\Fp[G]$:

\begin{corss}\label{cor:sha-p}
  If $o\in O$ is an orbit whose pattern is $u^{e_1}l^{e_2}\cdots
  u^{e_k}$ then the multiplicity of $\Gamma_o/p$ in $\Sel(K,p_E)$ is
  $k$, and its multiplicity in $\sha(E/K_d)$ is $k-1$.
\end{corss}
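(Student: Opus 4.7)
The plan is to read off both multiplicities directly from the explicit $\Fq$-basis $\{\omega_{i,j}\}$ of $\Sel(K_d,p_E)$ supplied by Corollary~\ref{cor:diffs}. Each differential $\omega_{i,j}=u^{i+pj}t^{-i}h_i(t)\,du/\alpha^{p-2}$ has all of its factors ($t$, $h_i(t)\in\Fp[t]$, $\alpha$) fixed by $\mu_d$ except for $u^{i+pj}\,du$, so $\omega_{i,j}$ sits in the $\chi^{i+pj+1}$-eigenspace for $\mu_d$. Moreover $\omega_{i,j}$ is invariant under $\gal(\Fq/\Fp)$ because its coefficients all lie in $\Fp$; hence the $\Fq$-line $\Fq\cdot\omega_{i,j}$ is a $\Fp[G]$-stable subspace of $\Sel(K_d,p_E)$, and after extension of scalars to $\Fpbar$ it splits as a sum of one-dimensional $\mu_d$-eigenspaces with characters $\chi^{(i+pj+1)p^a}$ for $a=0,1,\ldots,2f-1$. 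Setting $W=\{i+pj+1\in\Z/d\Z : 0\le i\le(p-3)/2,\ 0\le j\le(p^{f-1}-1)/2\}$, this identifies the multiplicity of $\Gamma_o/p$ in $\Sel(K_d,p_E)$, as an $\Fp[G]$-composition factor, with $(2f/|o|)\cdot|W\cap o|$.

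The technical heart is the combinatorial identity $|W\cap o|=k_{\text{word}}$, where $k_{\text{word}}$ is the number of $u$-blocks in the word attached to $o$ (the integer called $k$ in Section~\ref{s:orbits}). To prove it I would first rewrite $W=\{m\in\Z/d\Z : m\in A \text{ and } m\bmod p\in[1,(p-1)/2]\}$, the range constraint on $j$ being automatic once $m<d/2$. For $m$ sitting at position $j$ in the orbit, $m\in A$ says $w_j=u$, so one only needs to show that $m\bmod p\in[1,(p-1)/2]$ is equivalent to $w_{j-1}=l$, i.e., to $p^{-1}m\in B$. Using $d\equiv 1\pmod p$, the unique $c\in\{0,1,\ldots,p-1\}$ satisfying $p\cdot\widetilde{p^{-1}m}=\widetilde m+cd$ (tildes denoting least nonnegative representatives) satisfies $c\equiv -m\pmod p$; a short bound then shows that $c\in[(p+1)/2,p-1]$, equivalently $m\bmod p\in[1,(p-1)/2]$, is precisely the range forcing $\widetilde{p^{-1}m}>d/2$. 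Thus elements of $W\cap o$ correspond bijectively to cyclic $lu$-transitions in the word, which are in bijection with the $u$-blocks.

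To match the result with the $k$ of the corollary (the number of blocks in the \emph{pattern}), I would use that the pattern of a complementary orbit equals $\lfloor f/|o|\rfloor$ copies of the word followed by the first half of the word, with no block-merging at any join: each word ends in $l$ and begins with $u$, and the first half of the word ends in $u$ because $k_{\text{word}}$ is odd. Since $2f/|o|$ is odd, $2\lfloor f/|o|\rfloor+1=2f/|o|$, and counting gives $k=(2f/|o|)k_{\text{word}}$, which matches the multiplicity $(2f/|o|)\cdot|W\cap o|$ found above. For $\sha$, the short exact sequence $0\to E(K_d)/pE(K_d)\to\Sel(K_d,p_E)\to\sha(E/K_d)[p]\to 0$, combined with Remark~\ref{rem:E(K_d)-as-G-module} (which shows that $\Gamma_o/p$ appears in $E(K_d)/pE(K_d)$ with multiplicity exactly $1$ for each $o\in O_{d,p}$), delivers the multiplicity $k-1$ in $\sha(E/K_d)[p]$.

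The main obstacle is the arithmetic step in paragraph two: translating the residue condition on $m\bmod p$ into the word condition $w_{j-1}=l$ on the predecessor in the orbit. The remaining ingredients — computing the $\mu_d$-weight of $\omega_{i,j}$, checking Galois-invariance, the $\Fpbar$-isotypic decomposition of $\Fq\cdot\omega_{i,j}$, and the block accounting going from word to pattern — are routine, but this one step genuinely exploits that $d\equiv 1\pmod p$, which is where the balance hypothesis enters.
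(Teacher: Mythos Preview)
Your proposal is correct and follows essentially the same route as the paper: both read off the $\mu_d$-weight of each basis differential $\omega_{i,j}$ from Corollary~\ref{cor:diffs}, invoke Proposition~\ref{prop:G-reps}(5) to identify $\Fq\cdot\omega_{i,j}$ with $(\Gamma_o/p)^{2f/|o|}$, count how many indices $1+i+pj$ fall in a given orbit, and then convert from the word count to the pattern count via the factor $2f/|o|$. Your combinatorial characterization of the index set as $\{m:m\in A,\ p^{-1}m\in B\}$ (equivalently, cyclic $lu$-transitions in the word) is in fact cleaner than the paper's phrasing ``word begins and ends with the letter $u$,'' which should read ``begins with $u$ and ends with $l$'' to match both your analysis and the subsequent count $k'$.
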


\begin{proof}
  The previous corollary shows that as an $\Fp[G]$-module,
  $\Sel(K_d,p_E)$ is the direct sum 
$$\bigoplus_{\substack{0\le i\le(p-3)/2\\0\le j\le(p^{f-1}-1)/2}}
\Fq u^{i+pj}.$$
If $\ell\in o$, then by Proposition~\ref{prop:G-reps}(5),  $\Fq
u^\ell\cong(\Gamma_o/p)^{2f/|o|}$.

Now an orbit $o$ appears in the discussion above as many times as
there are $\ell\in o$ which can be written $\ell=1+i+pj$ with $0\le
i\le(p-3)/2$ and $0\le j\le(p^{f-1}-1)/2$.  
Writing $\ell=\sum_{k=1}^f i_kp^{k-1}$ as in
Subsection~\ref{ss:counting}, we see that $\ell$ can be written
$\ell=1+i+pj$ with $i$ and $j$ ``small'' in the sense above if and
only if the word associated to $\ell$ begins and ends with the
letter $u$.  Thus, if the word of $o$ is
$u^{e_1}\cdots u^{e_{k'}}l^{e_1}\cdots l^{e_{k'}}$, then the number of
times $o$ arises is $k'$. 

To finish, we note that the pattern of the standard base point of
$o$ is the first half of $w(o)^{2f/|o|}$, and written in exponential
form this has $k=k'(2f/|o|)$ runs of $u$s.  Thus $\Gamma_o/p$ appears
$k$ times in $\Sel(K_d,p_E)$.  This proves our claim about
$\Sel(K_d,p_E)$. 

The claim about $\sha(E/K_d)$ follows from the fact
that as an $\Fp[G]$-module, $E(K_d)/p$ is the direct sum of all
$\Gamma_o/p$ with $o\in O$, each taken with multiplicity one.  (This
follows immediately from Remark~\ref{rem:E(K_d)-as-G-module}.)
\end{proof}

We need one more result coming from \cite{Ulmer91}. To state it,
recall that the Selmer group for the isogeny $\Fr:E\to E^{(p)}$ over
$F_{p,d}$ is naturally a subgroup of 
$$F_{p,d}^\times/F_{p,d}^{\times p}\cong\Omega^1_{log}(F_{p,d}),$$ 
where the latter is the
space of meromorphic, logarithmic differentials on $I_{p,d}$.  In
\cite[\S5]{Ulmer91}, we defined a logarithmic differential $dq/q$
attached to $E/F_{p,d}$ which depends only on the choice of a $p-1$-st
root $\alpha$ of $A$ (or what amounts to the same thing, a non-trivial point
of order $p$ in $E^{(p)}(F_{p,d})$).

\begin{lemma}\label{lemma:dq/q}
We have an equality
$$\frac{dq}q=\frac{\alpha^2du}{u(t-1)}=\frac{\alpha^2du}{u(u^d-1)}$$
 of meromorphic differentials on $I_{p,d}$ and a calculation of
Selmer groups: 
$$\Sel(F_{p,d},\Fr_E)=\Fp\frac{dq}q.$$
\end{lemma}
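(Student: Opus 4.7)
The plan is to handle the two equalities and then the Selmer computation in sequence. The first equality $\alpha^2du/(u(t-1)) = \alpha^2du/(u(u^d-1))$ is immediate from $t = u^d$. For the main identity $dq/q = \alpha^2du/(u(t-1))$, I would unpack the construction of $dq/q$ from \cite[\S5]{Ulmer91}: there $dq/q$ is characterized (once $\alpha$ has been fixed) as the unique logarithmic differential on $I_{p,d}$ whose expansion at each place of multiplicative reduction of $E$ recovers the Tate $q$-parameter, and which is regular elsewhere. To verify the proposed formula I would compute the divisor of $\alpha^2du/(u(t-1))$ on $I_{p,d}$: it has simple poles exactly over $t = 0, 1, \infty$ (the places of multiplicative reduction of the Legendre curve), and the double zero of $\alpha^2$ along the supersingular locus cancels the simple pole that $du/u$ acquires from the ramification of $I_{p,d} \to \P^1_u$ at supersingular points, so the differential is regular and nonzero off the three bad places. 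A single residue computation at, say, $u=0$, compared with the known Tate period of the Legendre curve there, fixes the constant of proportionality and gives the stated formula.

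For the Selmer group, I would use the embedding $\Sel(F_{p,d}, \Fr_E) \hookrightarrow F_{p,d}^\times/F_{p,d}^{\times p} \cong \Omega^1_{log}(F_{p,d})$ and the description of the image from \cite{Ulmer91} as the subspace of regular logarithmic differentials on $I_{p,d}$ satisfying prescribed local conditions at the bad places of $E$. By construction $dq/q$ lies in this Selmer group, so $\Fp \cdot dq/q \subseteq \Sel(F_{p,d}, \Fr_E)$ is automatic, and the task reduces to the upper bound $\dim_{\Fp}\Sel(F_{p,d}, \Fr_E) \le 1$. I would obtain the bound by combining the factorization $[p] = V \circ \Fr$, which yields a three-term sequence relating the Selmer groups for $\Fr$, $V$, and $p$, with the inflation–restriction sequence for $\Sel(K_d, p_E) \to \Sel(F_{p,d}, p_E)$. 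Proposition~\ref{prop:sel-p-dim} gives the size of the $p$-Selmer group over $K_d$, Theorem~\ref{thm:main} pins down $E(K_d)/pE(K_d)$, and the fact that $E^{(p)}(F_{p,d})[p]$ is generated by the class of $\alpha$ identifies the single new contribution to $\Sel(\Fr)$ that is not already visible from $\Sel(p)$ upstairs.

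The main obstacle I anticipate is controlling the local conditions at the supersingular places of $\P^1_t$, since these are precisely the places where $F_{p,d}/K_d$ is ramified and where the various Selmer conditions can fail to transport cleanly. Everywhere else the extension is \'etale over the good-reduction locus, and standard inflation–restriction handles those places without incident. At each supersingular place, I expect the analysis to produce exactly the one-dimensional contribution coming from the class of $\alpha$ in $F_{p,d}^\times/F_{p,d}^{\times p}$, whose $d\log$ is by construction an $\Fp$-multiple of $dq/q$. Once this local calculation is carried out, the inclusion $\Fp \cdot dq/q \subseteq \Sel(F_{p,d}, \Fr_E)$ is seen to be an equality, completing the proof.
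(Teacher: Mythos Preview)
Your approach inverts the paper's logic and, in doing so, makes the argument harder than necessary while introducing an error in the divisor computation.

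The paper proceeds in the opposite order: it first computes $\Sel(F_{p,d},\Fr_E)$ directly, by invoking \cite[Thm.~7.6]{Ulmer91} to identify this Selmer group with the space of logarithmic differentials on $I_{p,d}$ having simple poles at the places of multiplicative reduction and \emph{zeros of order $p$} at the supersingular places. A short divisor calculation on the cover $I_{p,d}\to\P^1_u$ then shows that this space is one-dimensional, spanned by $\alpha^2\,du/(u(t-1))$. Since $dq/q$ lies in the Selmer group (as the image of the chosen $p$-torsion point on $E^{(p)}$), it must be a nonzero $\Fp$-multiple of this differential. The paper does not actually verify that the constant is $1$; it explicitly omits this, noting that only the $\Fp$-line matters for the application. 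So your residue computation to pin down the constant is more than is required.

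Your divisor analysis at the supersingular locus is incorrect. The pullback of $du$ under the totally ramified (degree $p-1$) cover acquires a \emph{zero} of order $p-2$, not a pole, and $\alpha^2$ contributes a zero of order $2$; together they give a zero of order $p$, not a point where the differential is ``regular and nonzero.'' This zero of order $p$ is precisely the local condition singled out by \cite{Ulmer91} at supersingular places, and recognizing it is what makes the one-dimensionality of the Selmer group immediate.

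Finally, your proposed route to the upper bound $\dim_{\Fp}\Sel(F_{p,d},\Fr_E)\le 1$---via the $\Fr$/$V$/$p$ factorization, inflation--restriction from $K_d$ to $F_{p,d}$, Proposition~\ref{prop:sel-p-dim}, and Theorem~\ref{thm:main}---is not circular, but it is a substantial detour through much heavier machinery (and through the very local analysis at supersingular places you flag as the main obstacle). The paper's direct computation of the space of differentials with the prescribed divisor avoids all of this.
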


\begin{proof}
  The same argument as in \cite[Thm.~7.6]{Ulmer91} shows that the
  Selmer group $\Sel(F_{p,d},\Fr_E)$ is isomorphic to the group of
  logarithmic differentials with simple poles at places where $E$ has
  multiplicative reduction and zeros of order $p$ at places where $E$
  has supersingular reduction.  An easy exercise using the covering
  $I_{p,d}\to\P^1_u$ shows that the only such differentials are the
  $\Fp$-multiples of $\alpha^2du/u(t-1)$.  Since $dq/q$ lies in this
  Selmer group (as the image of the chosen point of order $p$ on
  $E^{(p)}(F_{p,d})$), it is a non-zero multiple of
  $\alpha^2du/u(t-1)$.  Which multiple it is will not be material for
  what follows, so we omit the check that $dq/q$ is
  $\alpha^2du/u(t-1)$ on the nose.
\end{proof}

\subsection{$p$-torsion in $E(K_d)/V_d$ via flat
  cohomology}\label{ss:coboundary} 
The results of \cite{Ulmer91} and \cite{Broumas97} also afford good
control on the $p$-torsion in $E(K_d)/V_d$.  We continue with the
notation of the previous subsection.  In particular, we assume that
$d=p^f+1$. 

We state our result in terms of the decomposition of $E(K_d)/V_d$ as a
module over $\Zp[G]$ (in fact over $\Fp[G]$ since we are concerned
only with the $p$-torsion).

\begin{prop}\label{prop:(E/V)[p]}
We have
$$\ker\left(p:E(K_d)/V_d\to E(K_d)/V_d\right)^o=\begin{cases}
\Gamma_o/p&\text{if the word of $o$ is not $u^fl^f$}\\
0&\text{if the word of $o$ is $u^fl^f$}
\end{cases}$$
\end{prop}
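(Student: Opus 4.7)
The plan is to obtain this statement as an independent consequence of the flat-cohomology results of Subsection~\ref{ss:Selp}, thereby checking the crystalline computation of Theorem~\ref{thm:index}. The first step is to apply the snake lemma to
$$0 \to V_d \to E(K_d) \to E(K_d)/V_d \to 0$$
with multiplication by $p$. Since $E(K_d)$ has no nontrivial $p$-torsion (its torsion has order prime to $p$), this produces a $G$-equivariant exact sequence
$$0 \to (E(K_d)/V_d)[p] \to V_d/p \xrightarrow{\iota} E(K_d)/p \to (E(K_d)/V_d)/p \to 0,$$
and the task reduces to computing $(\ker\iota)^o$ for each orbit $o\in O$.

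Next I would identify the source and target of $\iota^o$ with $\Gamma_o/p$. The source is handled by Remark~\ref{rem:E(K_d)-as-G-module}, which gives $V_d\otimes\Zp\cong\bigoplus_{o\in O}\Gamma_o$, so $(V_d/p)^o\cong\Gamma_o/p$. For the target I would use the fundamental descent sequence
$$0 \to E(K_d)/p \to \Sel(K_d,p_E) \to \sha(E/K_d)[p] \to 0$$
in conjunction with Corollary~\ref{cor:sha-p}, which says the multiplicity of $\Gamma_o/p$ is $k$ in $\Sel(K_d,p_E)$ and $k-1$ in $\sha(E/K_d)[p]$; subtracting, the multiplicity in $E(K_d)/p$ is $1$, so $(E(K_d)/p)^o\cong\Gamma_o/p$ as well. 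Since $\Gamma_o/p$ is absolutely irreducible by Proposition~\ref{prop:G-reps}(3), the induced map $\iota^o:\Gamma_o/p\to\Gamma_o/p$ is either zero or an isomorphism. Hence $(E(K_d)/V_d)[p]^o$ is $\Gamma_o/p$ or $0$ according to which case holds, and it remains only to decide the dichotomy in terms of the word of $o$.

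To decide the dichotomy I would compute the image in $\Sel(K_d,p_E)$ of the generating point $P=(u,u(u+1)^{d/2})$ of $V_d$ (together with its $G$-conjugates) under the Kummer-type connecting homomorphism, express the result as an $\Fq$-linear combination of the explicit exact differentials $\omega_{i,j}$ from Corollary~\ref{cor:diffs}, and then apply the orbit projector $\pi_o$. The logarithmic differential $dq/q=\alpha^2du/(u(u^d-1))$ of Lemma~\ref{lemma:dq/q} provides the concrete bridge between the point $P$ and its descent class, and this is exactly the kind of explicit descent carried out in \cite{Broumas97} and \cite{Ulmer91}. One expects the $o$-component of the image to be nonzero precisely for the orbit whose word is $u^fl^f$, which is equivalent (via Lemma~\ref{lemma:counting} and the definition of height) to $ht(o)=f$.

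The hard part is the final step: extracting the orbit decomposition of the descent class of $P$ from the explicit differential calculation. Matching the $\Fp[G]$-module structure of $V_d/p$ (cyclic with the relations $2\sum_iP_i=2\sum_i(-1)^iP_i=0$ of \cite[Cor.~4.3]{Ulmer14a}) against the orbit decomposition of the basis in Corollary~\ref{cor:diffs} requires care, but once it is established that the sole orbit on which $\iota^o$ is an isomorphism is the one with word $u^fl^f$, the proposition follows from the exact sequence at the top of the plan.
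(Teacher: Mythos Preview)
Your plan is essentially the paper's own proof: the snake-lemma reduction, the passage to $\Sel(K_d,p_E)$, and the use of Broumas's explicit coboundary formula to compute the image of the generator $P(u)$ are exactly what the paper does; the paper then carries out the ``hard part'' you flag by computing $\mu(P(u))$ modulo $t\Fq[u]$ and matching its monomials $u^\ell$ against the basis $f_{i,j}$ of Corollary~\ref{cor:diffs}. One small remark: your route to $(E(K_d)/p)^o\cong\Gamma_o/p$ via Corollary~\ref{cor:sha-p} is needlessly indirect (and mildly circular, since that corollary already invokes the structure of $E(K_d)/p$); it follows directly from Remark~\ref{rem:E(K_d)-as-G-module}, which you already cite for $V_d$.
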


\begin{proof}
First, we note that an easy application of the snake lemma shows that 
$$\ker\left(p:E(K_d)/V_d\to E(K_d)/V_d\right)\cong
\ker\left(V_d/p\to E(K_d)/p\right).$$ 
Moreover, we have an injection
\begin{equation}\label{eq:coboundary}
E(K_d)/p\into\Sel(K_d,p_E)
\end{equation}
and so it will suffice to compute the kernel of the composed map
$V_d/p\to\Sel(K_d,p_E)$.  We will do this by using Broumas's wonderful
formula for \eqref{eq:coboundary} and the explicit calculation of
$\Sel(K_d,p_E)$ in the preceding subsection.

Recall that $V_d/p$ is isomorphic as $\Fp[G]$-module to $\oplus_{o\in
  O}\Gamma_o/p$ and that this $\Fp[G]$-module is cyclic, generated by
the point $P(u)=(u,u(u+1)^{d/2})$ defined in \cite[\S3]{Ulmer14a}.

As noted in the previous section, we have 
$$\Sel(K_d,p_E)\cong
H^0(I_{p,d},\Omega^1_{i_{p,d}})^{\CC=0,\psi^{-1}}.$$
Using \cite[Prop.~5.3]{Ulmer91}, the space of exact differentials
above can be identified with a subgroup of the additive group of $K_d$
via the map $\omega\mapsto\alpha^p\omega/(dq/q)$ where $dq/q$ is the
differential computed in Lemma~\ref{lemma:dq/q} and $\alpha$ is a root
of $\alpha^{p-1}=A$.  The Main Theorem of \cite{Broumas97} gives an
explicit formula for the composition
$$\mu: E(K_d)\to\Sel(K_d,p_E)\to K_d.$$

To state the result, write
$$\left(x(x+1)(x+t)\right)^{(p-1)/2}=x^pM(x)+Ax^{p-1}+\text{ lower
  order terms}$$
and let $\wp_A(z)=z^p-Az$.  Then (after a considerable amount of
boiling down), Broumas's formula says
$$\mu(P(u))=u(u+1)^{(p^f+1)/2}M(u)-\wp_A\left(u(u+1)^{(p^f-1)/2}\right).$$
(We note that there is a typo in Broumas's paper in the case $p=3$.
Namely, in formula (36) on page 140 of \cite{Broumas97}, 
$2\mathcal{D}a_2/a_2+\mathcal{D}a_6/a_6$ should be replaced with 
$\left(2\mathcal{D}a_2/a_2+\mathcal{D}a_6/a_6\right)x$.)

The last displayed quantity is an element of the polynomial ring
$\Fq[u]$, and we are going to compute it modulo the ideal generated by
$t=u^d$.

To see that this will suffice for our purposes, recall from
Corollary~\ref{cor:diffs} the exact differentials $\omega_{i,j}$ giving
a $\Fq$ basis for the Selmer group.  Using Lemma~\ref{lemma:dq/q},
we find that
$$f_{i,j}:=\alpha^p\omega_{i,j}/(dq/q)=u^{1+i+pj}t^{-i}h_i(t)(t-1)$$
for $0\le i\le(p-3)/2$ and $0\le j\le(p^{f-1}-1)/2$.  Thus in order to
write $\mu(P(u))$ in terms of the $f_{i,j}$, it suffices to know
$\mu(P(u))$ modulo $t$.

Straightforward computation from the definition shows that 
$$M(u)\equiv\frac{(u+1)^{(p-1)/2}-1}u\quad\text{and}\quad 
A\equiv1\pmod{t\Fq[u]}.$$
Thus
\begin{align*}
\mu(P(u))&\equiv(u+1)^{(p^f+p)/2}-(u+1)^{(p^f+1)/2}
-u^p(u^p+1)^{(p^f-1)/2}+u(u+1)^{(p^f-1)/2}\\
&=(u+1)^{(p^f-p)/2}\left((u+1)^p-(u+1)^{(p+1)/2}-u^p(u^p+1)^{(p^f-p^{f-1})/2}
+u(u+1)^{(p-1)/2}\right)\\
&\equiv(u+1)^{(p^f-p)/2}\left(1-(u+1)^{(p-1)/2}\right)\\
&=-u\left(\sum_{j=0}^{(p-3)/2}\binom{(p-1)/2}{i+1}u^i\right)
(1+u^p)^{(p-1)/2}\cdots(1+u^{p^{f-1}})^{(p-1)/2}.
\end{align*}
(To pass from the second line to the third, note that the sum of the
first and third terms inside the large parentheses is congruent to 1
modulo $t$.)

The last expression makes it clear that $\mu(P(u))\pmod{t\Fq[u]}$ is
the sum of terms $cu^\ell$ where $u^\ell$ appears with non-zero
coefficient if and only if $\ell=1+\sum i_kp^{k-1}$ with
$i_1\le(p-3)/2$ and $i_k\le(p-1)/2$ for $2\le k\le f$.  It follows
that $\mu(P(u))$ is a linear combination (with non-vanishing
coefficients) of the $f_{i,j}$ where $\ell=1+i+pj$ satisfies the
same condition.

Now by Proposition~~\ref{prop:G-reps}(5), the $\Fp[G]$-modules $\Fq
u^\ell$ with $\ell$ satisfying the conditions just above are pairwise
non-isomorphic.  Thus the $\Fp[G]$-submodule of the Selmer group
generated by $\mu(P(u))$ is the direct sum of the corresponding
$\Gamma_o/p$.  The orbits in question are precisely those with word
$u^fl^f$, and this shows that the image of $V_d/p\to E(K_d)/p$ is
isomorphic to
$$\bigoplus_{\substack{o\in O\\w(o)=u^fl^f}}\Gamma_o/p.$$
The kernel is thus the sum of the $\Gamma_o/p$ where $o$ runs through
orbits with words not equal to $u^fl^f$.  This completes the proof of
the proposition.
\end{proof}

The proposition allows us to recover large parts of
Theorem~\ref{thm:main}:  It shows that $(E(K_d)/V_d$ is non-trivial if
and only if $f>2$, and together with Corollary~\ref{cor:sha-p} it
shows that $\sha(E/K_d)$ is not isomorphic to $(E(K_d)/V_d)^2$ as an
abelian group if $f>4$.

\subsection{An extension to $p=2$}
In this subsection we explain how the main results of the paper can be
extended to the case where $p=2$.

To that end, let $p$ be an arbitrary prime number and let $E'$ be the
elliptic curve over $K'=\Fp(t')$ defined by
$$y^2+xy+t'y=x^3+t'x^2.$$
As explained in \cite[\S11]{Ulmer14a} and
\cite[\S11]{ConceicaoHallUlmer14}, if $p>2$ and we identify $K'$ and
$K$ by sending $t'$ to $t/16$, then $E$ and $E'$ are 2-isogenous.
Moreover, for $d=p^f+1$, the fields $K'_d=\Fp(\mu_d,t^{\prime 1/d})$
and $K_d=\Fp(\mu_d,t^{1/d})$ can be identified as extensions of $K$.
Having done so, one finds that the subgroup $V'_d\subset E'(K'_d)$
defined in \cite[8.10(3)]{Ulmer13a} is carried over to $V_d\subset
E(K_d)$.  It follows that Theorem~\ref{thm:main} and its refinements
in Section~\ref{s:refined-results} hold for $E'(K'_d)/V'_d$ and
$\sha(E'/K'_d)$.

Now the equation above also defines an elliptic curve when $p=2$.
Moreover, the N\'eron model of $E'/K'_d$ is dominated by a product of
curves (two copies of the curve $\CC'$ over $\Fp(\mu_d)$ defined by
$z^d=x(1-x)$), see \cite[11.2(5)]{ConceicaoHallUlmer14}.  Thus the
methods of this paper may be used to compute $E'(K'_d)/V'_d$ and
$\sha(E'/K'_d)$ as modules over $\Zp[\gal(K'_d/K)]$.  Most of the
results have the same form and the proofs are mostly parallel, so we
will briefly discuss some of the differences and then state the
results.

The analogue of the geometric analysis leading to
Theorem~\ref{thm:reduction-to-S}  gives an isomorphism 
$$\left(E'(K'_d)/tor\right)\tensor\Z[1/d]\isoto
\left(\NS'(\CC\times\CC)\tensor\Z[1/d]\right)^{\mu_d}$$ where the
$\mu_d$ in the exponent is acting anti-diagonally. (In fact the most
natural way to state this would be with the arrow going the other way
and with the target being the subgroup of $E'(K'_d)$ generated by the
point in \cite[Thm.~8.1(2)]{Ulmer13a} and its Galois conjugates.  This
subgroup is free of rank $d-1$ and is a complement to the torsion
subgroup.)  The analogue of the isomorphism of Tate-Shafarevich and
Brauer groups in Theorem~\ref{thm:reduction-to-S}(2) goes through for
$E'$ without change.

The analysis of the arithmetic of a product in
Section~\ref{s:arithmetic-of-CxD} was done there also for $p=2$, and
the description of the cohomology of $\CC$ in Section~\ref{s:H1(C)}
works for $\CC'$ as well with very minor changes.  The $p$-adic
exercises in Section~\ref{s:exercises} also works essentially unchanged.

Altogether, one finds that the obvious analogues of
Theorem~\ref{thm:main} parts (1) through (4) hold for $E'/K'_d$.
Similarly for the refined Theorems~\ref{thm:index} and \ref{thm:sha}. 

There are a few differences to report as well.  For example, part (5)
of Theorem~\ref{thm:main} does not extend to $p=2$.  Indeed, the
polynomial appearing there does not even take an integral values at
$p=2$.  The correct statement can be deduced from the proof in
Subsection~\ref{ss:interpolation} by noting that the number of
elements in $\Z/d\Z\setminus\{0\}$ with a given pattern is 1 (rather
than $(p-1)^a(p+1)^b/2^f$ as in Lemma~\ref{lemma:counting}).

The results of Subsections~\ref{ss:Selp} and \ref{ss:coboundary} also
extend to $E'$.  One finds that the order of $\Sel(K'_d,p_{E'})$ has
order $2^{f-1}f+1$.  
The refined results of Corollary~\ref{cor:sha-p} and
Proposition~\ref{prop:(E/V)[p]} hold as stated.  However, the details
of the $2$-descent have a different flavor because $E'$ has a
$2$-torsion point over $K'$ so the kernel of $p$ is the direct sum of
the kernels of Frobenius and Verschiebung and the differential $dq/q$
is zero.  We leave the details as an exercise for the
interested reader.

\subsection{Higher genus}
Let $p$ be a prime number, $r$ and $d$ integers relatively prime to
$p$, and consider the curve $X$ defined by
$$y^r=x^{r-1}(x+1)(x+t)$$
over $\Fp(t)$ and its extensions $\Fq(u)$ with $u^d=t$.  The genus of
$X$ is $r-1$, and its Jacobian $J$ has interesting arithmetic over
$\Fq(u)$ for many values of $d$.

For simplicity we will only discuss the case where $r$ divides $d$,
$d=p^f+1$, and $\Fq=\Fp(\mu_d)$.  We write $K_d$ for $\Fq(u)$.  In 
\cite{AIMgroup}, explicit divisors are given on $X$ whose classes
in $J(K_d)$ generate subgroup $V_d$ of rank $(r-1)(d-2)$ and finite,
$p$-power index.  Moreover, it is shown there that we have a class
number formula
$$|\sha(J/K_d)|=[J(K_d):V_d]^2.$$

Most of the results of this paper extend to this situation and give an
explicit calculation of $\sha(J/K_d)$ and $J(K_d)/V_d$ as modules over
the group ring $\Zp[G]$ where $G=\mu_d\sdp\gal(\Fq/\Fp)$.  

Indeed, we saw in Subsection~\ref{ss:Xr} that the minimal regular
model $\XX\to\P^1_u$ of $X/K_d$ is birational to the quotient of a
product of curves by a finite group.  The product is $\SS=\CC\times\CC$
where $\CC$ is the smooth proper curve over $\Fq$ defined by
$z^d=x^r-1$.  We deduce from this a connection between the
Mordell-Weil and Tate-Shafarevich groups of $J$ and the N\'eron-Severi
and Brauer groups of $\SS$, as at the end of Subsection~\ref{ss:Xr}.
These groups are described in crystalline terms in
Section~\ref{s:arithmetic-of-CxD}. 

As we saw in Subsection~\ref{ss:Cr}, the crystalline cohomology of
$\CC$ breaks up into lines indexed by the set
$$S=\left\{\left.(i,j)\in(\Z/d\Z)\times(\Z/r\Z)\right|
  i\neq0,j\neq0,\<i/d\>+\<j/r\>\neq1\right\}.$$
The subspace $H^0(\CC/\Zp,\Omega^1_{\CC/\Zp})$ is generated by the
lines indexed by $(i,j)$ with $\<i/d\>+\<j/r\><1$. Calling this subset
$A$ and letting $B=S\setminus A$, we may use $A$ and $B$ to define
words associated to orbits of $\<p\>$ acting diagonally on $S$ and to
define a notion of balanced, as discussed at the end of
Subsection~\ref{ss:Cr}.

The $p$-adic exercises of Section~\ref{s:exercises} go through
essentially unchanged, and interpreting ``balanced'' as above, we find
that Theorem~\ref{thm:main} parts (1) through (4), and the refined
results in Theorems~\ref{thm:disc}, \ref{thm:index}, and \ref{thm:sha}
hold as stated.  An interpolation result, as in part (5) of
Theorem~\ref{thm:main}, also holds with a polynomial $F$ which depends
on $r$ and $f$, but not on $p$.

Exploring the arithmetic of $J$ for other values of $r$ and $d$ looks
like an interesting project.  In particular, one may ask about 
other systematic sources of non-torsion points on $J$, as in
\cite{ConceicaoHallUlmer14}, and about the relative abundance or
scarcity of balanced rays for fixed $p$ and varying $r$ and $d$, as in
\cite{PomeranceUlmer13}.

\bibliography{database}{}
\bibliographystyle{alpha}

\vfil\eject

\section{Correction}\label{s:correction}
In this section (added after publication) we correct an error in
Section~5, and we discuss changes required in
the $p$-adic exercises of Section~7.  None of the main
results of the paper are affected.

\subsection{Counterexample}
Let $k$ be a finite field of characteristic $p$, let $W(k)$ be its
ring of Witt vectors, and let $\sigma:W(k)\to W(k)$ be its Frobenius
automorphism.  Let $\CC$ and $\DD$ be smooth, projective curves over
$k$.

Part (2) of Theorem~5.2 is incorrect as stated.  For a counterexample,
let $\CC=\DD=E$ where $E$ is an ordinary elliptic curve over $k=\Fq$,
and take $n=1$.  Using well-known properties of elliptic curves, one
finds that
$$\left(\left(H^1(\CC)\tensor_W H^1(\DD)\right)^{F=p}\right)/p
\cong\Fp^2$$
and
$$\left(H^1(\CC)/p^n\tensor_W H^1(\DD)/p\right)^{F=V=p}\cong\Fq^2.$$
On the other hand, it is known (Milne, Inv. Math. {\bf 6}, 1968,
p.~102) that the Brauer group of $E^2$ has order 
$$[\en_{\Fq}(E):\Z[\pi]]^2$$
where $\pi$ is the Frobenius endomorphism of $E$.  Since the
discriminant of $\Z[\pi]$ is prime to $p$, so is the displayed index,
and the $p$ part of the Brauer group is thus trivial.  This shows the
sequence in Theorem~5.2 part (2) is not exact in general.  The problem
turns out to be that the middle term is not correct.

\subsection{Corrected Theorem}
We reformulate Theorem~5.2.  Although part (1) is correct as stated,
we give an equivalent formulation which is more parallel to the
correct statement of part (2).

Let $W=W(k)$ and recall that $A$ denotes the Dieudonn\'e ring
$W\{F,V\}$ where $FV=VF=p$, $F\alpha=\sigma(\alpha)F$, and
$\alpha V=V\sigma(\alpha)$ for $\alpha\in W$.  Recall also that
$\NS'(\CC\times_k\DD)$ denotes the orthogonal complement in the
N\'eron-Severi group $\NS(\CC\times_k\DD)$ of the classes of
$P\times\DD$ and $\CC\times Q$ where $P$ and $Q$ are $k$-rational
divisors of degree 1 on $\CC$ and $\DD$ respectively.

\begin{thm}[Corrected Theorem~5.2]\label{thm:corrected-cohom-of-product-fixed}
\mbox{}
\begin{enumerate}
\item There is a functorial isomorphism
$$\NS'(\CC\times_k\DD)\tensor\Zp\isoto
\Hom_A\left(H^1(\DD),H^1(\CC)\right).$$
\item There is a functorial exact sequence
$$0\to \Hom_A\left(H^1(\DD),H^1(\CC)\right)/p^n\to
\Hom_A\left(H^1(\DD)/p^n,H^1(\CC)/p^n\right)
\to\Br(\CC\times_k\DD)_{p^n}\to0.$$
\end{enumerate}
Here $\Hom_A$ denotes homomorphisms of $A$-modules,
and ``functorial'' means that the displayed maps are equivariant for
the action of $\aut(\CC)\times\aut(\DD)$.
\end{thm}

The published proof of Theorem~5.2 minus the second half of the last
sentence proves the statement above.  The second
half of the last sentence, written in an overzealous desire for
symmetry, purports to go from the ``$\Hom$'' formulation above to a
``$\tensor$'' formulation, and this introduces an error.  Specifically,
the last displayed equation of the proof is not correct.  Omitting
this last translation yields the correct statement and proof.

\subsection{More on  Frobenius}
Before explaining the changes needed to the $p$-adic exercises of
Section~7, we add one detail on the action of Frobenius on the
crystalline ohomology group discussed in Section~6.

We use the notations of that section.  In particular, $\CC$ is the
smooth projective model of the affine curve $z^d=x^2-1$,
$H^1_{crys}(\CC/\Zp)$ is its first crystalline cohomology group, and
$e_i$ ($0<i<d$, $i\neq d/2$) is the basis of $H^1_{crys}(\CC/\Zp)$
appearing in Proposition~6.4.  (Here and below, we read the indices
modulo $d$.)  We showed that the action of Frobenius on
$H^1_{crys}(\CC/\Zp)$ is given by $F(e_i)=c_ie_{pi}$, where $c_i\in\Zp$
satisfies
$$\ord(c_i)=\begin{cases}
0&\text{if $i>d/2$}\\
1&\text{if $i<d/2$.}\end{cases}$$

\begin{lemma}\label{eq:c-i-minus-i}
$$c_ic_{-i}=\begin{cases}
p&\text{if $i<d/2$ and $pi<d/2$}\\
p&\text{if $i>d/2$ and $pi>d/2$}\\
-p&\text{if $i<d/2$ and $pi>d/2$}\\
-p&\text{if $i>d/2$ and $pi<d/2$}\\
\end{cases}$$
\end{lemma}

\begin{proof}
Let $f\in H^2(\CC/\Zp)$ be the cup product $e_1\cup e_{-1}$.  The
content of part (1) of Proposition~6.4 is that
$$e_i\cup e_{-i}=
\begin{cases}
f&\text{if $i<d/2$}\\
-f&\text{if $i>d/2$.}
\end{cases}$$
If $i<d/2$, we have
$$pf=F(f)=F(e_i\cup e_{-i})=c_ic_{-i}e_{pi}\cup e_{-pi},$$
and
$$e_{pi}\cup e_{-pi}=\begin{cases}
f&\text{if $pi<d/2$}\\
-f&\text{if $pi>d/2$.}
\end{cases}$$ 
Comparing the last two displays yields the first and
third cases of the lemma.  For the second and fourth, we have
$i>d/2$,
$$-pf=F(-f)=F(e_i\cup e_{-i})=c_ic_{-i}e_{pi}\cup e_{-pi},$$
and
$$e_{pi}\cup e_{-pi}=
\begin{cases}
f&\text{if $pi<d/2$}\\
-f&\text{if $pi>d/2$.}
\end{cases}$$
Comparing the last two displays yields the remaining two cases
of the lemma.
\end{proof}

Define
$$d_i:=c_i/p^{\ord_p(c_i)}.$$
We record two useful facts about the $d_i$:  First,
the $d_i$ are $p$-adic units, and by the lemma we have
\begin{equation}\label{eq:di-minus-i}
d_id_{-i}=\pm1
\end{equation}
where the sign is $+1$ if $i$ and $pi$ lie in the same half of the
interval $[0,d]$ and $-1$ if they lie in opposite halves.  

Second, by Proposition 6.4(4-5), if $o$ is an orbit of Frobenius on
$\Z/d\Z$ with $\gcd(d,o)<d/2$ and $p$ is balanced modulo $d/\gcd(d,o)$,
then
\begin{equation}\label{eq:prod-di}
\prod_{i\in o}d_i^2=1.
\end{equation}

\subsection{Modified $p$-adic exercises}
We now explain the changes needed in Sections~7.1, 7.2, and 7.4 when
we replace Theorem~5.2 with
Theorem~\ref{thm:corrected-cohom-of-product-fixed}.

We use the same notations as in Section~7.1:  
Write $W$ for the Witt vectors $W(\Fq)$, $W_n$ for $W/p^n$,
$H^1(\CC)$ for $H^1_{crys}(\CC/W)$, and $H^1(\DD)$ for
$H^1_{crys}(\DD/W)$ where $\CC=\DD$ is the curve over $\Fq$ studied in
Section~6.  The product $\CC\times_\Fq\DD$ carries an
action of $\Delta=\mu_2\times\mu_d$ acting ``anti-diagonally'' as well
as an action of $G=\mu_d\sdp\gal(\Fq/\Fp)$ acting on the factor $\CC$.

Our goal is to compute 
$$H':=\Hom_A\left(H^1(\CC),H^1(\DD)\right)^\Delta$$
and 
$$H'_n:=\Hom_A\left(H^1(\CC)/p^n,H^1(\DD)/p^n\right)^\Delta.$$  

For an orbit $o\in O_{d,p}$, we write $H^{\prime o}$ and
$H^{\prime o}_n$ for the $o$ parts of the corresponding groups, i.e.,
for the images of the projector $\pi_o$ of Section~2.8 on $H'$ or
$H'_n$.

Since
$$\Hom_A\left(H^1(\CC),H^1(\DD)\right)^\Delta\subset
\Hom_W\left(H^1(\CC),H^1(\DD)\right)^\Delta$$
and
\begin{align*}
\Hom_A\left(H^1(\CC)/p^n,H^1(\DD)/p^n\right)^\Delta&\subset
\Hom_W\left(H^1(\CC)/p^n,H^1(\DD)/p^n\right)^\Delta\\
&=\left(\Hom_W\left(H^1(\CC),H^1(\DD)\right)^\Delta\right)/p^n
\end{align*}
we first consider 
$$M':=\Hom_W(H^1(\CC),H^1(\DD))^\Delta.$$

Recall the $W$-basis $e_i$ ($0<i<d$, $i\neq d/2$) of
$H^1(\CC)=H^1(\DD)$ from Proposition~6.4.  For $0<i,j<d$,
$i,j\neq d/2$, let $\varphi_{ij}\in\Hom_W(H^1(\CC),H^1(\DD))$ be the
element with
$$\varphi_{ij}(e_\ell)=
\begin{cases}
e_i&\text{if $\ell=j$}\\
0&\text{if $\ell\neq j$.}
\end{cases}
$$
Then the $\varphi_{ij}$ form a $W$-basis of
$\Hom_W(H^1(\CC),H^1(\DD))$.  The submodule commuting with the
anti-diagonal action of $\Delta$, i.e.,
$M'$, is spanned by the $\varphi_{i,-i}$ with $0<i<d$, $i\neq d/2$.

Now we fix an orbit $o\in O_{d,p}$ and assume that $\gcd(o,d)<d/2$ and
that $p$ is balanced modulo $d/\gcd(o,d)$.  Let $i\in o$ be the
standard base point, and for $j=0,\dots,|o|-1$ define
$$f_{ip^j}=\left(\prod_{\ell=0}^{j-1}d_{ip^\ell}^{2}\right)\varphi_{ip^j,-ip^j}.$$
The $f_{ip^j}$ form a new basis of $M^{\prime o}$, the part of $M'$
cut out by the projector $\pi_o$.

It follows from equation~\eqref{eq:prod-di} that $f_{ip^j}$ only
depends on the class of $ip^j$ modulo $d$, i.e., we may read the
index $j$ modulo $|o|$ without any ambiguity.  

We now turn to computing $H^{\prime o}$ and $H_n^{\prime o}$.
Consider a typical element
$$c=\sum_{j=0}^{|o|-1}\alpha_jf_{ip^j}$$
where $\alpha_j\in W$ or $W_n$ and we read the index $j$ modulo $|o|$.

Applying $F\circ c$ and $c\circ F$ to $e_{-ip^j}$ for
$j=0,\dots,|o|-1$, we see that $F\circ c=c\circ F$ if and only if
\begin{equation}\label{eq:Fc=cF}
\sigma(\alpha_j)c_{ip^j}=\alpha_{j+1}d_{ip^j}^{2}c_{-ip^j}
\end{equation}
for all $j$.  A similar calcuation shows that $V\circ c=c\circ V$ if
and only if
$$\sigma(\alpha_j)\left(\frac p{c_{-ip^j}}\right)
=\alpha_{j+1}d_{ip^j}^{2}\left(\frac p{c_{ip^j}}\right),$$
and Proposition 6.4(4) and Lemma~\ref{eq:c-i-minus-i} show that this
equation is equivalent to \eqref{eq:Fc=cF}.

We now simplify equation~\eqref{eq:Fc=cF}, separating into four cases
depending on the positions of $ip^j$ and $ip^{j+1}$ in $[0,d]$.  More
precisely, recall the word $w=w_1\cdots w_{|o|}$ attached to $o$: the
letter $w_j$ is $l$ if the least positive residue of $ip^{j-1}$ modulo
$d$ is $>d/2$ and it is $u$ if the residue is $<d/2$.  Using
Proposition 6.4(4) and equation~\eqref{eq:di-minus-i}, we see that
equation~\eqref{eq:Fc=cF}  is equivalent to the equations
\begin{align*}
+\sigma(\alpha_j)&=p\alpha_{j+1}&\text{if }w_{j+1}w_{j+2}&=ll\\
-\sigma(\alpha_j)&=p\alpha_{j+1}&\text{if }w_{j+1}w_{j+2}&=lu\\
-p\sigma(\alpha_j)&=\alpha_{j+1}&\text{if }w_{j+1}w_{j+2}&=ul\\
+p\sigma(\alpha_j)&=\alpha_{j+1}&\text{if }w_{j+1}w_{j+2}&=uu.
\end{align*}

Note that when $w_{j+1}=l$, $\alpha_{j+1}$ determines $\alpha_j$, and when
$w_{j+1}=u$, $\alpha_j$ determines $\alpha_{j+1}$.  Thus we may eliminate
many of the variables $\alpha_j$.
More precisely, write the word $w$ in exponential form:
$w=u^{e_1}l^{e_2}\cdots l^{e_{2k}}$.  Setting $\beta_0=\alpha_0$ and
$$\beta_j=\alpha_{e_1+e_2+\cdots+e_{2j}}$$
for $1\le j\le k$ (so that $\beta_k=\beta_0$), the class $c$ is entirely
determined by the $\beta$'s.  Indeed, for $\sum_{i=1}^{2j}e_i\le \ell<
\sum_{i=1}^{2j+1}e_i$, we have
$$\alpha_\ell=(\sigma p)^{\ell-\sum_{i=1}^{2j}e_i}\beta_j$$
and for $\ell=\sum_{i=1}^{2j+1}e_i$, we have
$$\alpha_\ell=-(\sigma p)^{e_{2j+1}}\beta_j.$$
On the other hand, for
$\sum_{i=1}^{2j+1}e_i\le\ell<\sum_{i=1}^{2j+2}e_i$, we have
$$\alpha_\ell=-(\sigma^{-1}p)^{\sum_{i=1}^{2j+2}e_i-\ell}\beta_{j+1}$$
and for $\ell=\sum_{i=1}^{2j+2}e_i$, we have
$$\alpha_\ell=\beta_{j+1}.$$

The conditions on the $\alpha$'s translated to the $\beta$'s become
\begin{align}\label{eq:basic}
(\sigma p)^{e_1}\beta_0&=(\sigma^{-1}p)^{e_2}\beta_1\notag\\
(\sigma p)^{e_3}\beta_1&=(\sigma^{-1}p)^{e_4}\beta_2\notag\\
&\vdots\\
(\sigma p)^{e_{2k-1}}\beta_{k-1}&=(\sigma^{-1}p)^{e_{2k}}\beta_k\notag
\end{align}

These are exactly the ``basic equations'' (7.4.2) and the rest of the
calculation of $H^{\prime o}$ and $H_n^{\prime o}$ proceeds exactly as
in Sections~7.5 and 7.6.

\subsection{A few typos}
We take this opportunity to correct a few other typos.

In Proposition~6.4, part (2), ``$\lfloor(d+1)/2\rfloor$'' should be
``$\lceil(d+1)/2\rceil$''.

In the penultimate display of Section~7.2, on the right hand side,
``$f_{pi}$'' should be ``$f_{pj}$''.

In Section~7.4, all occurences of ``$w_j$'' should be ``$w_{j+1}$''.

In Proposition~7.5.1, ``$p$ is balanced modulo $p$'' should be ``$p$
is balanced modulo $d/\gcd(o,d)$''.

The sixth displayed equation in Section 7.6 is missing several
powers of $\sigma$.  It should read
\begin{align*}
0=p^{ht(o)}\beta_k&=p^{e_{2\ell+2,2k}}\beta_k\\
&=\sigma^{e_{2k-1}+e_{2k}}p^{e_{2\ell+2,2k-2}}\beta_{k-1}\\
&\ \ \vdots\\
&=\sigma^{e_{2\ell+3}+\cdots+e_{2k}}p^{e_{2\ell+2}}\beta_{\ell+1}.
\end{align*}
The key point, namely that $p^{e_{2\ell+2}}\beta_{\ell+1}=0$, is
unchanged.

\end{document}